\documentclass[bibother]{asl}

\usepackage{verbatim}
\usepackage{tikz}
\usetikzlibrary{trees}
\usepackage{cite}

\theoremstyle{plain}
\newtheorem{theorem}{Theorem}[section]
\newtheorem{lemma}[theorem]{Lemma}

\newtheorem{cor}[theorem]{Corollary}

\theoremstyle{plain}
\newtheorem*{Thm:FMforQO}{Theorem \ref{thm:FMforQO}}
\newtheorem*{Thm:qltomegaUniversal}{Theorem \ref{thm:qltomegaUniversal}}
\newtheorem*{Thm:qlttreeUniversal}{Theorem \ref{thm:TreesUniversal}}
\newtheorem*{Thm:EmGroupsUniversal}{Theorem \ref{thm:EmGroupsUniversal}}
\newtheorem*{Thm:FGGroupEmbedUniversal}{Theorem \ref{thm:FGGroupEmbedUniversal}}

\theoremstyle{definition}
\newtheorem{definition}[theorem]{Definition}
\newtheorem*{problem}{Open Problem}

\theoremstyle{remark}
\newtheorem{remark}[theorem]{Remark}

\numberwithin{equation}{section}

\newcommand{\F}{\mathbb{F}}
\newcommand{\N}{\mathbb{N}}

\newcommand{\Z}{\mathbb{Z}}

\newcommand{\mc}{\mathcal}
\newcommand{\from}{\colon}  
\newcommand{\into}{\hookrightarrow}
\newcommand{\Iff}{\Leftrightarrow}
\newcommand{\<}{\langle}
\renewcommand{\>}{\rangle}
\newcommand{\qlt}{\preccurlyeq}

\newcommand{\Sg}{\operatorname{Sg}}
\newcommand{\LongIff}{\;\;\Longleftrightarrow \;\;}

\title{Universal Countable Borel Quasi-Orders}
\author{Jay Williams}
\revauthor{Williams, Jay}
\address{Department of Mathematics\\ California Institute of Technology\\ Pasadena, CA 91125}
\email{jaywill@caltech.edu}
\date{}

\begin{document}

\begin{abstract}

In recent years, much work in descriptive set theory has been focused on the Borel complexity of naturally occurring classification problems, in particular, the study of countable Borel equivalence relations and their structure under the quasi-order of Borel reducibility.  Following the approach of Louveau and Rosendal for the study of analytic equivalence relations, we study countable Borel quasi-orders.

In this paper we are concerned with universal countable Borel quasi-orders, i.e.\ countable Borel quasi-orders above all other countable Borel quasi-orders with regard to Borel reducibility.  We first establish that there is a universal countable Borel quasi-order, and then establish that several countable Borel quasi-orders are universal.  An important example is an embeddability relation on descriptive set theoretic trees.

Our main result states that embeddability of finitely generated groups is a universal countable Borel quasi-order, answering a question of Louveau and Rosendal.  This immediately implies that biembeddability of finitely generated groups is a universal countable Borel equivalence relation.  The same techniques are also used to show that embeddability of countable groups is a universal analytic quasi-order.

Finally, we show that, up to Borel bireducibility, there are $2^{\aleph_0}$ distinct countable Borel quasi-orders which symmetrize to a universal countable Borel equivalence relation.

\end{abstract}

\maketitle

\section{Introduction}

A countable Borel quasi-order $Q$ is a quasi-order defined on a Polish space $X$ (or more generally, a standard Borel space) such that $Q$ is Borel when viewed as a subset of $X^2$ and for every $x\in X$, the set of predecessors of $x$, $\{y \mid y \mathbin{Q} x\}$, is countable.  There are several natural examples, such as $\leq_T$ and $\leq_1$ on $2^\N$, as well as the embeddability relation on the space $\mc G$ of finitely generated groups.

As in the case of Borel equivalence relations, we are largely interested in how countable Borel quasi-orders are related to each other under Borel reducibility.

\begin{definition}\hfill

a)  Suppose that $Q$ is a Borel quasi-order on a Polish space $X$ and $Q'$ is a Borel quasi-order on a Polish space $Y$.  We say that $Q$ is Borel reducible to $Q'$, written $Q \leq_B Q'$, if there is a Borel function $f\from X\to Y$ such that
$$x \mathbin{Q} y \Iff f(x) \mathbin{Q'} f(y)$$

b)  A countable Borel quasi-order $Q'$ is universal if for every countable Borel quasi-order $Q$, $Q  \leq_B Q'$.
\end{definition}

Borel reducibility is intended to capture the notion of the relative complexity of quasi-orders, so that if $Q\leq_B Q'$ then we consider $Q'$ to be more complicated than $Q$.  A universal countable Borel quasi-order can then be thought of as a countable Borel quasi-order which is as complicated as possible.  It was shown by Dougherty, Jackson, and Kechris in \cite{DJK94} that there is a universal countable Borel equivalence relation, and in \cite{LR05} Louveau and Rosendal showed there is a universal analytic quasi-order.  In section \ref{sec:UniversalExist}, using an analogue of the Feldman-Moore Theorem, we prove the following result.

\begin{Thm:qltomegaUniversal}
There is a universal countable Borel quasi-order.
\end{Thm:qltomegaUniversal}

Given a quasi-order $Q$ on $X$, the corresponding equivalence relation $E_Q$ on $X$ is defined by
$$ x E_Q y \Iff x Q y \, \wedge \, y Q x $$
For example, Turing equivalence $\equiv_T$ is $E_{\leq_T}$.  It is easily checked that if $Q$ is a universal countable Borel quasi-order, then $E_Q$ is a universal countable Borel equivalence relation, and this provides another source of examples for such equivalence relations.

Much of this paper is dedicated to proving that various countable Borel quasi-orders are universal.  One universal countable Borel quasi-order in particular is used in several of these proofs.  Recall that given a discrete space $X$, a subset $T\subseteq X^{<\N}$ is said to be a tree on $X$ if it is closed under initial segments.

\begin{definition}\label{def:treeqo}
Given a discrete space $X$, the quasi-order $\qlt_X^{tree}$ on the space of trees on $X$ is defined by
$$ T\qlt_X^{tree} T' \Iff (\exists u\in X^{<\N})\hspace{4pt} T=T'_u $$
where $T'_u=\{v\in X^{<\N} \mid u^\frown v\in T'\}$.  (Here $u^\frown v$ indicates the string $u$ followed by $v$.)
\end{definition}

\begin{Thm:qlttreeUniversal}
$\qlt_2^{tree}$ is a universal countable Borel quasi-order.
\end{Thm:qlttreeUniversal}

This quasi-order is combinatorially simple and thus easy to work with.  This makes it useful for establishing other quasi-orders are universal.

Our ultimate goal is to show the biembeddability relation for finitely generated groups is a universal countable Borel equivalence relation.  Although it may be possible to prove this only using results on countable Borel equivalence relations, the use of quasi-orders seems to be the most direct route to this result.  We first use small cancellation methods from group theory to establish the following theorem.

\begin{Thm:EmGroupsUniversal}
Embeddability of countable groups is a universal analytic quasi-order.
\end{Thm:EmGroupsUniversal}

The ideas developed in this proof are then used to reduce $\qlt_2^{tree}$ to the embeddability relation for finitely generated groups.  Thus we have the following theorem.

\begin{Thm:FGGroupEmbedUniversal}
Embeddability of finitely-generated groups is a universal countable Borel quasi-order.
\end{Thm:FGGroupEmbedUniversal}

Thus the embeddability structure of finitely-generated groups is a complicated as possible.  We find as a corollary that the bi-embeddability relation for finitely-generated groups is a universal countable Borel equivalence relation.  This answers a question of Louveau and Rosendal in \cite{LR05}.  It was previously shown in \cite{TV99} that the isomorphism relation for finitely-generated groups is a universal countable Borel equivalence relation, and so this result can be seen as saying the two relations have precisely the same complexity.

The rest of the paper is organized as follows.  In section \ref{sec:UniversalExist}, we establish that there is  a universal countable Borel quasi-order by proving a Feldman-Moore-type result relating countable Borel quasi-orders to Borel actions of countable monoids.  In section \ref{sec:TreesQO}, we show that $\qlt_2^{tree}$ is universal.  In section \ref{sec:GroupUniversal}, we show that various group-theoretic countable Borel quasi-orders, which arise as simple generalizations of well-known universal countable Borel equivalence relations, are universal.  In sections \ref{sec:CtblGroupEmbed} and \ref{sec:FGGroupEmbed}, we prove the theorems on the embeddability relations for countable groups and finitely generated groups.  Finally, in section \ref{sec:Final} we show that, up to Borel bireducibility, there are $2^{\aleph_0}$ countable Borel quasi-orders which give rise to a universal countable Borel equivalence relation.

\section{A universal countable Borel quasi-order}\label{sec:UniversalExist}

We start by proving an analogue of the Feldman-Moore Theorem \cite{FM77} for countable Borel quasi-orders.  Although to the author's knowledge, this result is not in the literature, it is a straightforward application of the well-known Lusin-Novikov theorem (see Theorem 18.10 in Kechris \cite{K95}), and should perhaps be considered as folklore.

\begin{theorem}\label{thm:FMforQO}
If $\qlt$ is a countable Borel quasi-order on the Polish space $X$, there is a monoid $M$ which acts on $X$ in a Borel way such that
$$x\qlt y \LongIff (\exists m\in M) \hspace{4pt} x = m\cdot y. $$
\end{theorem}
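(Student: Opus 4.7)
The plan is to imitate the classical Feldman--Moore argument, replacing a countable group of Borel automorphisms by a countable monoid of Borel self-maps. The essential difference is that in a quasi-order, if $x \qlt y$ there is no obvious ``inverse'' taking $x$ back to $y$, so we cannot hope to get bijections, only functions; hence only a monoid action.

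First I would apply the Lusin--Novikov uniformization theorem to the Borel set $R = \{(y,x) \in X^2 : x \qlt y\}$. Since for each $y$ the vertical section $\{x : x \qlt y\}$ is countable by hypothesis, Lusin--Novikov provides countably many Borel partial functions $f_n \from X \to X$ whose graphs partition (or at least cover) $R$. In particular, $f_n(y) \qlt y$ whenever $f_n(y)$ is defined, and every $x \qlt y$ is of the form $f_n(y)$ for some $n$.

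Next I would extend each $f_n$ to a total Borel function by setting $f_n(y) = y$ off its original Borel domain. Reflexivity of $\qlt$ ensures $f_n(y) \qlt y$ continues to hold for every $y$ after this extension. Now let $M$ be the free monoid on countably many generators $\{a_n : n \in \N\}$ together with its identity $e$, and define a map $M \times X \to X$ by $e \cdot y = y$ and, inductively on word length, $(a_n w)\cdot y = f_n(w \cdot y)$. This is an action because concatenation of words corresponds to composition of functions, and it is Borel because $M$ is a countable discrete set and each word acts as a finite composition of Borel functions.

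The last step is to verify the required equivalence. If $x = m \cdot y$, then a straightforward induction on the length of $m$, together with transitivity of $\qlt$ and the fact that each generator $a_n$ acts as a function satisfying $a_n \cdot z \qlt z$, gives $x \qlt y$. Conversely, if $x \qlt y$ then by the Lusin--Novikov decomposition $x = f_n(y) = a_n \cdot y$ for some $n$ (or $x = y = e \cdot y$ when $x = y$). I do not foresee any real obstacle; the only point worth emphasizing is the conceptual one that antisymmetry is not available, so bijections are replaced by arbitrary Borel functions and the group by a monoid, but this requires no additional machinery beyond Lusin--Novikov.
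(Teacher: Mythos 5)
Your proof is correct and follows essentially the same route as the paper: apply Lusin--Novikov to the countable sections of $\succcurlyeq$, extend the resulting partial Borel functions to total ones using reflexivity, and let compositions of these functions give the monoid action, with transitivity of $\qlt$ ensuring $m\cdot x\qlt x$. The only cosmetic difference is that you take $M$ to be the free monoid on the index set acting via the $f_n$, while the paper takes $M$ to be the submonoid of Borel self-maps generated by the $f_n$ under composition; these give the same induced quasi-order.
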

\begin{proof}
First, note that by definition for all $y\in X$, $\qlt_y = \{ x \mid x\qlt y\}$ is countable, which implies the set $\qlt\subseteq X\times X$ has countable sections with respect to its second coordinate.  By the Lusin-Novikov theorem, $\succcurlyeq\,= \cup_n f_n$, where each $f_n\from E_n\to X$ is a Borel function, with $E_n\subseteq X$ Borel.

We can extend these to functions defined on all of $X$ by letting $f_n(y)=y$ for $y\in\nolinebreak X\setminus\nolinebreak E_n$.  These functions are still Borel, and their union is still equal to $\succcurlyeq$ by reflexivity.  We may also add the identity function to our collection without changing the union, again by reflexivity.  With all this in place, the $f_n$ generate a monoid $M$ under composition, and $M$ acts on $X$ by $m\cdot x=m(x)$.  If $x\qlt y$ then there exists $m\in M$ such that $x=m\cdot y=m(y)$, and the transitivity of $\qlt$ ensures that for all $m\in M$ and $x\in X$, $m\cdot x\qlt x$.
\end{proof}

We wish to use this result to show that there is a universal countable Borel quasi-order.  Our approach closely follows the proof of Dougherty, Jackson, and Kechris in \cite{DJK94} that there is a universal countable Borel equivalence relation.

\begin{definition}\label{def:qlt_M^X}
For every standard Borel space $X$ and countable monoid $M$, the corresponding canonical Borel action of $M$ on $X^M$ is defined by $(m\cdot f)(s)=f(sm)$ for $m,s\in M$ and $f\in X^M$.  We denote the corresponding quasi-order by $\qlt_M^X$, i.e.\ for $f,g\in X^M$,
$$f\qlt_M^X g \LongIff (\exists m\in M)\hspace{4pt} f=m\cdot g.$$
\end{definition}

To see that this is an action, let $m,n\in M$ and $f\in X^M$.  Then
\begin{align*}
(m\cdot(n\cdot f))(s) &= (n\cdot f)(sm) \\
 &= f(smn) \\
 &= (mn\cdot f)(s)
\end{align*}
as desired.

Note that in the above definition, if $M$ is a group, then $\qlt_M^X$ is in fact an equivalence relation.  In particular, $\qlt_{\F_2}^2$ is the universal countable Borel equivalence relation $E_\infty$.

\begin{definition}[The quasi-order $\qlt_\omega$]\label{def:qlt_omega}
Let $M_\omega$ be the free monoid on countably many generators.  Then define $\qlt_\omega$ to be $\qlt_{M_\omega}^{2^\N}$.
\end{definition}

\begin{theorem}\label{thm:qltomegaUniversal}
$\qlt_\omega$ is a universal countable Borel quasi-order.
\end{theorem}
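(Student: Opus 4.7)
The plan is to follow the Dougherty--Jackson--Kechris strategy from \cite{DJK94} that shows $E_{\F_2}^2$ is a universal countable Borel equivalence relation, adapted to the present monoid setting. Given an arbitrary countable Borel quasi-order $\qlt$ on a standard Borel space $X$, I would first apply Theorem \ref{thm:FMforQO} to obtain a countable monoid $M$ acting on $X$ in a Borel way so that $x \qlt y \LongIff (\exists m \in M)\, x = m \cdot y$. Since $M$ is countable, the universal property of the free monoid yields a surjective homomorphism $\pi \from M_\omega \to M$, and setting $w \cdot x = \pi(w) \cdot x$ turns this into a Borel action of $M_\omega$ on $X$ inducing the same quasi-order. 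I may further assume $X \subseteq 2^\N$, since every standard Borel space Borel embeds into $2^\N$.

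The proposed reduction is the map $f \from X \to (2^\N)^{M_\omega}$ given by $f(x)(m) = m \cdot x$. This is Borel since the action is Borel and $M_\omega$ is countable. The key computation is
\[
(m \cdot f(x))(s) = f(x)(sm) = (sm) \cdot x = s \cdot (m \cdot x) = f(m \cdot x)(s),
\]
so $m \cdot f(x) = f(m \cdot x)$. Hence if $x \qlt y$, say $x = m \cdot y$, then $f(x) = m \cdot f(y)$, so $f(x) \qlt_\omega f(y)$. Conversely, if $f(x) = m \cdot f(y) = f(m \cdot y)$ for some $m \in M_\omega$, then evaluating at the identity $1 \in M_\omega$ (the empty word) yields $x = f(x)(1) = m \cdot y$, so $x \qlt y$.

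Since this argument mirrors the equivalence-relation case quite closely, there is no serious obstacle; the main thing to verify is that nowhere does the argument invoke inverses. The one place where the group case might have needed an inverse is in recovering $x$ from the equation $f(x) = m \cdot f(y)$, but here evaluating at the identity of $M_\omega$ suffices, and the identity is of course available in any monoid. The asymmetric convention $(m \cdot f)(s) = f(sm)$ baked into Definition \ref{def:qlt_M^X} is precisely what makes the equivariance identity $f(m \cdot x) = m \cdot f(x)$ hold for a left action, after which the biconditional unwinds immediately.
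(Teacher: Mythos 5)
Your proof is correct and follows essentially the same route as the paper: Feldman--Moore analogue to get a Borel monoid action, pull back along a surjection $M_\omega\twoheadrightarrow M$, then embed via the orbit map into $(2^\N)^{M_\omega}$ with the shift action. The paper packages the Borel embedding $X\into 2^\N$ as a separating family of Borel sets $\{U_i\}$ and invokes injectivity for the reverse implication, whereas you fix the embedding up front and recover $x=m\cdot y$ by evaluating at the identity of $M_\omega$; these are cosmetic variants of the same construction.
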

\begin{proof}
Let $\qlt$ be a countable Borel quasi-order on a Polish space $X$.  By Theorem \ref{thm:FMforQO}, there is a countable monoid $M$ such that $\qlt$ is the quasi-order induced by a Borel action of $M$ on $X$.  Let $f\from M_\omega\to M$ be a surjective homomorphism.  Then we can define an action of $M_\omega$ on $X$ by
$$m\cdot x = f(m)\cdot x. $$
This action is Borel and also induces $\qlt$, and so without loss of generality we may assume that $M=M_\omega$.

Let $\{U_i\}_{i\in\N}$ be a sequence of Borel sets in $X$ which separates points.  Then we define $\phi\from X\to\nolinebreak (2^\N)^{M_\omega}$ by $x\mapsto \phi_x$, with
$$\phi_x(s)(i)=1 \LongIff s\cdot x\in U_i. $$
This map is Borel, and since the $U_i$ separate points, we see it is injective.  Furthermore, if $t\in M_\omega$, then $t\cdot \phi_x = \phi_{t\cdot x}$.  To see this, let $s\in M_\omega$, and $i\in \N$.  Then
\begin{align*}
\phi_{t\cdot x}(s)(i)=1 &\LongIff s\cdot t\cdot x\in U_i \\
 &\LongIff \phi_x(st)(i)=1 \\
 &\LongIff t\cdot \phi_x(s)(i)=1
\end{align*}

Now suppose that $x\qlt y$.  Then there exists $m\in M_\omega$ such that $x = m\cdot y$.  It follows that $\phi_x=\phi_{m\cdot y}=m\cdot \phi_y$, and so $\phi_x\qlt_\omega \phi_y$.  The same reasoning works in reverse, and hence $\phi_x\qlt_\omega \phi_y$ implies that $x\qlt y$.  Thus $\phi$ is a Borel reduction.
\end{proof}

Thus there exists a universal countable Borel quasi-order.  Next we wish to find universal countable Borel quasi-orders which are easier to work with.  We proceed by a series of easily proven lemmas which are the analogues of propositions 1.4-1.8 in Dougherty, Jackson, and Kechris \cite{DJK94}.  The proofs of most of them are virtually the same, and so we omit them here.

\begin{lemma}\label{prop:HomImage}
If $M,N$ are monoids and $M$ is a homomorphic image of $N$, then $\qlt_M^X\leq_B\,\qlt_N^X$.
\end{lemma}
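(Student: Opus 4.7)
The plan is to define a Borel reduction $\Phi\from X^M \to X^N$ by precomposition with a surjective homomorphism $\pi\from N \to M$, and then to verify the reduction condition using the surjectivity of $\pi$ in two places.

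More precisely, fix a surjective monoid homomorphism $\pi\from N \to M$, and define $\Phi(f) = f\circ \pi$, i.e.\ $\Phi(f)(n) = f(\pi(n))$ for all $n \in N$. This map is manifestly Borel, since for each $n\in N$ the coordinate projection $\Phi(f)(n) = f(\pi(n))$ is just the $\pi(n)$-coordinate of $f$. For the forward direction, suppose $f \qlt_M^X g$, so that $f = m \cdot g$ for some $m \in M$. Pick any $n \in N$ with $\pi(n) = m$. Then for every $s \in N$,
$$ (n \cdot \Phi(g))(s) = \Phi(g)(sn) = g(\pi(s)\pi(n)) = g(\pi(s) m) = (m\cdot g)(\pi(s)) = f(\pi(s)) = \Phi(f)(s), $$
so $\Phi(f) = n \cdot \Phi(g)$ and hence $\Phi(f) \qlt_N^X \Phi(g)$.

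For the reverse direction, suppose $\Phi(f) = n \cdot \Phi(g)$ for some $n \in N$. Unfolding the definitions, $f(\pi(s)) = g(\pi(s)\pi(n))$ for every $s \in N$. This is where I would use surjectivity of $\pi$ a second time: given any $t \in M$, choose $s \in N$ with $\pi(s) = t$ to conclude $f(t) = g(t \cdot \pi(n)) = (\pi(n) \cdot g)(t)$. Hence $f = \pi(n) \cdot g$, which gives $f \qlt_M^X g$.

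There is essentially no obstacle here — the argument is a direct computation. The only subtlety worth flagging is making sure to invoke surjectivity in both directions (first to lift $m$ to some $n$, then to conclude the action equality holds on all of $M$, not just on the image $\pi(N)$, although in this case $\pi(N) = M$). The proof mirrors the analogous result for countable Borel equivalence relations in \cite{DJK94}, as anticipated in the remark preceding the lemma.
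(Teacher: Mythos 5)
Your proof is correct and is exactly the standard argument intended here: the paper omits the proof, referring to Proposition 1.4 of \cite{DJK94}, and precomposition with a surjective homomorphism $\pi\from N\to M$ is precisely that argument, transported from groups to monoids. Both uses of surjectivity are correctly identified, and the computation in each direction is right.
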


\begin{lemma}\label{prop:2to3}
For any countable monoid $M$, $\qlt_M^{2^{\Z-\{0\}}}\leq_B\, \qlt_{M\times\Z}^3$.
\end{lemma}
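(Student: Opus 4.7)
The plan is to mimic the classic third-symbol coding trick used by Dougherty--Jackson--Kechris. I would define a Borel map $\phi\from (2^{\Z-\{0\}})^M \to 3^{M\times\Z}$ by
\begin{align*}
\phi(f)(s,0) &= 2, \\
\phi(f)(s,k) &= f(s)(k) \quad \text{for } k\in \Z-\{0\},
\end{align*}
so that the value $2$ acts as a positional marker identifying the ``origin'' in the $\Z$-direction, while the other slices record $f(s)$. The map is evidently Borel, since each coordinate of $\phi(f)$ depends continuously on $f$.

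For the forward implication, unwinding Definition \ref{def:qlt_M^X} for the monoid $M\times\Z$ gives $((m,n)\cdot h)(s,k) = h(sm,k+n)$. So if $f = m\cdot f'$ in the $M$-action, meaning $f(s) = f'(sm)$ for all $s\in M$, then a direct check in both cases $k=0$ and $k\neq 0$ shows $\phi(f) = (m,0)\cdot \phi(f')$. For the reverse implication, suppose $\phi(f) = (m,n)\cdot\phi(f')$, i.e.\ $\phi(f)(s,k) = \phi(f')(sm,k+n)$ for all $(s,k)\in M\times\Z$. Setting $k=0$ yields $2 = \phi(f)(s,0) = \phi(f')(sm,n)$; but $\phi(f')$ only takes the value $2$ on pairs whose second coordinate is $0$, so $n = 0$. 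With $n=0$ in hand, the equation restricted to $k\in\Z-\{0\}$ becomes $f(s)(k) = f'(sm)(k)$, whence $f(s) = f'(sm)$ for every $s$ and therefore $f = m\cdot f'$.

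The only step requiring any thought is the reverse direction, where one must rule out a non-trivial $\Z$-shift $n$; this is precisely what reserving the third symbol for the ``row'' $k = 0$ accomplishes, since the set of coordinates where $\phi(f')$ equals $2$ is a translation-invariant marker of the origin in $\Z$. Everything else is a routine verification.
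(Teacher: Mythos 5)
Your proof is correct and uses exactly the standard ``third-symbol marker'' coding that the paper implicitly invokes by citing Propositions 1.4--1.8 of Dougherty--Jackson--Kechris (the paper omits the argument as routine). Both directions check out: the forward direction with the witness $(m,0)$, and the reverse direction correctly uses the fact that $\phi(f')$ takes the value $2$ exactly on the slice $k=0$ to force $n=0$.
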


\begin{lemma}\label{prop:3to2}
For any countable monoid $M$, $\qlt_M^3\leq_B\qlt_{M\times\Z_2}^2$.
\end{lemma}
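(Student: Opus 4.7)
The plan is to construct a Borel reduction $\phi \from 3^M \to 2^{M \times \Z_2}$ using a thermometer-style encoding: for $f \in 3^M$, $s \in M$, and $w \in \Z_2 = \{0, 1\}$, set
\[ \phi(f)(s, w) = 1 \Iff f(s) > w, \]
so that the values $0, 1, 2$ of $f(s)$ are encoded at positions $(s, 0), (s, 1)$ by the bit pairs $(0, 0), (1, 0), (1, 1)$ respectively. The map $\phi$ is clearly Borel and injective, so the core task is to verify that it preserves and reflects the quasi-order.

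For the forward direction, if $f = m \cdot f'$ for some $m \in M$, then a direct computation using the identity element of $\Z_2$ shows $\phi(f) = (m, 0) \cdot \phi(f')$, and thus $\phi(f) \qlt_{M \times \Z_2}^2 \phi(f')$. Conversely, suppose $\phi(f) = (m, z) \cdot \phi(f')$ for some $(m, z) \in M \times \Z_2$, so that $\phi(f)(s, w) = \phi(f')(sm, w + z)$ for all $s \in M$ and $w \in \Z_2$. The case $z = 0$ is immediate: $[f(s) > w] = [f'(sm) > w]$ for each $w \in \{0, 1\}$ gives $f(s) = f'(sm)$ and hence $f = m \cdot f'$.

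The main obstacle is the case $z = 1$, in which the $\Z_2$-shift swaps the two bits of the encoding and could in principle yield a spurious reduction not coming from the $M$-action alone. In this case the conditions unpack to $[f(s) \geq 1] = [f'(sm) = 2]$ and $[f(s) = 2] = [f'(sm) \geq 1]$. I would first observe these force both $f$ and the function $s \mapsto f'(sm)$ to avoid the value $1$: if $f(s) = 1$ then the first equation gives $f'(sm) = 2$, and the second then forces $f(s) = 2$, a contradiction; a symmetric argument rules out $f'(sm) = 1$. Once each takes values only in $\{0, 2\}$, the two conditions collapse to $f(s) = f'(sm)$, so again $f = m \cdot f'$. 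Thus in either case $f \qlt_M^3 f'$, completing the reduction.
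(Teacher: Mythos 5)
Your proof is correct and uses exactly the standard thermometer coding $\phi(f)(s,w)=1 \iff f(s)>w$ from Dougherty--Jackson--Kechris, to which the paper defers (the proof of this lemma is omitted in the paper). Your case analysis for $z=1$ correctly handles the additional care needed when passing from the group/equivalence-relation setting of \cite{DJK94} to the monoid/quasi-order setting here, so nothing is missing.
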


\begin{lemma}\label{omegato2}
Let $M_2$ denote the free monoid on 2 generators.  Then $$\qlt_{M_\omega}^2\, \leq_B \, \qlt_{M_2}^2.$$
\end{lemma}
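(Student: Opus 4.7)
The plan is to construct a Borel reduction $\phi \from 2^{M_\omega} \to 2^{M_2}$ using an explicit embedding $\iota \from M_\omega \hookrightarrow M_2$ of $M_\omega$ as a carefully chosen free submonoid of $M_2$, following the spirit of the corresponding lemma in \cite{DJK94} but adding a twist to compensate for the absence of inverses in a monoid. Let $x_1, x_2, \dots$ denote the generators of $M_\omega$ and $a, b$ those of $M_2$. I would set $\iota(x_n) = b a^n b$ for each $n \geq 1$. Because $\{b a^n b : n \geq 1\}$ is a bifix code (no element is a prefix or a suffix of another), $\iota$ is injective and $\iota(M_\omega)$ is a free submonoid of $M_2$ in which every element factors uniquely as a product of $\iota(x_n)$'s. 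From this unique factorization I would deduce two structural facts: (A) $w, wv \in \iota(M_\omega) \Rightarrow v \in \iota(M_\omega)$, and (B) $v \neq \epsilon, v, wv \in \iota(M_\omega) \Rightarrow w \in \iota(M_\omega)$; the suffix-incomparability of the generators is essential for (B).

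Let $\pi \from M_2 \to M_\omega$ be the map sending $w$ to the unique maximal $m$ with $w = w_0 \iota(m)$ for some $w_0 \in M_2$; fact (A) combined with unique factorization makes this well-defined and gives the equivariance $\pi(w \iota(m)) = \pi(w) m$. Let $\chi \from M_2 \to \{0,1\}$ be the indicator of $M_2 \setminus \iota(M_\omega)$; facts (A) and (B) together give $\chi(w\iota(m)) = \chi(w)$. Define
\[
\phi(f)(w) = f(\pi(w)) \oplus \chi(w),
\]
where $\oplus$ denotes addition modulo $2$. Borelness is immediate since everything is defined on a countable discrete space. A direct computation from the two equivariance properties shows $\phi(m \cdot f) = \iota(m) \cdot \phi(f)$, which gives the forward direction $f \qlt_{M_\omega}^2 g \Rightarrow \phi(f) \qlt_{M_2}^2 \phi(g)$ (with witness $\iota(m)$).

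The hard part, and the reason for introducing $\chi$, is the converse. Suppose $\phi(f) = v \cdot \phi(g)$ in $2^{M_2}$. If $v = \iota(m) \in \iota(M_\omega)$, unraveling the equation at $w = \iota(s)$ gives $f(s) = g(sm)$, so $f = m \cdot g$ and we are done. Without the twist, collapsing shifts such as $v = a$ would force spurious witnesses, so I would rule out the case $v \notin \iota(M_\omega)$ by deriving a contradiction from two evaluations. At $w = \iota(s)$, fact (B) forces $\iota(s) v \notin \iota(M_\omega)$, so $\chi(w) \oplus \chi(wv) = 1$ and the equation reads $f(s) = g(\pi(\iota(s) v)) \oplus 1$. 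At $w = a\iota(s)$, both $a\iota(s)$ and $a\iota(s) v$ begin with $a$ and so lie outside $\iota(M_\omega)$, giving $\chi(w) \oplus \chi(wv) = 0$ and $f(s) = g(\pi(a\iota(s) v))$. But no non-trivial $\iota$-suffix of $a\iota(s) v$ can be longer than $|\iota(s) v|$, since such a suffix would have to begin with $a$ while every non-trivial element of $\iota(M_\omega)$ begins with $b$; hence $\pi(a\iota(s) v) = \pi(\iota(s) v)$. Subtracting the two equations then forces $0 = 1$, eliminating the case $v \notin \iota(M_\omega)$ and completing the proof.
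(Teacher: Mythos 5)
Your proof is correct. It pursues the same general strategy as the paper's — fix an explicit embedding $\iota$ of $M_\omega$ as a free submonoid of $M_2$ and extend each $p\in 2^{M_\omega}$ to a function on $M_2$ that is equivariant for shifts by $\iota(M_\omega)$ while being sensitive enough to detect shifts by elements of $M_2\setminus\iota(M_\omega)$ — but the two constructions differ in all their details. The paper uses the suffix code $x_n\mapsto ab^n$, which gives only right unitarity (your fact (B)), and defines $p^*(h)$ outside $M_\omega$ to depend only on the ``excess length'' $L(h)$ (it is $1$ if $L(h)=1$ and $0$ if $L(h)>1$, ignoring $p$ entirely); the converse then falls out of a single evaluation at $h=b$. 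You instead use the bifix code $x_n\mapsto ba^nb$, which buys both unitarity properties (A) and (B) — and you genuinely use both, (B) for the equivariance of $\pi$ and $\chi$ and (A) to see $\iota(s)v\notin\iota(M_\omega)$ — and you keep $f(\pi(w))$ in the definition everywhere, adding a parity twist $\chi(w)$; the converse then needs two evaluations (at $\iota(s)$ and $a\iota(s)$) and a subtraction. Your version is a bit more symmetric and structural, the paper's a bit more economical. One small slip: in the converse you cite fact (B) to conclude $\iota(s)v\notin\iota(M_\omega)$, but the correct citation is fact (A), since here $\iota(s)$ is the prefix and $v$ the suffix — if $\iota(s),\iota(s)v\in\iota(M_\omega)$ then left unitarity would force $v\in\iota(M_\omega)$. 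This does not affect the correctness of the argument.
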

\begin{proof}
We start by embedding $M_\omega$ into $M_2$ in order to view it as a submonoid of $M_2$.  Let $M_\omega=\< x_1,x_2,\ldots \>$ and $M_2=\<a,b\>$.  We define our embedding by $e\mapsto e$ and $x_n\mapsto ab^n$ for all $n\in \N^+$.

Next we note that if $h\in M_2$, then we can canonically write $h$ as a product $h=h'g$, with $g\in M_\omega$ and $h'\in M_2\setminus M_\omega$, possibly with $g=e$ or $h'=e$, by finding the longest word in $M_\omega$ at the end of $h$.  Define $L\from M_2\to\N$ by
$$L(h)=\text{ the length of }h'$$
where $h=h'g$ is the canonical form of $h$.  This function has the desirable property that multiplying an element $h\in M_2$ on the right by an element $g\in M_\omega$ does not change the given length, i.e.\ $L(h)=L(hg)$.

Define $f\from 2^{M_\omega}\to 2^{M_2}$ by $p\mapsto p^*$ where
$$p^*(h)=\begin{cases}
p(h) & \text{if $L(h)=0$} \\
1 & \text{if $L(h)=1$} \\
0 & \text{if $L(h)>1$}
\end{cases}.$$
Suppose that $p\qlt_{M_\omega}^2\, q$.  Then $\exists g\in M_\omega$ such that $p=g\cdot q$.  So if $h\in M_\omega$,
\begin{align*}
(g\cdot q^*)(h) &= q^*(hg) \\
 &= q(hg) \\
 &=(g\cdot q)(h) \\
 &=p(h) \\
 &=p^*(h)
\end{align*}
If $h\in M_2\setminus M_\omega$, then since $L(h)=L(hg)$, we find
\begin{align*}
(g\cdot q^*)(h) &=q^*(hg) \\
 &=p^*(h)
\end{align*}
So $p^*\qlt_{M_2}^X\, q^*$.

Now suppose that $p^*\qlt_{M_2}^2\, q^*$.  Then there exists $g\in M_2$ such that $p^*=g\cdot q^*$.  Clearly if $g\in M_\omega$, then $p=g\cdot q$.  If instead $g\in M_2\setminus M_\omega$, then $L(g)\geq 1$, and we should have that $p^*(b)=(g\cdot q^*)(b)=q^*(bg)$.  But $p^*(b)=1$, while $L(bg)>1$, and so $q^*(bg)=0$.  Thus this case cannot happen, and hence $f$ is a Borel reduction.
\end{proof}

\begin{theorem}\label{thm:qltM2Universal}
$\qlt_\omega \, \leq_B \, \qlt_{M_2}^2$.  It follows that $\qlt_{M_2}^2$ is universal.
\end{theorem}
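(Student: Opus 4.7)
The plan is to chain the preceding lemmas, mirroring the template of Dougherty, Jackson, and Kechris \cite{DJK94}. Starting from the definition $\qlt_\omega = \qlt_{M_\omega}^{2^\N}$, I would first note that any bijection $\N \leftrightarrow \Z\setminus\{0\}$ induces a Borel isomorphism of $2^\N$ with $2^{\Z\setminus\{0\}}$ which commutes with the $M_\omega$-action, so $\qlt_\omega$ agrees (up to a harmless relabeling of coordinates) with $\qlt_{M_\omega}^{2^{\Z\setminus\{0\}}}$.

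Next I would apply Lemma \ref{prop:2to3} with $M=M_\omega$ and Lemma \ref{prop:3to2} with $M=M_\omega\times\Z$ to produce the chain
$$\qlt_{M_\omega}^{2^{\Z\setminus\{0\}}}\,\leq_B\,\qlt_{M_\omega\times\Z}^3\,\leq_B\,\qlt_{M_\omega\times\Z\times\Z_2}^2.$$
The monoid $M_\omega\times\Z\times\Z_2$ is countable, so mapping the free generators of $M_\omega$ onto any countable generating set of $M_\omega\times\Z\times\Z_2$ gives a surjective homomorphism from $M_\omega$ onto $M_\omega\times\Z\times\Z_2$. Lemma \ref{prop:HomImage} then yields $\qlt_{M_\omega\times\Z\times\Z_2}^2\,\leq_B\,\qlt_{M_\omega}^2$, and finally Lemma \ref{omegato2} provides $\qlt_{M_\omega}^2\,\leq_B\,\qlt_{M_2}^2$.

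Concatenating these reductions gives $\qlt_\omega\,\leq_B\,\qlt_{M_2}^2$. Since $\qlt_\omega$ is universal by Theorem \ref{thm:qltomegaUniversal} and $\qlt_{M_2}^2$ is itself a countable Borel quasi-order, universality of $\qlt_{M_2}^2$ follows immediately. The argument introduces no essentially new ideas beyond those of the lemmas; the only thing to be careful about is bookkeeping at each stage, in particular recognizing the countable monoid $M_\omega\times\Z\times\Z_2$ as a homomorphic image of $M_\omega$ and identifying the Polish space $2^\N$ with $2^{\Z\setminus\{0\}}$.
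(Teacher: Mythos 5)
Your proposal matches the paper's proof almost verbatim: identify $2^\N$ with $2^{\Z\setminus\{0\}}$, then chain Lemma \ref{prop:2to3}, Lemma \ref{prop:3to2}, Lemma \ref{prop:HomImage}, and Lemma \ref{omegato2} to land on $\qlt_{M_2}^2$. It is correct and takes essentially the same approach as the paper.
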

\begin{proof}
Using the preceding lemmas, we find that
\begin{align*}
\qlt_{M_\omega}^{2^\N} &\leq_B\; \qlt_{M_\omega}^{2^{\Z-\{0\}}} \\
 &\leq_B\; \qlt_{M_\omega\times\Z}^3 \text{ by Prop. \ref{prop:2to3}}\\
 &\leq_B\; \qlt_{M_\omega\times\Z\times\Z_2}^2 \text{ by Prop. \ref{prop:3to2}}\\
 &\leq_B\; \qlt_{M_\omega}^2 \text{ by Prop. \ref{prop:HomImage}}\\
 &\leq_B\; \qlt_{M_2}^2 \text{ by Prop. \ref{omegato2}}  
\end{align*}
\end{proof}

The quasi-order $\qlt_{M_2}^2$ is easier to work with than $\qlt_\omega$, as both the monoid and the space being acted on are simpler.  Using $\qlt_{M_2}^2$, we will find another universal countable Borel quasi-order, this one of a more combinatorial nature.

\section{A quasi-order on trees}\label{sec:TreesQO}

In this section, we will reduce $\qlt_{M_2}^2$ to $\qlt_2^{tree}$, the quasi-order on descriptive-set-theoretic trees from definition \ref{def:treeqo}.  This has the advantage of moving us away from working with monoids and towards more classical areas of mathematics.  We must first make a few intermediate reductions.

\begin{definition}
The quasi-order $\qlt_2^s$ (the $s$ is for ``suffix") on $\mc P(M_2)$ is defined by
$$A\qlt_2^s B \LongIff (\exists m\in M_2)\hspace{4pt} Am=B^m$$ where
$$B^m=B\cap M_2m. $$
\end{definition}

\begin{remark}
Note that if we made a similar definition for a group $M$, then we would always have that $B^m=B$, since in this case $Mm=M$.  So this definition is only interesting when dealing with a monoid.
\end{remark}

If we identify $\mc P(M_2)$ with $2^{M_2}$, then this quasi-order is the same as $\qlt_{M_2}^2$.  Writing it in this way brings out the fact that knowing a set $A\in\mc P(M_2)$ and that \mbox{$A\qlt_{M_2}^2 B$} only gives partial information about $B$.  This differs from $E_\infty$, the analogous equivalence relation, since knowing $A\in\mc P(\F_2)$ and that $A\mathbin{E_\infty} B$ gives information about all of $B$.

Next, we modify this quasi-order slightly, in order to make it somewhat easier to work with.

\begin{definition}
The quasi-order $\qlt_2^p$ (the $p$ is for ``prefix") on $\mc P(M_2)$ is defined by
$$A\qlt_2^p B \LongIff (\exists m\in M_2)\hspace{4pt} mA=B_m$$ where
$$B_m=B\cap mM_2. $$
\end{definition}

As before, this definition is only interesting when working with a monoid.

\begin{theorem}
$\qlt_2^s\,\sim_B\,\qlt_2^p$
\end{theorem}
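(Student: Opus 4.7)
The plan is to reduce both directions at once using the word-reversal anti-automorphism of $M_2$. I would define, for a word $w = s_1 s_2 \cdots s_n \in M_2$ (each $s_i \in \{a,b\}$), its reversal $w^R = s_n \cdots s_1$, and set $A^R = \{w^R : w \in A\}$ for $A \subseteq M_2$. The map $A \mapsto A^R$ is a Borel involution on $2^{M_2} \cong \mc P(M_2)$, since it acts coordinatewise via a fixed bijection of $M_2$ with itself. Thus, to establish $\qlt_2^s \sim_B \qlt_2^p$ it suffices to show $A \qlt_2^s B \Iff A^R \qlt_2^p B^R$; the map $A \mapsto A^R$ and its own inverse then serve as Borel reductions in both directions.

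The key computation is that reversal is an anti-homomorphism of $M_2$: $(uv)^R = v^R u^R$. Consequently, for any $m \in M_2$, I get $(Am)^R = m^R A^R$ and $(M_2 m)^R = m^R M_2$. So the defining equation $Am = B \cap M_2 m$ for $A \qlt_2^s B$ reverses to $m^R A^R = B^R \cap m^R M_2 = (B^R)_{m^R}$, which is exactly the defining equation for $A^R \qlt_2^p B^R$ witnessed by $m^R$. The converse direction is symmetric, using $(mA)^R = A^R m^R$ and $(mM_2)^R = M_2 m^R$ to take a witness $n$ of $A \qlt_2^p B$ to the witness $n^R$ of $A^R \qlt_2^s B^R$.

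I do not expect any serious obstacle here: the statement is essentially the formal observation that the ``prefix'' and ``suffix'' definitions are intertwined by an order-two anti-automorphism of the ambient monoid. The only things to verify are that word reversal is a well-defined Borel bijection on $\mc P(M_2)$, which is immediate, and that the translation between the two defining equations is exact on witnesses, which follows directly from the anti-homomorphism identity $(uv)^R = v^R u^R$.
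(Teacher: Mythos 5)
Your proof is the same as the paper's: the paper defines $\bar w$ by writing $w = a^{n_0}b^{m_0}\cdots a^{n_k}b^{m_k}$ and setting $\bar w = b^{m_k}a^{n_k}\cdots b^{m_0}a^{n_0}$, which is precisely word reversal written in block form, and the verification that the witnessing equations transform correctly is exactly your anti-homomorphism computation. Your phrasing in terms of the anti-automorphism $(uv)^R = v^R u^R$ is a cleaner way to present the identical argument.
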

\begin{proof}
Every nontrivial element $w\in M_2$ may be written as $w=a^{n_0}b^{m_0}\ldots a^{n_k}b^{m_k}$, where $n_i,m_j\in\N$, and only $n_0$ or $m_k$ may be 0.  Define $\bar w=b^{m_k}a^{n_k}\ldots b^{m_0}a^{n_0}$, and $\bar e=e$.  Then the bijection $f\from M_2\to M_2$ defined by $f(w)=\bar w$ induces a Borel bijection $f^*\from\mc P(M_2)\nolinebreak\to\nolinebreak\mc P(M_2)$ such that if $Am=B^m$, then $\bar m f^*(A)=f^*(B)_{\bar m}$.  Similarly, if $wf^*(A)=f^*(B)_w$, then $A\bar w=B^{\bar w}$.  Thus $f^*$ is a Borel reduction from $\qlt_2^s$ to $\qlt_2^p$.  Since $f^*$ is its own inverse, we see that it is also a Borel reduction from $\qlt_2^p$ to $\qlt_2^s$.
\end{proof}

One can view $M_2$ as the complete binary tree $2^{<\N}$, with each word in $M_2$ corresponding to a node in the tree.  From this point of view, when looking at $A\subseteq M_2$, we see that $A_m$ is simply the set of words in $A$ which are above the node corresponding to $m$.  (See figure \ref{fig:trees}.)  This natural interpretation of one of the sets involved in $\qlt_2^p$ in terms of trees leads us to consider the quasi-order $\qlt_X^{tree}$.  Recall that for a countable discrete space $X$, a tree on $X$ is a (non-empty) collection of finite sequences of elements of $X$ which is closed under initial segments.  Let $\Lambda(X)$ be the Borel set of infinite trees in $Tr(X)$, the Polish space of trees on $X$.

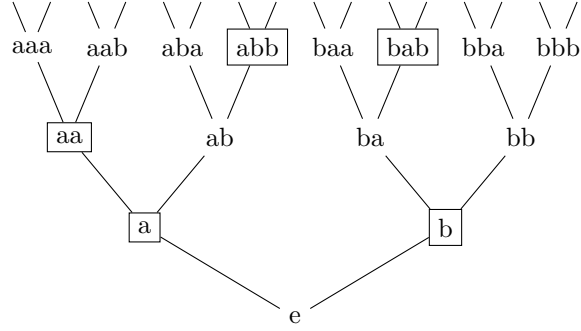
\begin{figure}
\begin{center}
\begin{tikzpicture}
[level distance=12 mm,
 level 1/.style={sibling distance=4cm},
 level 2/.style={sibling distance=2cm},
 level 3/.style={sibling distance=1cm},
 level 4/.style={sibling distance=5mm, level distance=6mm}]
\node{e}[grow'=up]
	child{node[rectangle,draw]{a}
		child{node[rectangle,draw]{aa}
			child{node{aaa} child child}
			child{node{aab} child child}
			}
		child{node{ab}
			child{node{aba} child child}
			child{node[rectangle,draw]{abb} child child}
			}
		}	
	child{node[rectangle,draw]{b}
		child{node{ba}
			child{node{baa} child child}
			child{node[rectangle,draw]{bab} child child}
			}
		child{node{bb}
			child{node{bba} child child}
			child{node{bbb} child child}
			}
		};	
\end{tikzpicture}
\parbox{5in}{\caption{The set $A=\{a,b,aa,abb,bab,\ldots\}$ in the binary tree corresponding to $M_2$.
 Note that, for example, $A_{ba}=\{bab,\ldots\}$ is the set of words in $A$ above $ba$.} \label{fig:trees} }
\end{center}
\end{figure}

Note that if we have $A,B\in \mc P(M_2)$ and $m\in M_2$ such that $mA=B_m$, and furthermore $A,B$ are both trees on $\{a,b\}$, then $m$ witnesses that $A\qlt_{\{a,b\}}^{tree} B$.  If $A$ or $B$ is not a tree, then it does not make sense to compare them using $\qlt_{\{a,b\}}^{tree}$, but this is only a minor difficulty, as we will see in the next proof.

\begin{theorem}\label{thm:TreesUniversal}
$\qlt_2^p \, \leq_B \, \qlt_2^{tree}\restriction\Lambda(2)$. It follows that $\qlt_2^{tree}$ is universal.
\end{theorem}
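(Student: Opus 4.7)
The plan is to construct an explicit Borel reduction $\phi : \mc P(M_2) \to \Lambda(2)$. The key observation is that $\qlt_2^p$ and $\qlt_2^{tree}$ already share the same shape, each being a ``shift'' relation: $A \qlt_2^p B$ asks for some $m$ with $mA = B_m$, while $T \qlt_2^{tree} T'$ asks for some $u$ with $T = T'_u$. It therefore suffices to encode each set $A \subseteq M_2$ as an infinite tree $\phi(A)$ on $\{a,b\}$ in such a way that the admissible tree shifts of $\phi(B)$ correspond bijectively to left multiplications on $B$ by elements of $M_2$.

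To this end, for $w = s_1 \cdots s_k \in M_2$ write $\tilde w := s_1 s_1 s_2 s_2 \cdots s_k s_k \in 2^{<\N}$, which has the homomorphism property $\widetilde{mw} = \tilde m \tilde w$. For $A \subseteq M_2$ define
\[ \phi(A) \;=\; \{\tilde w : w \in M_2\} \;\cup\; \{\tilde w s : w \in M_2,\ s \in \{a,b\}\} \;\cup\; \{\tilde w\,ab : w \in A\}. \]
The three families (the \emph{backbone}, the \emph{intermediates} that keep $\phi(A)$ closed under initial segments, and the \emph{markers} that encode $A$) are disjoint: backbones have even length with matching consecutive letter-pairs, intermediates have odd length, and markers have even length but end in the unmatched pair $ab$. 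One checks routinely that $\phi(A) \in \Lambda(2)$ and that $\phi$ is Borel, and the forward direction is immediate: if $mA = B_m$ then $\widetilde{mv} = \tilde m \tilde v$ gives $\phi(B)_{\tilde m} = \phi(A)$.

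The main obstacle is the backward direction: if $\phi(A) = \phi(B)_u$, we must force $u = \tilde m$ for some $m$. Since $\phi(A)$ is nonempty, $u$ lies in $\phi(B)$ and so is of the form $\tilde m$, $\tilde m\,s$, or $\tilde m\,ab$ (with $m \in B$). The backbone case $u = \tilde m$ directly yields $A = \{v : mv \in B\}$, as desired. The remaining cases are ruled out by exploiting the asymmetry in the encoding --- namely that $\tilde w\,ba$ is never in $\phi(B)$, while $aa$ and $bb$ (as $\tilde a, \tilde b$) always lie in $\phi(A')$ for any $A'$. A short check gives: if $u = \tilde m\,a$, then $\tilde m\,abb \notin \phi(B)$, so $bb \notin \phi(B)_u$; if $u = \tilde m\,b$, then $\tilde m\,ba \notin \phi(B)$ gives $a \notin \phi(B)_u$; and if $u = \tilde m\,ab$, then both $\tilde m\,aba$ and $\tilde m\,abb$ fail to lie in $\phi(B)$, so $a, b \notin \phi(B)_u$. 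Hence $u$ must be a backbone string $\tilde m$, and we are done; universality of $\qlt_2^{tree}$ then follows from $\qlt_2^p \sim_B \qlt_{M_2}^2$ together with Theorem \ref{thm:qltM2Universal}.
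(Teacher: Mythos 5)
Your proof is correct and packages the argument somewhat differently from the paper. The paper factors the reduction through $\Lambda(3)$: it first sends $A$ to the tree $T_A \in Tr(3)$ obtained from the full binary tree $2^{<\N}$ by adjoining marker nodes $\hat{w}^\frown 2$ for $w\in A$, and then applies the letter-substitution $c\colon 3^{<\N}\to 2^{<\N}$, $c(0)=00$, $c(1)=01$, $c(2)=10$, to land in $\Lambda(2)$. Your $\phi$ carries out both steps at once, with the encoding $a\mapsto aa$, $b\mapsto bb$ and marker pair $ab$. The two composed maps are combinatorially parallel --- each doubles every $M_2$-letter and reserves one distinguishable pair for markers --- but the paper's composite tags backbone positions with a leading $0$ in each doubled pair, while yours tags them by making the pair matched. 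Either convention yields the parity and disjointness facts that pin $u$ down to a backbone string in the backward direction, and your case analysis (ruling out $u = \tilde m\,a$, $\tilde m\,b$, $\tilde m\,ab$ by exhibiting an element of $\phi(A)$ missing from $\phi(B)_u$) goes through. The paper's two-step form keeps each verification short, but since the intermediate map $C$ is only shown to work on the image of $t$ rather than on all of $Tr(3)$, the detour buys no real extra generality, and your direct one-step construction is an equally valid and arguably more transparent presentation of the same core idea.
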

\begin{proof}
We will create our Borel reduction in two parts.  First we will define a Borel reduction from $\qlt_2^p$ to $\qlt_3^{tree}\restriction\Lambda(3)$.  Given $A\in\mc P(M_2)$, we define the tree $T_A\in Tr(3)$ as follows.  We start with the complete binary tree $2^{<\N}$, and add to it the sequence $\hat{w}^\frown 2$ iff $w\in A$, where $\hat{w}$ is the sequence in $2^{<\N}$ corresponding to the word $w$ in $M_2$.  This set is closed under initial segments and so is a tree.  Clearly it is infinite.  Let $T_A$ be this collection of sequences.

Suppose that $A\qlt_2^p B$.  Then there exists $m\in M_2$ such that $mA=B_m$.  First note that $2^{<\N}$ is contained in both $T_A$ and $(T_B)_{\hat{m}}$.  Next suppose that $w\in M_2$.  Then
\begin{align*}
\hat{w}^\frown 2\in T_A &\LongIff w\in A \\
 &\LongIff m^\frown w\in B \\
 &\LongIff \widehat{m^\frown w}^\frown 2=\hat{m}^\frown \hat{w}^\frown 2\in T_B \\
 &\LongIff \hat{w}^\frown 2\in (T_B)_{\hat{m}}
\end{align*}
So $T_A=(T_B)_{\hat{m}}$.

Conversely, suppose that $T_A=(T_B)_\alpha$ for some $\alpha\in 3^{<\omega}$.  If $\alpha$ contains a $2$, then $(T_B)_\alpha$ is $\{\emptyset\}$ or $\emptyset$, since the only sequences in $T_B$ containing $2$ are leaves of the tree.  However, $T_A$ is infinite.  So $\alpha\in 2^{<\omega}$, which means that there is a word $w\in M_2$ such that $\hat{w}=\alpha$.  Now
\begin{align*}
wx\in B &\LongIff \widehat{wx}^\frown 2\in T_B \\
 &\LongIff \hat{x}^\frown 2\in (T_B)_{\hat{w}} \\
 &\LongIff \hat{x}^\frown 2\in T_A \\
 &\LongIff x\in A,
\end{align*}
so $wA=B_w$.  Thus the map $t\from \mc P(M_2)\to Tr(3)$ sending $A$ to $T_A$ is a Borel reduction.

Next, we define a map $C\from Tr(3)\to Tr(2)$ which when composed with $t$ will be a Borel reduction from $\qlt_2^p$ to $\qlt_2^{tree}\restriction\Lambda(2)$.  First we inductively define a map $c\from 3^{<\N}\to 2^{<\N}$.  Let $c(e)=e$, $c(0)=00$, $c(1)=01$, and $c(2)=10$.  Now assume that $c$ has been defined for all words of length $\leq n$, and let $w=x^\frown u$, where $x\in\{a,b,c\}$ and $u\in 3^{<\N}$ has length $n$.  Define $c(w)=c(x)^\frown c(u)$.  Given $T\in Tr(3)$, apply $c$ to the elements of $T$ and close the resulting set under initial segments to get a tree $C(T)\in Tr(2)$.

Suppose that $t(A)\qlt_3^{tree} t(B)$, so there exists $u\in 3^{<\N}$ (in fact, $u\in 2^{<\N}$) such that $t(A)=t(B)_u$.  Then for all $w\in 3^{<\N}$
\begin{align*}
c(w)\in c(t(A)) &\LongIff w\in t(A) \\
 &\LongIff u^\frown w\in t(B) \\
 &\LongIff c(u^\frown w)=c(u)^\frown c(w)\in c(t(B))
\end{align*}
Hence $C(t(A))=C(t(B))_{c(u)}$ and thus $C(t(A))\qlt_2^{tree} C(t(B))$.

Now suppose that $C(t(A))\qlt_2^{tree} C(t(B))$, and so there exists $w\in 2^{<\N}$ such that $C(t(A))=C(t(B))_w$.  Suppose that $w$ is not in the image of $c$.  Then we either have $C(t(B))_w=\emptyset$, which is impossible, or $w$ is an initial segment of odd length of something in the image of $c$.  If $w$ ends in a $0$, then $100\in C(t(B))_w$, but this is not in $C(t(A))$.  If $w$ ends in a $1$, then $00\notin C(t(B))_w$, but $00\in C(t(A))$.  Thus $w$ is in the image of $c$, say $w=c(u)$.  Then
\begin{align*}
u^\frown v\in t(B) &\LongIff c(u)^\frown c(v)\in C(t(B)) \\
 &\LongIff c(v)\in C(t(A)) \\
 &\LongIff v\in t(A)
\end{align*}
Thus $t(A)=t(B)_u$, and so $t(A)\qlt_3^{tree} t(B)$.
\end{proof}

\section{Universal quasi-orders from group theory}\label{sec:GroupUniversal}

We have seen that $E_\infty$ is the same as the quasi-order $\qlt_{\F_2}^2$, and so our universal quasi-order $\qlt_{M_2}^2$ is a natural generalization of $E_\infty$.  At this point, we will turn our attention to other quasi-orders which can be seen as generalizations of $E_\infty$.  The most obvious generalization is the quasi-order $\subseteq^{\F_2,t}_{\mc P(\F_2)}$ on $\mc P(\F_2)$ defined by
$$ A\subseteq^{\F_2,t}_{\mc P(\F_2)} B \LongIff (\exists g\in\F_2)\hspace{4pt} gA\subseteq B. $$
Replacing the $\subseteq$ symbol on the right-hand side of the definition with the $=$ symbol gives $E_\infty$.  Unfortunately for our purposes, the above quasi-order is clearly not countable, and in fact has been shown to be a universal $K_\sigma$ quasi-order (see Louveau-Rosendal \cite{LR05}).  Consequently, $\subseteq^{\F_2,t}_{\mc P(\F_2)}$ is much more complex than any countable Borel quasi-order.  So we instead consider the following (countable Borel) quasi-order.

\begin{definition}\label{def:shiftisectqo}
If $G$ is a countable group, then $\qlt_t^G$ is the countable Borel quasi-order on $\mc P(G)$ defined by $$ A\qlt_t^G B  \LongIff (\exists g_1,\ldots,g_n\in G)\hspace{4pt} A=g_1 B\cap\ldots\cap g_n B.$$
\end{definition}

For any group $G$, let $\Omega(G)$ be the set of infinite subsets of $G$.  In order to show that $\qlt_t^{\F_2}$ is a universal countable Borel quasi-order, we will reduce $\qlt_2^{tree}\restriction\Lambda(2)$ to $\qlt_t^{\F_2}\restriction\Omega(\F_2)$.

Every tree on 2 is isomorphic to a tree $T$ on $\{a,b\}$, and these can easily be identified with subsets of $\F_2=\<a,b\>$.  If we take a subset $T\subset\F_2$ corresponding to a tree and multiply it on the left by $w^{-1}$, then the positive words in $w^{-1}T$ are precisely $T_w$.  Unfortunately, there is no natural way to pick out the positive words from $w^{-1}T$ simply by intersecting it with other shifts of $T$, and so we instead will define a set based on $T$ for which $T_w$ is easy to find simply by intersecting its shifts. In order to do this, we will look at subsets of $\F_\infty$, the free group on countably many generators.  We list the generators of $\F_\infty$ as
$$ \{ a,b,x_a,x_b,x_{aa},x_{ab},x_{ba},x_{bb},x_{aaa},\ldots \}. $$

Using the two generators $a,b$ we identify $T$ with a subset of the group, to which we add the sets $x_w wT_w$ for $w\in T$.  Call this new set $T'$.  Note that for all $w\in T$, $w^\frown T_w$ is a subset of $T$, and so $x_w wT_w\subseteq T'$.  Then $T'\cap x_w^{-1}T'=wT_w$, since $w T_w$ is the set of positive words in $x_w^{-1} T'$.  We can then multiply by $wT_w$ by $w^{-1}$ to find $T_w$.  However, the map sending $T$ to $T'$ is not a Borel reduction.  Although we can now find $T_w$ by intersecting shifts of $T'$, $T_w$ maps to $(T_w)'$, so that is the set we need to find.  The following proof addresses this issue.

\begin{theorem}\label{thm:IsectUniversal}
$\qlt_t^{\F_\infty}\restriction\Omega(\F_\infty)$ is a universal countable Borel quasi-order.
\end{theorem}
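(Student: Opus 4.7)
The plan is to construct a Borel reduction $\Phi \from \Lambda(2) \to \Omega(\F_\infty)$ from $\qlt_2^{tree}\restriction\Lambda(2)$ to $\qlt_t^{\F_\infty}\restriction\Omega(\F_\infty)$, which combined with Theorem~\ref{thm:TreesUniversal} will establish the universality claim. The starting point is the naive assignment $T \mapsto T' = T \cup \bigcup_{w \in T} x_w w T_w$ discussed just before the theorem, which already enjoys the local identity $(x_w w)^{-1} T' \cap w^{-1} T' = T_w$. The difficulty, as noted there, is that when $T = S_u$ the block of $T'$ carrying the subtree $T_w$ is $x_w w T_w$, whereas the corresponding block of $S'$ is $x_{uw}(uw) S_{uw}$, with a mismatched tag and prefix; no single left-shift of $S'$ can realign these blocks simultaneously for every $w$.

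To address this, I plan to make the tags uniform across all depths. A natural candidate is a self-similar refinement in which the family $\{x_w\}$ is replaced by a single distinguished generator $x$ of $\F_\infty$ and the construction is iterated: define $\Phi(T)$ as the fixed point of the recursion $\Phi(T) = T \cup \bigcup_{w \in T} x \cdot w \cdot \Phi(T_w)$. Explicitly, $\Phi(T)$ consists of all words of the form $x w_1 x w_2 \cdots x w_n t$ with $n \geq 0$, $w_1 \in T$, $w_{i+1} \in T_{w_1 \cdots w_i}$, and $t \in T_{w_1 \cdots w_n}$. The virtue of the uniform tag is that, when $T = S_u$, the single left-shift $g = x u^{-1} x^{-1}$ carries each element $x(uw_1) x w_2 \cdots x w_n t$ of $\Phi(S)$ onto the corresponding element $x w_1 x w_2 \cdots x w_n t$ of $\Phi(T)$, realigning all depths of the two encodings at once.

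With $\Phi$ in hand, the forward direction reduces to choosing a finite collection of shifts $g_1,\ldots,g_k \in \F_\infty$ (built from $u$, $x$, and perhaps a few auxiliary generators) so that $\Phi(T) = \bigcap_{i \leq k} g_i \Phi(S)$. Besides the principal shift $g = x u^{-1} x^{-1}$, one needs further shifts to pick up the depth-$0$ part $T \subseteq \Phi(T)$ and to intersect away parasitic translates such as words of the form $x u^{-1} v \cdots$ that arise from paths in $S$ not passing through $u$. For the reverse direction, I would exploit the rigid alternating structure of $\Phi$-elements: any shifts witnessing $\Phi(T) = \bigcap_i g_i \Phi(S)$ must have a restricted combinatorial form, from which a single positive word $u \in \{a,b\}^{<\N}$ with $T = S_u$ can be read off.

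I expect the forward direction to be the main obstacle: calibrating $\Phi$ so that finitely many shifts carve out $\Phi(S_u)$ on the nose, rather than merely containing it, is delicate, and the recursion above may well need to be enriched with additional marker generators so that the parasitic translates can be cut away without losing any bona fide elements of $\Phi(S_u)$.
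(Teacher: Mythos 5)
Your uniform-tag encoding $\Phi$ breaks the forward direction in a way that cannot be repaired by adding more shifts to the intersection, because the failure is an under-, not over-, approximation. Write $T=S_u$. Since $e\in T\subseteq\Phi(T)$, any $g$ with $\Phi(T)\subseteq g\Phi(S)$ must satisfy $g^{-1}\in\Phi(S)$, say $g^{-1}=xv_1\cdots xv_ms$; requiring also $T\subseteq g\Phi(S)$ then forces $S_u\subseteq S_v$ for $v=v_1\cdots v_ms$. (Note your candidate $g=xu^{-1}x^{-1}$ is not even admissible: $g^{-1}t=xux^{-1}t$ is not a positive word for $t\in T$. The correct analogue is $g=u^{-1}x^{-1}$, which increases depth by one rather than preserving it.) Now pick any nonempty $w\in\{a,b\}^{<\N}$ with $uw\in S$ (possible since $T$ is infinite, so $u$ is not a leaf of $S$), and consider the single word $wx$. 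For every admissible $g$, $g^{-1}(wx)=xv_1\cdots xv_m(sw)x$ is a valid element of $\Phi(S)$ because $vw\in S$, so $wx\in g\Phi(S)$. Yet $wx\notin\Phi(T)$: depth-$0$ elements of $\Phi(T)$ are $x$-free, depth-$\geq1$ elements begin with $x$, and $wx$ is neither. Since intersections only shrink, $\bigcap_i g_i\Phi(S)\supsetneq\Phi(T)$ for \emph{any} choice of shifts $g_1,\ldots,g_k$, and the reduction fails already in the forward direction. The difficulty you anticipate ("carving out $\Phi(S_u)$ on the nose rather than merely containing it") is therefore misdiagnosed, and adding "marker generators" to repair it would effectively re-introduce non-uniform tags, abandoning your central device.

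The paper's construction avoids this through two features absent from your sketch. It retains position-indexed tags $x_w$ but replaces the linear chain with the maximally redundant recursion $f(w)=\bigl(\bigcup_{w=s^\frown t,\,s,t\neq e}f(s)f(t)\bigr)\cup\{x_ww\}$, so that $f(w)$ contains both $w$ and $x_ww$, and hence $f(w^\frown u)$ contains both $w\cdot f(u)$ and $x_ww\cdot f(u)$. Consequently the \emph{two} shifts $w^{-1}$ and $(x_ww)^{-1}$ each contain $G(T_w)$, and their intersection is exactly $G(T_w)$ (Lemma~\ref{itsahom}); the exact equality hinges on the rigidity of the position tags (Lemmas~\ref{uextendsw} and~\ref{wordinfwft}), which a uniform tag $x$ destroys. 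Separately, the paper first passes trees through the "outline" map $S$, which adds boundary markers $c,d$ so that a mere containment $S(T)\subseteq S(T')$ already forces $T=T'$ (Lemma~\ref{lemma:OutlineTree}). This is essential for the converse direction: from $G(T)=\bigcap_i g_iG(T')$ one only directly extracts a \emph{containment} of the underlying trees, and the outline is what upgrades it to an equality. Your proposal has no analogue of this step and would need one even if the forward direction could be salvaged.
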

\begin{proof}
We will construct the reduction in a few steps.  We start with trees on $\{a,b\}$, which we then map to trees on $\{a,b,c,d\}$ for technical reasons.  Next we define a map $f\from \{a,b,c,d\}^{<\N}\to\mc P(\F_\infty)$, which will induce a map $F\from Tr(\{a,b,c,d\})\to\mc P(\F_\infty)$.  The composition of these two maps will be our reduction.

If $T\in Tr(\{a,b\})$, define
$$t_a(T)=\{w\in T \mid w^\frown a\notin T\}. $$
Similarly define $t_b(T)$.  These sets are elements of $T$ which are ``along the edge" of the tree, i.e.\ some immediate extension of these words is not in the tree.  We define $S\from Tr(\{a,b\})\to Tr(\{a,b,c,d\})$ by
\begin{equation}\label{Sdefinition}
S(T) = T \cup (t_a(T)^\frown c) \cup (t_b(T)^\frown d)
\end{equation}
where $X^\frown z=\{x^\frown z \mid x\in X\}$.  Here $S$ ``outlines" the tree using the letters $c$ and $d$.  The following property of $S$ will be important later.

\begin{lemma}\label{lemma:OutlineTree}
If $T,T'\in Tr(\{a,b\})$ and $S(T)\subseteq S(T')$, then $T=T'$.
\end{lemma}
\begin{proof}
It is easily seen that $S(T)\subseteq S(T')$ implies $T\subseteq T'$, as
$$S(T)\cap \{a,b\}^{<\N}=T \text{ and } S(T')\cap\{a,b\}^{<\N}= T'. $$
Suppose $w\in \{a,b\}^{<\N}\setminus T$.  Then there is some initial segment of ${w'}\subset w$ (possibly the empty string) and some $x\in\{a,b\}$ such that $w'\in t_x(T)$, i.e.\ $w={w'}^\frown x^\frown t$, where $w'\in T$, ${w'}^\frown x\notin T$, and $t\in\{a,b\}^{<\N}$.  Then ${w'}^\frown y\in S(T)$ for some $y\in\{c,d\}$, and so ${w'}^\frown y\in S(T')$.  This is only possible if ${w'}^\frown x$ and all its extensions are not in $T'$, and in particular $w\notin T'$.
\end{proof}

We list the generators of $\F_\infty$ as
$$\{a,b,c,d,x_a,x_b,x_c,x_d,x_{aa},x_{ab},x_{ac}\ldots\}$$
i.e.\ every string in $\{a,b,c,d\}^{<\N}$ (except the empty string) has a unique generator associated to it in addition to generators corresponding to the letters in our trees.  The empty string in $\{a,b,c,d\}^{<\N}$ and the identity element in $\F_\infty$ will both be written as $e$.  This should not cause confusion, although both uses will appear close to each other.  Finally, we recall that if $A,B\in\mc P(\F_\infty)$, then \mbox{$AB=\{ab\mid a\in A, b\in B\}$}.  We can now define $f\from\{a,b,c,d\}^{<\N}\to\mc P(\F_\infty)$ inductively.

\begin{center}
\begin{align*}
f(e) &= \{e\} \\
f(a) &= \{a,x_a a\} \\
f(b) &= \{b,x_b b\} \\
f(c) &= \{c,x_c c\} \\
f(d) &= \{d,x_d d\} \\
f(w) &= \left( \bigcup_{\substack{w=s^\frown t \\ s,t\neq e}} f(s)f(t) \right) \cup \{x_w w\}
\end{align*}
\end{center}

The idea here is that every set $f(w)$ contains elements which encode the relation of $w$ to its initial segments.  Then define $F\from Tr(\{a,b,c,d\})\to \mc P(\F_\infty)$ by
$$F(T)=\bigcup_{w\in T} f(w). $$

There are a few helpful facts to record at this point.  The simplest one is that $w\in f(w)$, which follows by a simple induction.  The others we record as lemmas.

\begin{lemma}
If $u,v\in \{a,b,c,d\}^{<\N}$ are not equal, the sets $f(u)$ and $f(v)$ are disjoint.
\end{lemma}
\begin{proof}
Define the function $\Phi\from \F_\infty\to \{a,b,c,d\}^{<\N}$ as
\begin{align*}
\Phi(g) = &\text{the word in $\{a,b,c,d\}^{<\N}$ obtained by removing all other letters}\\
 & \text{from the freely reduced representation of $g$.}
\end{align*}
By a simple inductive argument we see that for all $w\in \{a,b,c,d\}^{<\N}$, $\Phi$ is constant on $f(w)$ and equal to $w$.  Thus the sets are disjoint.
\end{proof}

\begin{lemma}\label{uextendsw}
If a word starting with $x_w$ is in $f(u)$, then $w\subset u$.
\end{lemma}
\begin{proof} This follows from an straightforward induction on the length of $u$. \end{proof}

\begin{lemma}\label{wordinfwft}
If $\gamma\in f(u)$ starts with $x_w w$ and $u=w^\frown t$, then $\gamma=x_w w \lambda$, with $\lambda\in f(t)$.
\end{lemma}
\begin{proof}
If $t=e$, then $\gamma=x_w w$.  Otherwise, there must be some $\alpha,\beta$ such that $u=\alpha^\frown \beta$ and $\gamma\in f(\alpha)f(\beta)$.  We can then split $\gamma$ into two words, $\gamma=\delta\lambda$, where $\delta$ starts with $x_w w$ and $\delta\in f(\alpha)$, while $\lambda\in f(\beta)$.  By the previous lemma, $w\subset\alpha$, say $\alpha=w^\frown z$.  Then $u=w^\frown z^\frown \beta$. We write $t=z^\frown \beta$.  By induction, $\delta=x_w w \delta'$ with $\delta'\in f(z)$.  Then $\gamma=x_w w \delta'\lambda$, and $\delta'\lambda\in f(z)f(\beta)\subseteq f(t)$ by definition.
\end{proof}

We define the map \mbox{$G\from Tr(\{a,b\})\to \mc P(\F_\infty)$} by
$$ G(T) = F(S(T)) $$
where $S$ is the map defined in \eqref{Sdefinition}.

\begin{lemma}\label{itsahom}
For all $w\in \{a,b\}^{<\N}$ and all nonempty $T\in Tr(\{a,b,c,d\})$,
$$G(T)\cap x_w^{-1} G(T)= wG(T_w)$$
and hence
$$w^{-1}G(T)\cap (x_w w)^{-1} G(T)=G(T_w). $$
\end{lemma}
\begin{proof}
First, we will show that $G(T)\cap x_w^{-1} G(T)\subseteq wG(T_w)$.  Every element of $G(T)$ is a positive word in the generators of $\F_\infty$, so any word not starting with $x_w$ will be freely reduced in $x_w^{-1} G(T)$ and so begin with $x_w^{-1}$, and thus not be in $G(T)$.  So we need only focus on the words that start with $x_w$.

Suppose $g\in f(u)\subseteq G(T)$ and $g=x_w \alpha$ for some $\alpha\in \F_\infty$.  By our inductive definition, this implies $g=x_w w \beta$ for some $\beta\in \F_\infty$.  By Lemma \ref{uextendsw}, we must have $u=w^\frown t$ for some $t\in \{a,b,c,d\}^{<\N}$.  By Lemma \ref{wordinfwft}, $\beta\in f(t)$.  Also, $w\beta$ is in $f(w)f(t)$, so $w\beta\in G(T)\cap x_w^{-1}G(T)$.  In addition, $\beta\in G(T_w)$, since $t\in (S(T))_w=S(T_w)$ (since $w\in\{a,b\}^{<\N}$) and so $f(t)\subseteq G(T_w)$.  Thus $G(T)\cap x_w^{-1} G(T)\subseteq wG(T_w)$.

If $g\in G(T_w)$, then there is some $u\in S(T_w)$ such that $g\in f(u)$.  Then
$$x_w wg,wg\in f(w)f(u)\subseteq G(T)$$
so $wg \in G(T)\cap x_w^{-1} G(T)$.  Thus $G(T)\cap x_w^{-1} G(T)\supseteq wG(T_w)$.
\end{proof}

Lemma \ref{itsahom} shows that for $T,S\in\Lambda(2)$, if $T\qlt_2^{tree} S$, then $G(T)\qlt_t^{\F_\infty} G(S)$.  Next we check the other direction.

Suppose that $G(T)=g_1 G(T') \cap \ldots \cap g_n G(T')$.  We know that $e\in G(T)$ (since $T$ is nonempty), which means that each $g_i$ must be an inverse of an element in $G(T')$, say $g_i^{-1}=h_i\in G(T')$.  Fix some $1\leq i\leq n$ and suppose that $\Phi(h_i)=w$, i.e.\ $h_i\in f(w)$.  If $u\in S(T)$, then $x_u u\in G(T)$.  This implies $h_i x_u u \in G(T')\cap f(w^\frown u)$, and in particular the intersection is nonempty, so $w^\frown u \in S(T')$. Thus $S(T)\subseteq S(T')_w$.

If $w\notin\{a,b\}^{<\N}$, then $S(T')_w$ is either empty or a single element, but $S(T)$ is infinite.  Thus $w\in\{a,b\}^{<\N}$, and so $S(T')_w=S(T'_w)$.  It follows that $S(T)\subseteq S(T'_w)$, and so by Lemma \ref{lemma:OutlineTree}, $T=T'_w$.  Thus $G$ is a Borel reduction.  This completes the proof of Theorem \ref{thm:IsectUniversal}.
\end{proof}

\begin{cor}\label{cor:TranslateUniversal}
$\qlt_t^{\F_2}\restriction\Omega(\F_2)$ is a universal countable Borel quasi-order, and so $\qlt_t^{\F_2}$ is a universal countable Borel quasi-order.
\end{cor}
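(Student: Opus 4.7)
The plan is to reduce $\qlt_t^{\F_\infty}\restriction\Omega(\F_\infty)$, shown to be universal in Theorem~\ref{thm:IsectUniversal}, to $\qlt_t^{\F_2}\restriction\Omega(\F_2)$. The key ingredient is an injective homomorphism $\iota\from\F_\infty\into\F_2$; for concreteness one can take $\iota(x_n)=a^n b a^{-n}$, where $\F_2=\<a,b\>$ and $\F_\infty=\<x_1,x_2,\ldots\>$, using the classical fact that these elements freely generate a subgroup of $\F_2$ of countable rank. Define the candidate reduction by $A\mapsto A^*:=\iota(A)$. Since $\iota$ is injective, $A^*$ is infinite whenever $A$ is, so this maps $\Omega(\F_\infty)$ into $\Omega(\F_2)$; Borelness is immediate.

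For the forward direction, if $A=g_1 B\cap\cdots\cap g_n B$ in $\F_\infty$, then since $\iota$ is a homomorphism and injective,
$$\iota(A)=\iota(g_1 B)\cap\cdots\cap\iota(g_n B)=\iota(g_1)\iota(B)\cap\cdots\cap\iota(g_n)\iota(B),$$
so $A^*\qlt_t^{\F_2} B^*$ via the shifts $h_i=\iota(g_i)\in\F_2$.

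The main point to verify is the reverse direction, which I expect to be the only subtle step: a priori the witnessing shifts $h_1,\ldots,h_n\in\F_2$ need not lie in the subgroup $M:=\iota(\F_\infty)$. However, pick any $a\in A$ (which exists since $A$ is infinite). Then $\iota(a)\in A^*=h_i B^*$ for each $i$, so there exists $b_i\in B$ with $h_i^{-1}\iota(a)=\iota(b_i)$, giving $h_i=\iota(a b_i^{-1})\in M$. Write $h_i=\iota(g_i)$; then $\iota(A)=\bigcap_i\iota(g_i)\iota(B)=\iota\bigl(\bigcap_i g_i B\bigr)$, and injectivity of $\iota$ yields $A=g_1 B\cap\cdots\cap g_n B$, so $A\qlt_t^{\F_\infty}B$.

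Finally, the ``so $\qlt_t^{\F_2}$ is a universal countable Borel quasi-order'' clause is automatic: the inclusion $\Omega(\F_2)\into\mc P(\F_2)$ is trivially a Borel reduction from $\qlt_t^{\F_2}\restriction\Omega(\F_2)$ to $\qlt_t^{\F_2}$, so any countable Borel quasi-order reduces to $\qlt_t^{\F_2}$ by composition.
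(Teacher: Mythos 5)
Your proposal is correct and follows essentially the same route as the paper: embed $\F_\infty$ into $\F_2$, push subsets forward, and use that the image of the embedding is a subgroup to force the witnessing shifts into that subgroup. Your treatment of the reverse direction (picking a single element of $A^*$ and solving for $h_i$) is just a slightly more concrete phrasing of the paper's coset-disjointness argument, and the rest is identical.
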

\begin{proof}
Let $\phi\from\F_\infty\to\F_2$ be an embedding.  Then $\phi$ induces a map
\begin{align*}
\Phi\from\Omega(\F_\infty) &\to\Omega(\F_2) \\
A &\mapsto \{ \phi(a) \mid a\in A \}
\end{align*}
If $A,B\in\Omega(\F_\infty)$ and there exist $g_1,\ldots,g_n\in\F_\infty$ such that
$$A=g_1 B\cap\ldots\cap g_n B$$
then $\Phi(A)=\phi(g_1)\Phi(B)\cap\ldots\cap \phi(g_n)\Phi(B)$.

Conversely, suppose that
$$\Phi(A)=h_1\Phi(B)\cap\ldots\cap h_n\Phi(B). \hspace{.5in} (*)$$
If some $h_i$ is not in the image of $\phi$, then $h_i\Phi(B)$ is disjoint from any set in the image of $\Phi$, and so the right hand side cannot equal the left hand side unless $\Phi(A)=\emptyset$, which is impossible.  This implies that every $h_i$ in $\displaystyle{(*)}$ is in the image of $\phi$.  It follows that $A=\phi^{-1}(h_1) B\cap\ldots\cap\phi^{-1}(h_n) B$.
\end{proof}

\begin{remark}
The above proof shows that if $G$ is any countable group containing $\F_2$ as a subgroup, then $\qlt_t^G\restriction\Omega(G)$ is a universal countable Borel quasi-order.
\end{remark}

Let $E_c(G)$ denote the conjugacy equivalence relation on the standard Borel space $\Sg(G)$ of subgroups of $G$, i.e.\ for $A,B\in\Sg(G)$,
$$ A \mathbin{E_c(G)} B \LongIff (\exists g\in G)\, A=gBg^{-1}. $$
In \cite{Gao00}, Gao used a simple coding technique to prove the following result.

\begin{theorem}[Gao]
If $G=K*H$, where $K$ has a nonabelian free subgroup and $H$ is nontrivial cyclic, then $E_c(G)$ is a universal countable Borel equivalence relation.
\end{theorem}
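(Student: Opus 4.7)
My approach is to show $E_\infty \leq_B E_c(G)$. Since $E_c(G)$ is clearly a countable Borel equivalence relation (conjugation is a Borel action of the countable group $G$ on $\Sg(G)$), this will establish universality. I identify $E_\infty$ with the orbit equivalence relation of the shift action of $\F_2$ on $\mc P(\F_2)$, fix an embedding $\F_2 \into K \leq G$ (using that $K$ contains a nonabelian free subgroup), and write $H = \langle t \rangle$ for the given cyclic factor, so $G = K * H$.

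The candidate reduction sends $A \in \mc P(\F_2)$ to the subgroup $H_A := \langle w H w^{-1} : w \in A \rangle \leq G$. The forward direction is a direct computation: if $A' = gA$ for some $g \in \F_2$, then
\[
H_{A'} \;=\; \langle (gw) H (gw)^{-1} : w \in A \rangle \;=\; g H_A g^{-1},
\]
so $H_A$ and $H_{A'}$ are $G$-conjugate. For the reverse direction, suppose $g H_A g^{-1} = H_{A'}$ for some $g \in G$; I must produce $g' \in \F_2$ with $A' = g' A$. The main tool is the structure theory for subgroups of free products (Kurosh's theorem, or equivalently Bass-Serre theory applied to the action of $G$ on its Bass-Serre tree): each subgroup of $G = K * H$ decomposes canonically as a free product of conjugates of subgroups of $K$ and $H$ together with a free part. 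Using malnormality of $H$ in $K * H$ (so that the normalizer of $H$ in $G$ is $H$ itself) together with $K \cap H = \{e\}$, the family $\{w H w^{-1} : w \in A\}$ is precisely the collection of maximal $H$-conjugate factors in the Kurosh decomposition of $H_A$, and distinct $w \in K$ yield distinct conjugates. Conjugation by $g$ must then biject this family for $H_A$ onto the corresponding one for $H_{A'}$, sending $w H w^{-1}$ to $(gw) H (gw)^{-1}$; analyzing $g$ in free-product normal form and comparing the indexing data forces $g$ to act as left multiplication by some $g' \in \F_2$ (modulo an $H$-factor, which does not change the conjugate subgroup).

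The main obstacle is this last step: controlling the conjugator $g \in G$ and showing that the Kurosh factors of $H_A$ rigidly recover $A$ up to the $\F_2$-shift. It may be useful to restrict the reduction to a shift-invariant Borel set of \emph{generic} $A$ (for instance, those containing $e$ and avoiding coincidences that could produce extra automorphisms of $H_A$) on which $E_\infty$ remains universal, so as to eliminate stray conjugators and make the extraction of $g' \in \F_2$ routine.
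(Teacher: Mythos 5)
The paper does not actually prove this theorem: it is cited as Gao's result from \cite{Gao00}, and the paper's own Theorem \ref{thm:ConjUniversal} on $\qlt_c^{G*H}$ is described as ``a straightforward adaptation of Gao's argument.'' So the relevant comparison is with the map $K$ and the cancellation analysis in that proof. Your coding $A \mapsto H_A = \langle wHw^{-1} : w \in A\rangle$ is exactly Gao's (and exactly the map $K$ in Theorem \ref{thm:ConjUniversal}), and your forward computation $H_{gA} = g H_A g^{-1}$ is correct.

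The converse as you have written it has a gap, and it is not quite where you locate it. You assert that $\{wHw^{-1} : w\in A\}$ is ``precisely the collection of maximal $H$-conjugate factors in the Kurosh decomposition of $H_A$,'' as though this collection were intrinsic to $H_A$ as a subgroup of $G$. But the Kurosh factors of $H_A$ are canonical only up to conjugacy \emph{inside} $H_A$: for any $s \in H_A$ and $w\in A$ the subgroup $s\,wHw^{-1}\,s^{-1}$ is also a $G$-conjugate of $H$ contained in $H_A$, and as soon as $|A|\geq 2$ there exist $s\in H_A$ with $swH \neq w'H$ for every $w' \in \F_2$. Consequently, from $g H_A g^{-1} = H_{A'}$ you can only conclude, for each $w\in A$, that $gwHw^{-1}g^{-1}$ is $H_{A'}$-conjugate to some $w'Hw'^{-1}$, i.e.\ (using malnormality of $H$) that $gw \in s_w\,w'\,H$ for some $s_w \in H_{A'}$ depending on $w$. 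Eliminating the $s_w$ is the entire content of the converse; it does not follow from Kurosh alone. In particular your proposed remedy --- restricting to a ``generic'' invariant set of $A$ --- does not help, because the extra conjugators arise from inner automorphisms of $H_{A'}$, which exist for every $A'$ with at least two elements.

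The fix is genuine but short. Pick $w_0\in A$ and absorb its $H_{A'}$-conjugator into $g$ (replace $g$ by $s_{w_0}^{-1}g$, which still conjugates $H_A$ onto $H_{A'}$ since $s_{w_0}\in H_{A'}$); this reduces to $g = w'_0\,h_0\,w_0^{-1}$ with $w'_0\in A'$, $h_0\in H$. Then take any other $w_1\in A$ and compare normal forms in $K*H$ on both sides of $gw_1 \in s_{w_1}w'_1 H$; malnormality of $H$ and the free-product normal form force $w'_1 = (w'_0 w_0^{-1})\,w_1$, so $g' = w'_0 w_0^{-1}\in\F_2$ works. This is exactly what the ``preserved occurrence of $h$'' argument in the proof of Theorem \ref{thm:ConjUniversal} accomplishes in the harder $n$-conjugator setting; there the authors produce $\beta_i \in G$ with $\gamma_i K(B)\gamma_i^{-1} = \beta_i K(B)\beta_i^{-1}$ by exhibiting $\gamma_i = \beta_i u_i^{-1}$ with $u_i \in K(B)$, which is precisely the absorption of an inner conjugator. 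Your Bass-Serre framing is a reasonable alternative presentation of the same malnormality and normal-form facts that Gao's combinatorial argument uses, but as sketched it skips the step that actually pins down the conjugator.
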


In light of the relationship between $E_\infty$ and $\qlt_t^{\F_2}$, it is natural to consider the following countable Borel quasi-order:

\begin{definition}\label{def:subgpisectqo}
Let $G$ be a countable group.  Then $\qlt_c^G$ is the countable Borel quasi-order on $\Sg(G)$ defined by
$$A\qlt_c^G B \LongIff (\exists g_1,\ldots,g_n\in G)\hspace{4pt} A=g_1 B g_1^{-1}\cap\ldots\cap g_n B g_n^{-1}.$$
\end{definition}

Let $\Gamma(G)$ be the standard Borel space of infinite subgroups of $G$.  Then the proof of the following result is a straightforward adaptation of Gao's argument in \cite{Gao00}.

\begin{theorem}\label{thm:ConjUniversal}
Suppose that $G$ is a countable group containing a nonabelian free subgroup and that $H$ is a nontrivial cyclic group.  Then $\qlt_c^{G*H}\restriction\Gamma(G*H)$ is a universal countable Borel quasi-order, and so $\qlt_c^{G*H}$ is a universal countable Borel quasi-order.
\end{theorem}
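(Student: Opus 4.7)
The plan is to reduce the universal quasi-order $\qlt_t^{\F_2}\restriction\Omega(\F_2)$ of Corollary \ref{cor:TranslateUniversal} to $\qlt_c^{G*H}\restriction\Gamma(G*H)$, adapting Gao's coding from \cite{Gao00} to the quasi-order setting. Fix an embedding $\F_2\into G$ coming from the given nonabelian free subgroup and let $h$ generate $H$. Define the Borel map $\Psi\from\Omega(\F_2)\to\Gamma(G*H)$ by $\Psi(A)=\<aha^{-1}\mid a\in A\>$. Since the centralizer of $h$ in $G*H$ is precisely $H$ (by the free-product normal form theorem) and $\F_2\cap H=\{1\}$, the map $a\mapsto aha^{-1}$ is injective on $\F_2$, so $\Psi(A)$ is an infinite subgroup whenever $A$ is infinite; also $g\Psi(A)g^{-1}=\Psi(gA)$ for all $g\in\F_2$ directly from the definition.

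The verification that $\Psi$ is a Borel reduction rests on two structural facts about the coding, both consequences of the Kurosh decomposition $\Psi(A)\cong *_{a\in A} aHa^{-1}$ together with normal-form analysis in $G*H$:
\begin{enumerate}
\item[(i)] $\Psi(A)\cap\Psi(B)=\Psi(A\cap B)$ for all $A,B\subseteq\F_2$, and
\item[(ii)] for any $k\in G*H$ and nonempty $B\subseteq\F_2$, if $k\Psi(B)k^{-1}$ contains some $aha^{-1}$ with $a\in\F_2$, then there exists $g\in\F_2$ with $k\Psi(B)k^{-1}=g\Psi(B)g^{-1}=\Psi(gB)$.
\end{enumerate}
Fact (i) follows from uniqueness of normal forms: every $w\in\Psi(A)$ has a unique reduced expression $(a_1h^{k_1}a_1^{-1})\cdots(a_nh^{k_n}a_n^{-1})$ in the Kurosh factorization, and the sequence $a_1,\ldots,a_n$ is recoverable from $w$'s normal form in $G*H$, hence the same for any expression of $w$ in $\Psi(B)$, forcing each $a_i\in A\cap B$. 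Fact (ii) follows similarly: applying the single-$h$ case of (i)-style analysis to $k^{-1}aha^{-1}k\in\Psi(B)$ forces it to equal $bhb^{-1}$ for some $b\in B$, which gives $b^{-1}k^{-1}a\in C_{G*H}(h)=H$ and thus $k=ah^{-\ell}b^{-1}$ for some $\ell$; since $bh^\ell b^{-1}\in\Psi(B)$ normalizes $\Psi(B)$, we conclude $k\Psi(B)k^{-1}=(ab^{-1})\Psi(B)(ab^{-1})^{-1}$, so $g=ab^{-1}\in\F_2$ works.

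Granting (i) and (ii), the reduction argument is formal. Forward: if $A=g_1B\cap\cdots\cap g_nB$ with $g_i\in\F_2$, then by (i), $\Psi(A)=\bigcap_i\Psi(g_iB)=\bigcap_i g_i\Psi(B)g_i^{-1}$. Reverse: if $\Psi(A)=\bigcap_i k_i\Psi(B)k_i^{-1}$ with $k_i\in G*H$, then each $k_i\Psi(B)k_i^{-1}$ contains $\Psi(A)$, hence contains $aha^{-1}$ for every $a\in A$, so (ii) yields $g_i\in\F_2$ with $k_i\Psi(B)k_i^{-1}=\Psi(g_iB)$; then (i) gives $\Psi(A)=\Psi(\bigcap_i g_iB)$, and injectivity of $\Psi$ on $\mc P(\F_2)$ (since $c\in C\Iff chc^{-1}\in\Psi(C)$, by the same normal-form argument) yields $A=\bigcap_i g_iB$. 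Finally, universality of $\qlt_c^{G*H}$ on all of $\Sg(G*H)$ follows from universality of the restriction, since the inclusion $\Gamma(G*H)\hookrightarrow\Sg(G*H)$ is itself a Borel reduction.

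The main obstacle is the normal-form bookkeeping underlying facts (i) and (ii); one must carefully track how the $\F_2$- and $H$-syllables of an element of $G*H$ are constrained when that element is written as a product of conjugates $bh^kb^{-1}$ with $b\in B$. The single-$h$ case, essential for both facts, pins down the $\F_2$-syllables flanking the unique $h$-syllable to coincide with the conjugator $b$, which propagates back to force all conjugators used in a long product to agree — exactly the rigidity that lets one recover the underlying subsets from the coded subgroups.
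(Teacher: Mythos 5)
Your coding $\Psi(A)=\langle aha^{-1}\mid a\in A\rangle$ is exactly the map $K$ in the paper, and your facts (i) and the identity $g\Psi(A)g^{-1}=\Psi(gA)$ are the same two lemmas the paper proves first, so the overall strategy coincides. Where you genuinely diverge is in the reverse direction: the paper recovers $G$-conjugators via a counting argument (for each $\gamma_i$ and each $x\in A$ it tracks the ``preserved occurrence of $h$'' in a cancellation of $\gamma_i w_{x,i}\gamma_i^{-1}$, shows at most two $x$ are bad for each $i$, and uses infiniteness of $A$ to pick an $x_0$ good for all $i$), whereas your fact (ii) extracts the conjugator from a single $a\in A$ by a normal-form/conjugacy analysis in the free product. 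Once correct, that is a tidier route to the same conclusion, since it does not need $A$ to be infinite at this point.

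There is, however, a gap in (ii) as written: it is not true that $k^{-1}aha^{-1}k$ must \emph{equal} $bhb^{-1}$ for some $b\in B$. For example, with $b_0,b_1\in B$ distinct, the element $(b_0hb_0^{-1})(b_1hb_1^{-1})(b_0hb_0^{-1})^{-1}$ lies in $\Psi(B)$ and is $G*H$-conjugate to $h$, but its $G*H$-syllable length exceeds $3$, so it equals no $bhb^{-1}$. What the normal-form analysis actually gives is conjugacy \emph{within} $\Psi(B)=*_{b\in B}\,bHb^{-1}$: writing $k^{-1}aha^{-1}k = u w_0 u^{-1}$ with $u\in\Psi(B)$ and $w_0$ cyclically reduced in $\Psi(B)$, one checks that if $w_0$ has $\Psi(B)$-syllable length $\geq 2$ then its $G*H$-cyclic reduction has syllable length $\geq 4$, which is incompatible with being conjugate to $h$; hence $w_0=bh^{j}b^{-1}$ for some $b\in B$, and $j=1$ because $h^j$ and $h$ are $G*H$-conjugate only if equal (as $H$ is abelian). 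So the correct conclusion is $k=ah^{-\ell}b^{-1}u^{-1}$ with the extra factor $u\in\Psi(B)$. Fortunately this factor is harmless: since $u$ normalizes $\Psi(B)$, the computation $k\Psi(B)k^{-1}=(ab^{-1})\Psi(B)(ab^{-1})^{-1}$ still goes through, which is exactly how the paper absorbs its corresponding factor $u_i\in K(B)$. With that one-line correction your argument is sound and, in my view, more direct than the preserved-occurrence counting in the paper.
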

\begin{proof}
Let $h\in H$ be a generator of $H$.  We define the map $K\from\Omega(G)\to\Sg(G*H)$ by
$$ K(A)=\<xhx^{-1} \colon x\in A\>. $$
This map is Borel, so we need only check that it is a reduction from $\qlt_t^{G}\restriction\Omega(G)$ to $\qlt_c^{G*H}\restriction\Gamma(G*H)$.  We will make use of the observation that $K(A)=\underset{g\in A}{*}\, gHg^{-1}$.

If $A,B\in\mc P(G)$, then clearly $K(A\cap B)\subseteq K(A)\cap K(B)$.  We will show that $K(A)\cap K(B)\subseteq K(A\cap B)$, so $K(A\nolinebreak\cap\nolinebreak B)\nolinebreak=\nolinebreak K(A)\cap K(B)$.  Suppose that \mbox{$g\in K(A)\cap K(B)$}, and so can be written both as $g=x_1 h x_1^{-1}\ldots x_n h x_n^{-1}$ with $x_1,\ldots,x_n\in A$ and as $g=y_1 h y_1^{-1}\ldots y_m h y_m^{-1}$ with $y_1,\ldots,y_m\in B$.  Then clearly $x_1=y_1$, and so multiplying $g$ on the left by $y_1 h^{-1} y_1^{-1}=x_1 h^{-1} x_1^{-1}$ we find that
$$x_2 h x_2^{-1}\ldots x_n h x_n^{-1}=y_2 h y_2^{-1}\ldots y_m h y_m^{-1}. $$
Thus $x_2=y_2$, and repeating this argument we find that $x_i=y_i$ for \mbox{$1\leq i\leq\min\{n,m\}$}.  If for example $m<n$, then we would have the equation
$$x_{m+1} h x_{m+1}^{-1}\ldots x_n h x_n^{-1}=e$$
which is impossible.  Similarly it cannot be that $n<m$.  Thus $m=n$, and it follows that $g\in K(A\cap B)$.

Also note that if $g\in G$, then $K(gA)=g K(A) g^{-1}$.  Thus
\begin{align*}
K(g_1 A\cap\ldots\cap g_n A) &= K(g_1 A)\cap\ldots\cap K(g_n A) \\
 &=g_1 K(A) g_1^{-1} \cap \ldots \cap g_n K(A) g_n^{-1}
\end{align*}

Suppose that $A,B\in\mc P(G)$ and that $A\qlt_t^G B$, i.e.\ there exist $g_1,\ldots,g_n\in G$ such that $A=g_1 B\cap\ldots\cap g_n B$.  Then
$$K(A)=K(g_1 B \cap \ldots \cap g_n B)=g_1 K(B) g_1^{-1} \cap \ldots \cap g_n K(B) g_n^{-1}. $$
Thus $A\qlt_t^G B$ implies that $K(A) \qlt_c^{G*H} K(B)$.

Next, suppose that $A,B\in\mc P(G)$ and that $K(A)\qlt_c^{G*H} K(B)$, so there exist $\gamma_1,\ldots,\gamma_n\in G*H$ such that $K(A)=\gamma_1 K(B) \gamma_1^{-1} \cap \ldots \cap \gamma_n K(B) \gamma_n^{-1}$.  For each $x\in A$ and $1\leq i\leq n$, let $w_{x,i}\in K(B)$ be the element such that $xhx^{-1}=\gamma_i w_{x,i} \gamma_i^{-1}$.  Clearly for each $i=1,\ldots,n$ the map $x\mapsto w_{x,i}$ is an injection.

Note that $xhx^{-1}$ is a reduced word in $G*H$.  For $1\leq i\leq n$, we may assume that $\gamma_i$ is a reduced word in $G*H$, and that $w_{x,i}\in K(B)$ can be written as
$$w_{x,i} = z_1 h^{\epsilon_1} z_1^{-1}\ldots z_k h^{\epsilon_k} z_k^{-1} \,\,(z_j\in B, \epsilon_j\in\{\pm 1\}). $$
If we reduce this word, then we obtain that
$$w_{x,i}=u_1 h^{m_1} u_2 h^{m_2}\ldots u_t h^{m_t} u_{t+1}$$
where $m_j\in\Z\setminus\{0\}$, $u_j\in G$ and the product $u_1u_2\ldots u_j\in B$ for $1\leq j\leq t+1$.  Furthermore, $w_{x,i}$ is never the trivial word.

The equation $xhx^{-1}=\gamma_i w_{x,i} \gamma_i^{-1}$ implies that starting with the right-hand side, there is a cancellation procedure which eventually leads to the left-hand side.  In any such procedure, there must be some occurrence of $h$ in the right-hand side which is never cancelled.  We call this the preserved occurrence of $h$.  Let $\Delta_i\subseteq A$ be the set of elements $x\in A$ for which the preserved occurrence of $h$ in some cancellation procedure is in the original expression for $w_{x,i}$.

We claim that $A\setminus\Delta_i$ is finite for each $1\leq i\leq n$.  If $x\in A\setminus\Delta_i$, then the preserved occurrence of $h$ is in either $\gamma_i$ or $\gamma_i^{-1}$.  Suppose that $x_1,x_2\in A\setminus\Delta_i$ are both words such that the preserved occurrence of $h$ is in $\gamma_i$.  Then the preserved occurrence of $h$ must be the first $h$ in $\gamma_i$, since $\gamma_i$ is assumed to be reduced.  Thus $\gamma_i=k h u$ for some $k\in G,u\in G*H$, and this gives us the two equations
\begin{align*}
x_1 h x_1^{-1} = khuw_{x_1,i}\gamma_i^{-1} \\
x_2 h x_2^{-1} = khuw_{x_2,i}\gamma_i^{-1}
\end{align*}
which implies that $x_1=k=x_2$.  Thus there is at most one element in $A\setminus\Delta_i$ such that the preserved occurrence of $h$ is in $\gamma_i$.  A similar argument shows that there is at most one element in $A\setminus\Delta_i$ such that the preserved occurrence of $h$ is in $\gamma_i^{-1}$.  So $|A\setminus\Delta_i|\leq 2$ for $1\leq i\leq n$.

As $A$ is infinite, this implies that $\Delta=\cap_{1\leq i\leq n} \Delta_i$ is infinite.  If we fix some $x_0\in\Delta$, then for each $i=1,\ldots,n$, we can write
\begin{align*}
x_0 hx_0^{-1} &=\gamma_i w_{x_0,i} \gamma_i^{-1} \\
 &=\gamma_i u_i (z_i h z_i^{-1}) v_i \gamma_i^{-1}
\end{align*}
with $z_i\in B$, $u_i,v_i\in K(B)$, and the displayed $h$ is the preserved occurrence in some cancellation procedure.  This implies that $x_0 = \gamma_i u_i z_i$, and $x_0^{-1}=z_i^{-1} v_i \gamma_i^{-1}$.  Let $\beta_i=x_0 z_i^{-1}\in G$.  Then $\gamma_i=\beta_i u_i^{-1}$.  Thus
\begin{align*}
K(A) &= \beta_1 u_1^{-1} K(B) u_1 \beta_1^{-1} \cap \ldots \cap \beta_n u_n^{-1} K(B) u_n \beta_n^{-1} \\
 &= \beta_1 K(B) \beta_1^{-1} \cap \ldots \cap \beta_n K(B) \beta_n^{-1} \\
 &= K(\beta_1 B \cap \ldots \cap \beta_n B)
\end{align*}
and so $A=\beta_1 B \cap \ldots \cap \beta_n B$, with each $\beta_i\in G$.  Thus $A\qlt_t^G B$, as desired.
\end{proof}

The following result is an immediate consequence of Theorem \ref{thm:ConjUniversal}.

\begin{cor}
If $n\geq 3$, then $\qlt_c^{\F_n}\restriction\Gamma(\F_n)$ is a universal countable Borel quasi-order.
\end{cor}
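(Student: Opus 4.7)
The plan is to apply Theorem \ref{thm:ConjUniversal} directly by finding a decomposition of $\F_n$ as a free product $G * H$ satisfying the hypotheses of that theorem. First I would observe that for $n\geq 2$, there is a canonical isomorphism $\F_n \cong \F_{n-1} * \Z$, obtained by splitting the free generators of $\F_n$ into the first $n-1$ generators and the last one.

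Next, I would verify the hypotheses of Theorem \ref{thm:ConjUniversal} with $G = \F_{n-1}$ and $H = \Z$. Since $n\geq 3$, we have $n-1\geq 2$, so $G = \F_{n-1}$ either is $\F_2$ (when $n=3$) or contains $\F_2$ as a subgroup (when $n>3$); in either case $G$ contains a nonabelian free subgroup. The group $H = \Z$ is nontrivial cyclic. Therefore Theorem \ref{thm:ConjUniversal} applies, and we conclude that $\qlt_c^{G*H}\restriction\Gamma(G*H) = \qlt_c^{\F_n}\restriction\Gamma(\F_n)$ is a universal countable Borel quasi-order.

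There is no real obstacle here; the corollary is simply the observation that $\F_n$ for $n\geq 3$ has the required free-product structure, so Theorem \ref{thm:ConjUniversal} applies off the shelf. One minor point to keep in mind is that the decomposition $\F_n \cong \F_{n-1} * \Z$ is genuinely a free product in the categorical sense (not merely an internal decomposition), which is precisely how $\F_n$ is constructed from its generators, so this requires no additional justification.
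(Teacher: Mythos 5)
Your proof is correct and is exactly the reasoning the paper has in mind: write $\F_n\cong\F_{n-1}*\Z$ with $n-1\geq 2$, note $\F_{n-1}$ contains a nonabelian free subgroup and $\Z$ is nontrivial cyclic, and invoke Theorem \ref{thm:ConjUniversal}. The paper states the corollary as an immediate consequence without writing out the details, so this matches.
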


Finally, the proof of the following result is a straightforward adaptation of the proof of Proposition 1 of Thomas-Velickovic \cite{TV99}.

\begin{cor}
$\qlt_c^{\F_2}\restriction\Gamma(\F_2)$ is a universal countable Borel quasi-order.
\end{cor}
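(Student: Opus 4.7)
By the preceding corollary, $\qlt_c^{\F_3}\restriction\Gamma(\F_3)$ is a universal countable Borel quasi-order, so it suffices to produce a Borel reduction from $\qlt_c^{\F_3}\restriction\Gamma(\F_3)$ to $\qlt_c^{\F_2}\restriction\Gamma(\F_2)$. Following the approach of Thomas--Velickovic, I would fix an embedding $\phi\from\F_3\into\F_2$ whose image $H=\phi(\F_3)$ is a \emph{malnormal} subgroup of $\F_2$, meaning that $gHg^{-1}\cap H=\{e\}$ for every $g\in\F_2\setminus H$. It is a classical fact about free groups that such malnormal embeddings of $\F_3$ into $\F_2$ exist, so I will take this as input. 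Define the Borel map $\Phi\from\Gamma(\F_3)\to\Gamma(\F_2)$ by $\Phi(A)=\phi(A)$; note that since $\phi$ is injective, $\Phi$ commutes with finite intersections of subsets.

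For the forward direction, suppose $A=g_1Bg_1^{-1}\cap\ldots\cap g_nBg_n^{-1}$ in $\F_3$. Since $\phi$ is an injective homomorphism, it preserves conjugation and commutes with finite intersections of subsets, so
\[\Phi(A)=\phi(g_1)\Phi(B)\phi(g_1)^{-1}\cap\ldots\cap\phi(g_n)\Phi(B)\phi(g_n)^{-1},\]
whence $\Phi(A)\qlt_c^{\F_2}\Phi(B)$. For the reverse direction, suppose that
\[\Phi(A)=\gamma_1\Phi(B)\gamma_1^{-1}\cap\ldots\cap\gamma_n\Phi(B)\gamma_n^{-1}\]
for some $\gamma_1,\ldots,\gamma_n\in\F_2$. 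Then for each $i$, $\Phi(A)\subseteq H\cap\gamma_iH\gamma_i^{-1}$; since $A\in\Gamma(\F_3)$ is infinite, so is $\Phi(A)$, and malnormality of $H$ forces $\gamma_i\in H$. Writing $\gamma_i=\phi(h_i)$ with $h_i\in\F_3$ and applying injectivity of $\phi$ once more, we obtain $A=h_1Bh_1^{-1}\cap\ldots\cap h_nBh_n^{-1}$ in $\F_3$, so $A\qlt_c^{\F_3}B$.

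The main obstacle is the group-theoretic input: producing an embedding $\phi\from\F_3\into\F_2$ whose image is malnormal in $\F_2$ (or, alternatively, verifying malnormality for an explicit embedding such as $a_i\mapsto b^i a b^{-i}$ on a suitable finite set of generators). Once this is available, the remainder of the proof is essentially a direct translation of the Thomas--Velickovic argument for $E_c(\F_2)$ to the quasi-order setting, the only adjustment being that one must keep track of the finite tuple $\gamma_1,\ldots,\gamma_n$ of conjugators simultaneously rather than a single one.
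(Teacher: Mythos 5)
Your proposal is correct and takes essentially the same approach as the paper: both reduce $\qlt_c^{\F_3}\restriction\Gamma(\F_3)$ to $\qlt_c^{\F_2}\restriction\Gamma(\F_2)$ via a malnormal embedding of $\F_3$ into $\F_2$, using malnormality to force the conjugators $\gamma_i$ into the image subgroup $H$ (this is precisely the adaptation of Thomas--Velickovic's Proposition 1 that the paper invokes). The paper states this quite tersely, while you have spelled out the verification, but the idea and the key group-theoretic input are identical.
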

\begin{proof}
Recall that a subgroup $H$ of a group $G$ is said to be malnormal if $gHg^{-1}\cap\nolinebreak H=\nolinebreak\{1\}$ for all $g\in G\setminus H$, and that $\F_3$ can be embedded as a malnormal subgroup of $\F_2$.  Arguing as in Corollary \ref{cor:TranslateUniversal}, we see this embedding induces a Borel reduction from $\qlt_c^{\F_3}\restriction\Gamma(\F_3)$ to $\qlt_c^{\F_2}\restriction\Gamma(\F_2)$.
\end{proof}

\section{Embeddability of countable groups}\label{sec:CtblGroupEmbed}

Our ultimate goal is to show that embeddability of finitely generated groups is a universal countable Borel quasi-order.  The techniques we will use in the proof are easier to understand in the more general setting of arbitrary countable groups.  With this in mind, we first turn our attention to the embeddability relation for countable groups, $\sqsubseteq_{Gp}$.  By removing the restriction that the groups we work with should be finitely generated, we are allowed more freedom with regards to how we construct groups for our Borel reduction.  At the same time, removing this restriction means that $\sqsubseteq_{Gp}$ is an analytic quasi-order, rather than a countable Borel quasi-order.  We will prove the following result.

\begin{theorem}\label{thm:EmGroupsUniversal}
$\sqsubseteq_{Gp}$ is a universal analytic quasi-order.
\end{theorem}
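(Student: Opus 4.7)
My plan is to produce a Borel reduction from a known universal analytic quasi-order to $\sqsubseteq_{Gp}$. By Louveau and Rosendal \cite{LR05}, the embeddability relation $\sqsubseteq_{Gr}$ on countable (say, connected) graphs on $\N$ is a universal analytic quasi-order, so it suffices to construct a Borel map $G \mapsto \Gamma(G)$ from countable graphs to countable groups satisfying
\[
G \sqsubseteq_{Gr} H \LongIff \Gamma(G) \sqsubseteq_{Gp} \Gamma(H).
\]
The construction will use small cancellation theory, specifically the $C'(1/6)$ condition, which gives very strong control over finite subgroups, conjugacy, and homomorphisms of the resulting groups.

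The group $\Gamma(G)$ will be built as follows. Fix a large prime $p$ and two auxiliary letters $a,b$. Take one generator $x_v$ for each vertex $v \in V(G)$, and impose two families of relators: (i) marker relators forcing each $x_v$ to satisfy $x_v^p = 1$, and (ii) for each edge $\{v,w\} \in E(G)$ an edge relator $r_{v,w}$ which is a long positive word in $a,b,x_v,x_w$ built so that the total family of relators satisfies $C'(1/6)$ uniformly in $G$. (A standard template of the form $a b^{n_1} x_v a b^{n_2} x_w a b^{n_3} \cdots$ with carefully chosen exponents works.) The forward direction is then essentially immediate: an embedding of graphs $V(G) \hookrightarrow V(H)$ extending the edge relation gives a map on generators that carries each relator of $\Gamma(G)$ to a relator of $\Gamma(H)$, hence induces a homomorphism, and injectivity follows from small cancellation normal forms.

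The main obstacle is the reverse direction, which requires showing that an arbitrary abstract group embedding $\varphi : \Gamma(G) \hookrightarrow \Gamma(H)$ must have essentially combinatorial structure determined by a graph embedding. Here I would invoke the classical torsion theorem for $C'(1/6)$ groups (Lyndon--Schupp): every element of finite order in $\Gamma(H)$ is conjugate to a power of some marker relator $x_u^p$, hence to a power of a canonical generator $x_u$. Since each $x_v$ has order exactly $p$ in $\Gamma(G)$, the image $\varphi(x_v)$ is conjugate in $\Gamma(H)$ to a power $x_{u(v)}^{k_v}$; a separate argument (using that the centralizer of $x_u$ in $\Gamma(H)$ is cyclic of order $p$, a further consequence of small cancellation) ensures that $u(v)$ and $k_v$ can be chosen canonically, giving a well-defined map $v \mapsto u(v)$ on vertices. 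The edge relators of $\Gamma(G)$ are then mapped, under $\varphi$, to words that are trivial in $\Gamma(H)$; by Greendlinger's Lemma any such word must be a product of conjugates of edge relators of $\Gamma(H)$, and the arithmetic of the exponents in the template forces $\{u(v), u(w)\}$ to be an edge of $H$ whenever $\{v,w\}$ is an edge of $G$. Thus $v \mapsto u(v)$ is a graph embedding, completing the reduction.

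The hard part is designing the edge-relator template so that the $C'(1/6)$ condition holds uniformly, so that the vertex generators are rigidly distinguishable in $\Gamma(H)$, and so that no ``unexpected'' embeddings of $\Gamma(G)$ into $\Gamma(H)$ arise from products of conjugates that collapse; once this is arranged, the Borel-ness of $G \mapsto \Gamma(G)$ is transparent because the relators depend Borel-measurably on the edge set of $G$. The full embeddability equivalence then follows from combining the two directions.
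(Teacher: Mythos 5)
Your overall strategy---reduce from graph embeddability $\sqsubseteq_{\mathcal C}$ using small cancellation groups, with torsion-theoretic arguments to recover the vertex map from an abstract group embedding---is indeed the approach the paper takes. However, there is a genuine gap in your construction: you only introduce edge relators $r_{v,w}$ for pairs $\{v,w\}$ that \emph{are} edges of $G$, and nothing for non-edges. Recall that $\sqsubseteq_{\mathcal C}$ is \emph{induced-subgraph} embeddability: $f$ must satisfy $(v,w)\in E(G) \Leftrightarrow (f(v),f(w))\in E(H)$, an \emph{iff}. Your construction can only detect the forward implication. If $\{v,w\}$ is a non-edge of $G$, then $\Gamma(G)$ places no constraint whatsoever on the pair $x_v, x_w$, so a group embedding $\Gamma(G)\hookrightarrow\Gamma(H)$ is entirely free to send $x_v, x_w$ to (conjugates of powers of) generators $x_{u(v)}, x_{u(w)}$ that \emph{are} joined by an edge in $H$. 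Concretely, take $G$ to be two isolated vertices and $H$ the same two vertices with an edge: then $\Gamma(G) \cong \mathbb F_2 * \Z_p * \Z_p$ (no edge relator), and by standard small-cancellation/Freiheitssatz-type arguments one can locate a copy of $\mathbb F_2 * \Z_p * \Z_p$ inside $\Gamma(H)$ (e.g.\ using conjugates $a x_1 a^{-1}$, $a^2 x_2 a^{-2}$ whose products never spell more than a sixth of $r_{1,2}$), so $\Gamma(G)\hookrightarrow\Gamma(H)$ even though $G\not\sqsubseteq_{\mathcal C}H$. Your centralizer/Greendlinger argument in the reverse direction shows only that edges go to edges, not that non-edges go to non-edges; there is nothing available to rule out the latter failure.

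The paper's fix is to attach a relator to \emph{every} ordered pair of distinct vertices, with the exponent encoding edge vs.\ non-edge: relators $v_i^7$ for each vertex, plus $(v_i v_j)^{11}$ if $(v_i,v_j)$ is an edge and $(v_i v_j)^{13}$ if it is not. This way the group $G_T$ ``remembers'' the full edge relation through the finite orders of the products $v_iv_j$ (11 vs.\ 13). The reverse direction then goes through: the torsion theorem forces $\theta(v_0 v_j)$ to be (up to inner automorphism) of the form $t_0^{\pm 1}t_j^{\pm 1}$, and since orders are preserved by an embedding, the order of $v_0 v_j$ (which encodes edge/non-edge in $S$) must equal the order of $t_0^{\pm 1}t_j^{\pm 1}$ (which encodes edge/non-edge in $T$), giving the required iff. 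A secondary point: the paper gets away without any auxiliary letters $a,b$; the relators $(v_iv_j)^k$ involving only the vertex generators are simpler to analyze and make the forward-direction injectivity argument cleaner, since one can directly track which relators a word $f(\alpha)$ can possibly contain more than half of. Your proposal is salvageable by adding non-edge relators with a distinct exponent, but as written it does not prove the theorem.
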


\begin{cor}
The bi-embeddability relation for countable groups $\equiv_{Gp}$ is a universal analytic equivalence relation.
\end{cor}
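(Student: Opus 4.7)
The plan is to reduce a universal analytic quasi-order to $\sqsubseteq_{Gp}$ by associating, to each combinatorial object in the source, a countable group built via small cancellation. A natural source is the embeddability quasi-order on trees on $\N$ (or, equivalently, embeddability on countable graphs), which is a universal analytic quasi-order by results of Louveau--Rosendal \cite{LR05}.

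For each $T \in Tr(\N)$, I would define a countable group $G_T$ by a presentation $\langle X_T \mid R_T \rangle$, where $X_T$ contains a distinguished letter $a$ together with one generator $x_\sigma$ for each node $\sigma \in T$, and $R_T$ is a family of long positive words, one per node, encoding the immediate-successor structure at that node. A representative choice is
$$ r_\sigma \;=\; x_\sigma\, a^{N_\sigma}\, x_{\sigma_0}\, a^{N_\sigma+1}\, x_{\sigma_1}\, a^{N_\sigma+2}\cdots $$
where $\sigma_0,\sigma_1,\ldots$ enumerate the immediate successors of $\sigma$ in $T$, and the exponents $N_\sigma$ grow fast enough with $\sigma$ that $R_T$ satisfies a strong small cancellation condition such as $C'(1/6)$, uniformly in $T$. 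This ensures $G_T$ is torsion-free, and gives us the full strength of the usual small cancellation toolkit (Greendlinger's lemma, uniqueness of roots, malnormality or almost-malnormality of cyclic subgroups, control of centralizers).

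For the forward direction, any tree embedding $\iota : T \hookrightarrow T'$ induces a homomorphism $\phi_\iota : G_T \to G_{T'}$ by $a \mapsto a$ and $x_\sigma \mapsto x_{\iota(\sigma)}$. Because $\iota$ preserves the immediate-successor structure, the image of each $r_\sigma$ is precisely the corresponding relator in $G_{T'}$, so $\phi_\iota$ is a well defined homomorphism; injectivity then follows from a standard Dehn-reduction argument on the small cancellation presentation, since any nontrivial word in the generators of $G_T$ whose image vanished in $G_{T'}$ would force an impossible reduction in $G_T$ itself.

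The main obstacle, and the technical heart of the argument, is the backward direction. Given an arbitrary abstract embedding $\phi : G_T \hookrightarrow G_{T'}$, I need to extract a genuine tree embedding $T \hookrightarrow T'$. The plan is to use the small cancellation machinery to analyze $\phi(a)$ and each $\phi(x_\sigma)$: one first shows that these elements have constrained conjugacy classes (typically, each $\phi(x_\sigma)$ must be a conjugate of some $x_{\iota(\sigma)}$, with $\phi(a)$ normalized analogously), then one uses the relators $r_\sigma$ to show that the conjugating elements can be chosen compatibly, yielding a well defined map $\iota$ on nodes that is injective and preserves parent--child relations, i.e.\ is a tree embedding. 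The delicate step is engineering the relators $r_\sigma$ --- in particular the exponents $N_\sigma$ --- so that no accidental algebraic coincidences in $G_{T'}$ can disrupt this recovery; this is where the careful small cancellation setup is indispensable, and it is the step most likely to require the most detailed combinatorial work.
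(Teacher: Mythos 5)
The paper derives this corollary in one line from Theorem \ref{thm:EmGroupsUniversal}: if $Q$ is a universal analytic quasi-order then its symmetrization $E_Q$ is a universal analytic equivalence relation (any reduction of quasi-orders is automatically a reduction of the associated equivalence relations), so the corollary has no independent content once the theorem is in hand.  You have instead sketched a new proof of the theorem with a genuinely different construction, but the forward direction of your reduction is broken.

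Your relator $r_\sigma = x_\sigma a^{N_\sigma} x_{\sigma_0} a^{N_\sigma+1} x_{\sigma_1}\cdots$ hard-codes a particular enumeration of the immediate successors of $\sigma$ and a sequence of exponents $N_\sigma, N_\sigma+1,\ldots$ depending on both $\sigma$ and the index of the child.  A tree embedding $\iota\from T\into T'$, in any of the senses for which tree embeddability is a universal analytic quasi-order, need not carry the $i$-th child of $\sigma$ to the $i$-th child of $\iota(\sigma)$ (it is not onto the set of children of $\iota(\sigma)$, and need not even send immediate successors to immediate successors), and there is no reason for $N_\sigma = N_{\iota(\sigma)}$.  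Consequently $\phi_\iota(r_\sigma)$ is a different reduced word from $r_{\iota(\sigma)}$; under your $C'(1/6)$ hypothesis on $R_{T'}$, Greendlinger's lemma shows that such a word is generally nontrivial in $G_{T'}$, so the prescription $a\mapsto a$, $x_\sigma\mapsto x_{\iota(\sigma)}$ fails to even define a homomorphism.  There is also a secondary problem: a node of a tree on $\N$ may have infinitely many immediate successors, so $r_\sigma$ as written can be an infinite word, which is not a legal relator.  The paper's construction of Theorem \ref{thm:EmGroupsUniversal} sidesteps both difficulties by working with graph embeddability and by attaching relators to \emph{pairs} of generators, namely $v_i^7$, $(v_iv_j)^{11}$ if $(v_i,v_j)\in T$, and $(v_iv_j)^{13}$ if $(v_i,v_j)\notin T$; the binary edge/non-edge datum that a graph embedding preserves is exactly what the relators record, so the induced map on generators sends relators to relators and the small cancellation machinery gives injectivity.
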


This is in contrast with the isomorphism relation for countable groups $\cong_{Gp}$, which is known to be universal among all analytic equivalence relations induced by a Borel action of $S_\infty$.  (This is due to Mekler in \cite{Mek81}.)  However, such equivalence relations are known not to be universal among all analytic equivalence relations.

Before we prove Theorem \ref{thm:EmGroupsUniversal}, we need to make a few definitions.  We will write $\mc C$ for the set of countable graphs whose vertex set is $\N$.  By identifying each graph with its edge relation, we see that $\mc C$ is a closed subset of $2^{\N^2}$ and so is a Polish space.

\begin{definition}
If $S,T\in \mc C$, then we write $S \sqsubseteq_{\mc C} T$ if $S$ embeds into $T$, i.e.\ there exists $f\from \N\to \N$ such that for all $m,n\in\N$, $(m,n)\in S \Iff (f(m),f(n))\in T$.
\end{definition}

In \cite{LR05}, it was shown that $\sqsubseteq_{\mc C}$ is a universal analytic quasi-order.  Thus to show that $\sqsubseteq_{Gp}$ is universal, we need only show that $\sqsubseteq_{\mc C}$ Borel reduces to it.  To do this, we will use small cancellation techniques to create groups that encode the edge relations of graphs.  We recall the following definitions and theorems from small cancellation theory.  (See Chapter V of \cite{LS01} for a fuller treatment of small cancellation theory.)

\begin{definition}
Let $R$ be a subset of a free group $F$.  We say $R$ is \textit{symmetrized} if every element of $R$ is cyclically reduced and whenever $r\in R$, all cyclic permutations of $r$ and $r^{-1}$ are in $R$.
\end{definition}

\begin{theorem}\label{thm:wordproblem}[Theorem V.4.4 in \cite{LS01}]
Let $F$ be a free group.  Let $R$ be a symmetrized subset of $F$ and $N$ its normal closure.  If $R$ satisfies $C'(\lambda)$ for some $\lambda\leq 1/6$, then every non-trivial element $w\in N$ contains a subword $s$ of some $r\in R$ with $|s|>(1-3\lambda)|r|\geq \frac{1}{2}|r|$.
\end{theorem}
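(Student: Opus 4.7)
The standard approach is via van Kampen diagrams. Any non-trivial $w\in N$, which I assume freely reduced, admits a van Kampen diagram $D$ over the presentation $\langle F\mid R\rangle$: a finite, simply-connected planar $2$-complex whose $2$-cells (regions) are labelled by cyclic conjugates of elements of $R$ and whose boundary cycle, read off, freely equals $w$. I would fix such a $D$ minimising the number of regions; the minimum exists since $w\neq 1$, and minimality forces $D$ to be \emph{reduced}, in the sense that no two regions meeting along an edge carry mutually inverse boundary labels, because otherwise the pair could be cancelled to produce a strictly smaller diagram.

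If $D$ has a single region, then $\partial D$ reads a cyclic conjugate of some $r\in R$, so after accounting for any free reduction on $\partial D$ the word $w$ itself is essentially $r$ and the conclusion is immediate with $s$ taken to be $w$. Henceforth assume $D$ has at least two regions. Since $D$ is reduced and $R$ is symmetrised, every maximal arc of interior edges appearing on the boundary of a region $\Delta$ (with label $r_\Delta\in R$) is a \emph{piece}, i.e. a common initial segment of two distinct elements of $R$. The $C'(\lambda)$ hypothesis therefore bounds the length of every such interior arc by $\lambda|r_\Delta|$.

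The crux is the classical combinatorial Greendlinger lemma: in a reduced van Kampen diagram over a $C'(\lambda)$ presentation with $\lambda\leq 1/6$ and at least two regions, some \emph{boundary region} $\Delta$ has $\partial\Delta$ of the form $\sigma\pi_1\cdots\pi_k$, where $\sigma$ is a single simple arc lying on $\partial D$, each $\pi_i$ is an interior piece, and $k\leq 3$. Granted this, the piece bound yields $\sum_i |\pi_i| < 3\lambda|r_\Delta|$, hence $|\sigma|>(1-3\lambda)|r_\Delta|\geq \frac{1}{2}|r_\Delta|$ since $\lambda\leq 1/6$. Reading $\sigma$ along $\partial\Delta$ exhibits it as a subword of a cyclic conjugate of $r_\Delta$, and therefore of some $r\in R$ by symmetrisation of $R$; reading it along $\partial D$ exhibits it as a subword of $w$. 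Setting $s:=\sigma$ delivers the theorem.

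To establish the combinatorial lemma, I would run a combinatorial Gauss--Bonnet / Euler characteristic argument on the planar $1$-skeleton of $D$. The condition $C'(\lambda)$ with $\lambda\leq 1/6$ implies that each interior region has at least $\lceil 1/\lambda\rceil\geq 7$ piece-arcs along its boundary, contributing negative combinatorial curvature; since $D$ is a simply connected planar complex with $V-E+F=1$, the positive curvature must concentrate on boundary regions, and a careful case analysis then produces a boundary region with at most three interior arcs. The main obstacle is precisely this curvature accounting: one must correctly collapse degree-$2$ vertices into maximal arcs, handle boundary regions meeting $\partial D$ in several disjoint arcs (whose boundary is not literally of the simple form $\sigma\pi_1\cdots\pi_k$ and forces an auxiliary induction on the number of such arcs), and verify that minimality of $D$ together with symmetrisation of $R$ really does guarantee that every maximal interior arc is a piece. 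The reconciliation of $\partial D$ with its free reduction $w$ also has to be performed carefully so that the arc $\sigma$ survives into $w$ as a contiguous subword.
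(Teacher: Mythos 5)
The paper does not prove this statement; it is imported verbatim as Theorem~V.4.4 of Lyndon--Schupp \cite{LS01} and used as a black box. Your sketch is precisely the proof given in that reference: a minimal (hence reduced) van Kampen diagram, the one-region base case, and a combinatorial Gauss--Bonnet argument producing a boundary region with a single exterior arc and at most three interior piece-arcs, from which $C'(\lambda)$ yields $|\sigma|>(1-3\lambda)|r|$. Your outline is sound, and you are right that the substance lives in the combinatorial Greendlinger lemma you defer --- the collapse of degree-two vertices into maximal arcs, the auxiliary induction over boundary regions meeting $\partial D$ in several disjoint arcs, and the check that reducedness plus symmetrisation turns interior arcs into pieces are exactly the points Lyndon--Schupp spend the most effort on.
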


\begin{theorem}\label{thm:torsion}[Theorem V.10.1 in \cite{LS01}]
Suppose that $G=\<x_1,x_2,\ldots \mid R\>$ is such that $R$ is a symmetrized subset of $\<x_1,x_2,\ldots\>$ satisfying the $C'(1/6)$ small cancellation condition.  If $w$ represents a word of finite order in $G$, then there is some $r\in R$ of the form $r=v^n$ such that $w$ is conjugate to a power of $v$.
\end{theorem}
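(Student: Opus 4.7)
The plan is to follow the classical proof, due essentially to Lyndon, given in Section V.10 of Lyndon--Schupp. By replacing $w$ with a cyclically reduced conjugate, we may assume $w$ is itself cyclically reduced, since both the hypothesis and the conclusion are invariant under conjugation. Let $n \geq 2$ be the order of $w$ in $G$, so that $w^n = 1$ in $G$ but $w^k \neq 1$ for $1 \leq k < n$. Since $w^n$ lies in the normal closure of $R$, van Kampen's Lemma yields a reduced disk diagram $D$ over the presentation with boundary label $w^n$; choose $D$ of minimal area among all such diagrams.

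The core of the proof is to exploit the $n$-fold rotational symmetry of the boundary label. Divide $\partial D$ into $n$ arcs $\alpha_0, \ldots, \alpha_{n-1}$, each labeled $w$. The goal is to show that, possibly after replacing $D$ by another minimal diagram, the cyclic group $\Z/n\Z$ acts on $D$ by combinatorial isomorphisms permuting these arcs. One compares the $n$ sectors obtained by cutting $D$ along geodesic paths from a chosen interior basepoint out to the $\alpha_i$, and uses minimality together with the Greendlinger-type consequences of Theorem \ref{thm:wordproblem} to argue that the sectors may be taken to be mutually isomorphic. The resulting action must fix either an interior 2-cell, an interior edge, or a single interior vertex; the edge and bare-vertex cases are incompatible with the $C'(1/6)$ condition on a minimal positive-area diagram, leaving a fixed central 2-cell $\Pi$.

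The boundary label of $\Pi$ is some $r \in R$, and the order-$n$ rotation acts on $\partial \Pi$ by a nontrivial cyclic shift whose order divides $n$. Hence $r = v^m$ for a cyclically reduced word $v$, with $m$ a multiple of the shift order. Tracing any path in $D$ from the basepoint of $\partial D$ inward to $\partial \Pi$ then exhibits an element $g \in F$ such that $g^{-1} w g$ is equal in $F$ to a power of $v$, giving the conclusion.

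The step I expect to be the main obstacle is the symmetrization argument: a minimal-area disk diagram for $w^n$ need not a priori admit a $\Z/n\Z$-symmetry, and extracting a symmetric representative requires careful cut-and-paste applied to the $n$ rotated copies of $D$, together with the uniqueness properties of geodesic paths in small cancellation diagrams. Ruling out the degenerate possibility that the symmetry fixes only an interior edge or vertex, rather than a 2-cell, on a positive-area minimal diagram is where the full strength of the $C'(1/6)$ hypothesis and the sharp Greendlinger structural theory must be brought to bear.
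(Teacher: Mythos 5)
The paper does not actually prove this theorem: it is quoted as Theorem V.10.1 of Lyndon and Schupp \cite{LS01} and used as an external black box, so there is no proof in the paper for me to compare your sketch against. Nothing further is needed on the paper's side; the cited statement is standard small cancellation theory.

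As for your sketch on its own merits, it follows the spirit of the classical diagrammatic argument (minimal van Kampen diagram for $w^n$, exploit the $n$-fold periodicity of the boundary cycle, locate a distinguished interior $2$-cell whose label must be a proper power, read off a conjugator along a path to that cell), and you correctly flag the real crux yourself. But that crux is left as a stated goal rather than an argument: it is not automatic that a minimal-area diagram for $w^n$ admits, or can be replaced by one admitting, a combinatorial $\Z/n\Z$ action, and the cut-and-paste to manufacture one requires the finer structure theory of reduced $C'(1/6)$ diagrams (Greendlinger-type control over boundary regions and interior degrees), not merely the raw $C'(1/6)$ hypothesis. Likewise, ruling out the cases where the rotation fixes only an interior edge or a bare vertex needs an explicit reducedness argument (two $2$-cells identified across a fixed edge would violate reducedness; a free $\Z/n\Z$ action on the link of a fixed vertex must be shown incompatible with the interior-degree bounds), none of which is supplied. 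So as written this is an outline with an acknowledged gap rather than a proof, which is fine given that the paper treats the result as a citation.
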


Note that this implies that if furthermore $w$ is cyclically reduced, then $w$ is a cyclic permutation of some $r\in R$ of the form $r=v^n$.  We will often refer to this consequence of Theorem \ref{thm:torsion}.

Now we can proceed to the proof of Theorem \ref{thm:EmGroupsUniversal}.

\begin{proof}[Proof of Theorem \ref{thm:EmGroupsUniversal}]

Let $T\in\mc C$ and let $v_0,v_1,\ldots$ be an enumeration of the vertices of $T$.  Then $G_T$ is defined to be the group with generators $v_0,v_1,\ldots$ and relators

\begin{itemize}
\item $v_i^7$ for all $i\in\N$
\item $(v_i v_j)^{11}$ if $(v_i,v_j)\in T$
\item $(v_i v_j)^{13}$ if $(v_i,v_j)\notin T$
\end{itemize}

Let $R_T$ be the symmetrization of the set of defining relations for $G_T$.  Note that if $T$ is any graph, then $R_T$ satisfies the $C'(1/6)$ condition.  Now suppose that $S,T\in\mc C$ are such that $S$ embeds into $T$, say via the map $f$.  Then $f$ extends to a group homomorphism from $G_S$ to $G_T$, as it sends the relations of $G_S$ to relations of $G_T$.

To see that $f$ is an embedding, let $\alpha=v_{i_1}^{k_1}v_{i_2}^{k_2}\ldots v_{i_n}^{k_n}$ be a word in the generators of $G_S$, so that $$f(\alpha)={f(v_{i_1})}^{k_1}{f(v_{i_2})}^{k_2}\ldots {f(v_{i_n})}^{k_n}$$
and suppose that $f(\alpha)=1$ in $G_T$.  Then by the $C'(1/6)$ condition, $f(\alpha)$ must contain more than $1/2$ of a relation in $R_T$.  Note that any such relation must involve only generators in the image of the graph embedding $f\from S\to T$.  Suppose that $f(\alpha)$ contains more than half of a relation of the form $f(v_i)^{\pm 7}$.  Since $f$ is one-to-one, this cannot happen unless $\alpha$ already contained more than half of $v_i^{\pm 7}$.

Suppose that $f(\alpha)$ contains more than $1/2$ of the relation $({f(v_i)} {f(v_j)})^k$, where the value of $k$ depends on whether or not $(f(v_i),f(v_j))\in T$.  Since
$$(v_i,v_j)\in S \Iff (f(v_i),f(v_j))\in T$$
it must be the case that $(v_i v_j)^k\in R_S$, and $\alpha$ already contained more than $1/2$ of $(v_i v_j)^k$.  Thus $f(\alpha)$ does not contain more than $1/2$ of a relation in $R_T$ unless $\alpha$ contains more than $1/2$ of the corresponding relation in $R_S$.  Since every nontrivial element in $G_S$ may be written as a word which does not contain more than $1/2$ of a relation in $R_S$, every nontrivial element in $G_S$ maps to a nontrivial element in $G_T$.  Thus if $S$ embeds into $T$, then $G_S$ embeds into $G_T$.

Conversely, suppose that $\theta\from G_S\to G_T$ is an embedding.  Let $v_0,v_1,\ldots$ enumerate the vertices of $S$.  By Theorem \ref{thm:torsion}, after adjusting the embedding $\theta$ by an inner automorphism of $G_T$ if necessary, we may assume $\theta(v_0)=t_0^k$ for some $k$ such that $|k|<7$, where $t_0$ is some vertex of $T$, since $\theta(v_0)$ must have order 7.  Let $v_j\neq v_0$ be some vertex of $S$.  Again by Theorem \ref{thm:torsion}, we find that $\theta(v_j)=u t_j^l u^{-1}$ for some $l$ such that $|l|<7$, where $u\in G_T$ and $t_j$ is some vertex of $T$.  Unfortunately we cannot eliminate $u$ by an inner automorphism without possibly changing the value of $\theta(v_0)$.  We may assume that $u$ is freely reduced and does not start with any power of $t_0$.  To see this, note that if $u$ began with $t_0^m$, then we would be able to follow $\theta$ by the inner automorphism corresponding to $t_0^m$ without changing the value of $\theta(v_0)$.  Thus $\theta(v_0 v_j)=t_0^k u t_j^l u^{-1}$ is cyclically reduced.  Since $v_0 v_j$ is a torsion element, so is $\theta(v_0 v_j)$.  By Theorem \ref{thm:torsion}, $\theta(v_0 v_j)$ must be a cyclic permutation of some $r\in R_T$.  It immediately follows that $u=1$, since no such words contain a mix of positive and negative powers.  Thus $\theta(v_0 v_j)=t_0^k t_j^l$.

From this we find that $t_0\neq t_j$, since otherwise $\theta(v_0 v_j)$ would have order 1 or 7, which is impossible since $\theta$ is an embedding and $v_0 v_j$ has order 11 or 13.  Again, by Theorem \ref{thm:torsion}, we find that $t_0^k t_j^l$ has finite order only if $k=l=\pm 1$.  As the orders of $v_0 v_j$ and $\theta(v_0 v_j)=t_0^{\pm 1} t_j^{\pm 1}$ are equal, we see that
$$(v_0,v_j)\in S \LongIff (t_0,t_j)\in T. $$

Let $v_m\neq v_n$ be arbitrary vertices in $S$.  Repeating the above argument with $v_0$ and $v_m$, as well as $v_0$ and $v_n$, we find there are inner automorphisms $\psi_1,\psi_2$ of $G_T$, corresponding to conjugating by suitable powers of $t_0$, such that $\psi_1(\theta(v_m))=t_m^{\pm 1}$ and $\psi_2(\theta(v_n))=t_n^{\pm 1}$, where $t_m\neq t_0$ and $t_n\neq t_0$.  A priori it may be the case that, for example, $\psi_1(\theta(v_n))=t_0^k t_n^{\pm 1} t_0^{-k}$, with $k\neq 0$. But then 
$$\psi_1(\theta(v_m v_n))=t_m^{\pm 1} t_0^k t_n^{\pm 1} t_0^{-k}$$
has infinite order, which is impossible.  Thus $\psi_1=\psi_2$, and so $\psi_1(\theta(v_m v_n))=t_m^{\pm 1} t_n^{\pm 1}$, and the above argument shows $t_m\neq t_n$ and that
$$(v_m,v_n)\in S \LongIff (t_m,t_n)\in T. $$
As $v_m$ and $v_n$ were arbitrary, the function $g\from S\to T$ defined by $g(v_i)=t_i$ for all $i\in\N$ is an embedding.  Thus $\sqsubseteq_{\mc G} \;\, \leq_B \;\, \sqsubseteq_{Gp}$, which establishes the result.
\end{proof}

\section{Embeddability of finitely generated groups}\label{sec:FGGroupEmbed}

We now turn our attention to the embeddability relation for finitely generated groups.

\begin{definition}
Let $\mc G$ denote the Polish space of finitely generated groups.  (See \cite{C00}.)  If ${\mbox A,B\in\mc G}$, then we write $A\qlt_{em} B$ if and only if there is a group embedding from $A$ into $B$.  We write $\equiv_{em}$ for the associated equivalence relation.
\end{definition}

It can be seen that $\qlt_{em}$ is a countable Borel quasi-order, as any finitely generated group contains only countably many finitely generated subgroups.  We will show that in fact it is universal by reducing $\qlt_2^{tree}$ to $\qlt_{em}$.  Given a tree $T\in Tr(2)$, our general strategy is to define a finitely generated group $G_T$ with subgroups corresponding to the trees $T_w$ for $w\in 2^{<\N}$.  We will start with two generators and then add relations to this group according to the nodes present in $T$.  As in the previous section, these additional relations will allow us to control the embeddings that exist between two of these groups and thus ensure that $T\mapsto G_T$ is a Borel reduction.  Thus we will have shown:

\begin{theorem}\label{thm:FGGroupEmbedUniversal}
$\qlt_{em}$ is a universal countable Borel quasi-order.
\end{theorem}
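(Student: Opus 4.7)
The plan is to construct a Borel reduction $T \mapsto G_T$ from $\qlt_2^{tree}$ to $\qlt_{em}$ by adapting the small-cancellation construction used in Theorem \ref{thm:EmGroupsUniversal}, now producing finitely generated groups. Let $\F_2 = \langle a, b \rangle$ and fix, once and for all, a family of cyclically reduced words $\{\rho_w\}_{w \in 2^{<\N}} \subseteq \F_2$. For each tree $T$, set $G_T = \langle a, b \mid R_T \rangle$, where
\[ R_T = \{\rho_w^{11} : w \in T\} \cup \{\rho_w^{13} : w \notin T\}. \]
The family $\rho_w$ should be chosen (for instance via nested images of ``sufficiently spread out'' endomorphisms of $\F_2$) so that the symmetrization of $R_T$ satisfies the $C'(1/6)$ condition uniformly for every tree $T$, making Theorems \ref{thm:wordproblem} and \ref{thm:torsion} applicable to each $G_T$.

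The key structural requirement is that the tree-shift $T \mapsto T_u$ be realized by a specific endomorphism of $\F_2$. To arrange this, I would pick injective endomorphisms $\phi_0, \phi_1 \from \F_2 \to \F_2$ whose images are free and in ``general position,'' define $\phi_{u_1 \cdots u_n} = \phi_{u_1} \circ \cdots \circ \phi_{u_n}$, and set $\rho_w = \phi_w(\rho_e)$ for a fixed base word $\rho_e \in \F_2$. Then $\phi_u(\rho_w) = \rho_{u^\frown w}$, and if $T = T'_u$ so that $w \in T \Leftrightarrow u^\frown w \in T'$, the endomorphism $\phi_u$ carries each relator of $R_T$ to the corresponding relator of $R_{T'}$. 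It therefore descends to a homomorphism $\bar{\phi}_u \from G_T \to G_{T'}$. Injectivity of $\bar{\phi}_u$ should follow from a small-cancellation word-problem argument: by Theorem \ref{thm:wordproblem}, if $\phi_u(x)$ lies in the normal closure of $R_{T'}$ then it contains more than half of some relator $\rho_{u^\frown w}^{p}$, and the nested structure of the $\phi_u$-images would let us trace this back to force $x$ itself to contain more than half of $\rho_w^p$ in $G_T$.

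The converse uses the rigidity provided by the small-cancellation hypothesis. Given an embedding $\theta \from G_T \to G_{T'}$, each $\rho_w$ is torsion of order 11 or 13 in $G_T$, so $\theta(\rho_w)$ is a torsion element of $G_{T'}$. By Theorem \ref{thm:torsion}, $\theta(\rho_w)$ is conjugate to a cyclic permutation of a power of some $\rho_{w'}$ occurring in $R_{T'}$, and matching orders forces $w \in T \Leftrightarrow w' \in T'$. A rigidity argument, using the recursive construction of the $\rho_w$ via the $\phi_u$, then shows that the assignment $w \mapsto w'$ must take the form $w \mapsto u^\frown w$ for a single $u$, giving $T = T'_u$ and hence $T \qlt_2^{tree} T'$. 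Borelness of $T \mapsto G_T$ is clear since the presentation is determined bitwise by $T$.

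The main obstacle is the simultaneous engineering of the $\rho_w$ and the $\phi_u$. They must be arranged so that (i) the $C'(1/6)$ condition holds across all trees $T$, (ii) the shift homomorphisms $\bar{\phi}_u$ are injective, and---most delicately---(iii) a single $u$ can be extracted from an arbitrary embedding $\theta$, ruling out ``twisted'' matchings $w \mapsto w'$ that are not of the form $w \mapsto u^\frown w$. This rigidity is the heart of the construction, and I expect it to require careful combinatorial bookkeeping more intricate than, but analogous to, the torsion analysis in the proof of Theorem \ref{thm:EmGroupsUniversal}.
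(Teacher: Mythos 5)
Your overall strategy---reduce from $\qlt_2^{tree}$, index small cancellation relators by tree nodes, and realize the tree shift by iterated endomorphisms $\phi_0, \phi_1$ of $\F_2$---is exactly the paper's, but the single-relator-per-node scheme $\rho_w = \phi_w(\rho_e)$ creates a genuine gap in the converse direction. With just one family $\{\rho_w\}$, at most one of the two generators of $\F_2$ is a torsion element of $G_T$ (and if $\rho_e$ is not itself a generator, then neither is). Extracting $u$ from an arbitrary embedding $\theta\from G_T \to G_{T'}$ requires applying Theorem \ref{thm:torsion} to $\theta(a)$ and $\theta(b)$ \emph{separately}, writing each, up to conjugacy, as a power of a relator base; that is the only handle the rigidity argument has. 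If, say, $b$ is torsion-free in $G_T$, then $\theta(b)$ is completely unconstrained by the torsion structure of $G_{T'}$, and the mere fact that each $\theta(\rho_w)$ is conjugate to some $\rho_{w'}^{\pm 1}$ does not force the assignment $w\mapsto w'$ to be a shift: the conjugators can vary with $w$, and there is no anchor on $\theta(b)$ to tie them together. This is precisely the step you flag as ``the heart of the construction,'' and with a single base word it cannot even get started.

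The paper avoids this by placing two relators at each node---one built from $x$ and one built from $y$---with four distinct prime exponents $59, 61, 67, 71$ so that the two families cannot be confused. Then $x = f_e(x)$ and $y = f_e(y)$ are both torsion in $G_T$, giving control of both $\theta(x)$ and $\theta(y)$. After normalizing $\theta(x)$ by an inner automorphism to be a power of some $f_w(x)$, the analysis of $\theta(f_0(x)) = \theta(x^5 y)$ forces the conjugator appearing in $\theta(y)$ to be trivial and pins $\theta$ down to $f_w$ up to sign. Replacing your single base word $\rho_e$ with the pair of generators, and using four primes rather than two, would bring your outline into line with the paper's proof; the rest of the structure (the block-decomposition lemmas needed to trace relators back through the $\phi_u$) then proceeds much as you anticipate.
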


In order to define the relations of $G_T$, we first define the following two homomorphisms:

\begin{align*}
f_0 \from \F_2 &\to\F_2 & f_1 \from \F_2 &\to\F_2 \\ 
x &\mapsto x^5y & x &\mapsto x^2yxyx \\
y &\mapsto y^5x & y &\mapsto y^2xyxy
\end{align*}

We also define $f_e$ to be the identity map.  For any element $w\in 2^{<\N}$, we define $f_w$ to be the corresponding composition of $f_0$ and $f_1$, e.g.\ $f_{01}=f_0\circ f_1$ and $f_{110}=f_1\circ f_1\circ f_0$.  In other words, if we can write $w$ as $u^\frown v$, then $f_w=f_u\circ f_v$.  The associativity of function composition ensures that this is well-defined.

One basic property of these maps is that for all $u\in 2^{<\N}$, the first letter of $f_u(a)$ is different for each $a\in\{x^{\pm 1}, y^{\pm 1}\}$, and the same is true for the last letter.  This can be established through an easy induction on the length of $u$.  If $u=e$, then this is immediate, and for $u=0$ or $u=1$, we quickly check that it holds.  Now suppose that this is true for $u$.  Then for $i\in\{0,1\}$, consider $f_{u^\frown i}(a)=f_u(f_i(a))$.  We have already seen that the first and last letters of $f_i(a)$ are different for each $a$.  By assumption, $f_u$ takes the first and last letters of $f_i(a)$ to words with first and last letters different from those of $f_i(b)$ for any $b\neq a$ with $b\in\{x^{\pm 1},y^{\pm 1}\}$, and this completes the induction.

With this established, a similar induction shows that every $f_u$ takes freely reduced words to freely reduced words.  In fact, every $f_u$ takes cyclically reduced words to cyclically reduced words, since for $a,b\in\{x^{\pm 1}, y^{\pm 1}\}$, the first letter of $f_u(a)$ is the inverse of the last letter of $f_u(b)$ only if $b=a^{-1}$, by the uniqueness of the last letters.

If $\alpha\in\F_2$, then for any $w\in 2^{<\N}$, we can think of $f_w(\alpha)$ as a word on
$$\{f_w(a) \mid a\in\{x^{\pm 1}, y^{\pm 1}\}\}.$$
We refer to these special subwords as $f_w$-blocks.  See figure \ref{blocksbasic}.

\begin{figure}
$$\underbrace{ \underbrace{x^5yx^5yx^5yx^5yx^5yy^5x}_{f_{00}(x)} \underbrace{x^5yx^5yx^5yx^5yx^5yy^5x}_{f_{00}(x)} \underbrace{x^5yx^5yx^5yx^5yx^5yy^5x}_{f_{00}(x)}}_{3\,\mathrm{times}} $$
\caption{$f_{00}(x)^3$, with its $f_{00}$-blocks shown}\label{blocksbasic}
\end{figure}

Given $T\in Tr(2)$, we define 
$$G_T=\langle x,y \mid \{(f_w(x))^{59}, (f_w(y))^{61} \mid w\in T\}, \{(f_w(x))^{67}, (f_w(y))^{71} \mid w\notin T\}\rangle. $$
The numbers in the exponents were chosen to be relatively prime and so that the relations satisfy small cancellation conditions, and they have no significance beyond that.  We will eventually show that the map $T\mapsto G_T$ is a Borel reduction from $\qlt_2^{tree}$ to $\qlt_{em}$.  To see that this is the case, we proceed by a series of lemmas.

\begin{lemma}\label{lemma:lineup}
Let $u\in 2^{<\N}$, $a,b,c\in\{x^{\pm 1}, y^{\pm 1}\}$.  Suppose $f_u(a)$ is a subword of \mbox{$f_u(bc)=f_u(b)f_u(c)$}.  Then $f_u(a)$ does not contain letters from both $f_u(b)$ and $f_u(c)$.  In other words, $f_u(a)$ must equal $f_u(b)$ or $f_u(c)$.  It follows that if $\alpha,\beta\in\F_2$ are nontrivial and $f_u(\alpha)$ is a subword of $f_u(\beta)$, then $\alpha$ is a subword of $\beta$.
\end{lemma}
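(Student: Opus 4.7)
My plan is to induct on $|u|$, proving both parts of the lemma simultaneously. The base case $u=e$ is immediate since $f_e$ is the identity and, for single letters $a,b,c$, a subword $a$ of the two-letter word $bc$ forces $a=b$ or $a=c$.

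For the inductive step I write $u={u'}^\frown i$ with $i\in\{0,1\}$, so that $f_u=f_{u'}\circ f_i$, and assume both assertions for $u'$. First I handle the block assertion for $u$. Suppose $f_u(a)$ is a subword of $f_u(b)f_u(c)$. If $c\neq b^{-1}$, the first/last letter property established before the lemma shows that $f_{u'}(f_i(b))f_{u'}(f_i(c))$ is freely reduced and coincides with $f_{u'}(f_i(b)f_i(c))$; so by the second part of the inductive hypothesis applied to $u'$, $f_i(a)$ is a subword of $f_i(b)f_i(c)$, and it remains to verify the block assertion directly for $f_0$ and $f_1$. This I would dispatch by a short case analysis on the starting position, using the fact that the shapes of the four $f_i$-blocks are rigid enough that any occurrence inside a two-block concatenation must coincide with one of the blocks. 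If instead $c=b^{-1}$, the boundary of $f_u(b)f_u(c)=f_u(b)f_u(b)^{-1}$ is a letter $z$ immediately followed by $z^{-1}$, so $f_u(a)$, being freely reduced, cannot straddle that boundary, and by equality of lengths equals $f_u(b)$ or $f_u(c)$.

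For the second assertion, I write $\alpha=a_1\cdots a_m$ and $\beta=b_1\cdots b_n$ in reduced form, set $\ell_u=|f_u(x)|$, and suppose $f_u(\alpha)$ occurs in $f_u(\beta)$ starting at position $p$. If $p$ were not a multiple of $\ell_u$, the block $f_u(a_1)$ would contain letters from both $f_u(b_j)$ and $f_u(b_{j+1})$ for the appropriate $j$, contradicting the block assertion just proved. Hence $p=j\ell_u$, and matching blocks forces $a_k=b_{j+k}$ for each $k$ (using that $f_u$ is injective on single letters), so $\alpha$ is a subword of $\beta$.

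I expect the main obstacle to be the direct verification of the block assertion for $f_0$ and $f_1$: the specific exponents in $x^5y$, $y^5x$, $x^2yxyx$, $y^2xyxy$ are chosen so that a shifted copy of any block cannot fit inside a two-block concatenation, and the inductive framework above reduces the whole lemma to this finite combinatorial check.
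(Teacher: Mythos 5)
Your proposal is correct and follows essentially the same route as the paper: a joint induction on $|u|$, where the inductive step uses $f_u=f_{u'}\circ f_i$ together with the inductive hypothesis for $u'$ to reduce the block statement to a finite combinatorial check for $f_0$ and $f_1$, and where the subword statement is derived from the block statement by the standard ``blocks must align'' argument. Your extra handling of the degenerate case $c=b^{-1}$ is a minor refinement the paper passes over silently (it only ever invokes the lemma when $bc$ is reduced), but it does not change the substance of the argument.
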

\begin{proof}
We prove this inductively.  It is easily checked to be true in the case that $u\in\{0,1\}$.  Now suppose that it is true for all $u$ with $|u|<n$.  Then if $u'=u^\frown i$ with $|u'|=n$ and $i\in\{0,1\}$ we may write $f_{u'}(a)=f_u(f_i(a))$ and  $f_{u'}(bc)=f_u(f_i(bc))$.  By assumption, the $f_u$-blocks in $f_{u'}(a)$ line up with the $f_u$-blocks in $f_{u'}(bc)$, and since $f_{u'}(a)$ is a subword of $f_{u'}(bc)$, it follows that $f_i(a)$ is a subword of $f_i(bc)$.  This implies that $f_i(a)$ equals $f_i(b)$ or $f_i(c)$.  Thus we find $f_{u'}(a)$ equals $f_{u'}(b)$ or $f_{u'}(c)$.
\end{proof}

\begin{lemma}\label{functioncyclic}
Let $w\in 2^{<\N}$, $\alpha,\beta\in\F_2$.  If $f_w(\alpha)$ is a cyclic permutation of $f_w(\beta)$, then $\alpha$ is a cyclic permutation of $\beta$.
\end{lemma}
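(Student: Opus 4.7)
The plan is to use Lemma \ref{lemma:lineup} together with the elementary fact that every cyclic permutation of a cyclically reduced word $\gamma$ occurs as a subword of $\gamma^2$.

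First I would record two preliminary observations. An easy induction on $|u|$ shows that for every $u \in 2^{<\N}$ and every $a \in \{x^{\pm 1}, y^{\pm 1}\}$ the length $|f_u(a)|$ depends only on $|u|$ (in fact it equals $6^{|u|}$): the cases $u \in \{0,1\}$ are immediate from the definitions of $f_0$ and $f_1$, and the inductive step is routine from $f_{u^\frown i}=f_u\circ f_i$. Consequently $|f_w(\alpha)|=|\alpha|\cdot 6^{|w|}$, so the hypothesis $|f_w(\alpha)|=|f_w(\beta)|$ already forces $|\alpha|=|\beta|$. Secondly, since ``cyclic permutation'' is a statement about cyclically reduced words, we may assume $\alpha$ and $\beta$ are themselves cyclically reduced. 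By the remarks preceding Lemma \ref{lemma:lineup}, $f_w(\alpha)$ and $f_w(\beta)$ are then also cyclically reduced. In particular the concatenation $f_w(\beta)\cdot f_w(\beta)$ involves no free cancellation, so $f_w(\beta^2)=f_w(\beta)^2$ at the level of freely reduced words; likewise $\beta^2$ is itself freely reduced.

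The main step is then very short. Writing $f_w(\beta)=UV$ and $f_w(\alpha)=VU$ as freely reduced words (which we can, as $f_w(\alpha)$ is by hypothesis a cyclic permutation of $f_w(\beta)$), we see that $f_w(\alpha)$ occurs as a subword of $UVUV=f_w(\beta)^2=f_w(\beta^2)$. Applying Lemma \ref{lemma:lineup} to the pair $(\alpha,\beta^2)$ yields that $\alpha$ is a subword of $\beta^2$. Combined with $|\alpha|=|\beta|$, this means $\alpha$ occurs as a length-$|\beta|$ subword of $\beta\beta$, and such a subword is by definition a cyclic permutation of $\beta$.

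The main (mild) obstacle is the bookkeeping that justifies applying Lemma \ref{lemma:lineup} with $\beta^2$ in the second slot: this requires knowing that $\beta^2$ is already freely reduced as a word and that $f_w(\beta^2)=f_w(\beta)^2$ as words, which is precisely why the reduction to cyclically reduced $\alpha,\beta$ in the first paragraph is useful. Beyond this, the argument is essentially a pigeonhole on the shared length.
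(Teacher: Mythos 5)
Your proof is correct, and it takes a cleaner organizational route than the paper's. The paper argues directly at the level of $f_w$-blocks: it writes out the cyclic permutation explicitly as $f_w(\beta) = g\, f_w(\alpha_2) f_w(\alpha_1)\, h$ with $hg = f_w(a_k)$, uses the single-block case of Lemma~\ref{lemma:lineup} to force $g = e$ or $h = e$ (i.e., the cut-point is a block boundary), and then reads off that $\beta$ is the corresponding cyclic rotation of $\alpha$. You instead invoke the standard trick that a cyclic permutation of a cyclically reduced word $\gamma$ is the same thing as a length-$|\gamma|$ subword of $\gamma^2$, apply only the ``it follows that'' clause of Lemma~\ref{lemma:lineup} once (to the pair $(\alpha, \beta^2)$), and finish with the pigeonhole on lengths coming from the observation that $|f_u(a)| = 6^{|u|}$ independently of $a$. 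Both arguments rest on Lemma~\ref{lemma:lineup}; yours avoids the cut-point bookkeeping at the cost of the (easy but worth recording) side facts that $\beta^2$ is freely reduced and that $f_w(\beta^2) = f_w(\beta)^2$ as reduced words, both of which you correctly trace back to cyclic reducedness. One very minor point: the nontrivial-$\alpha,\beta$ hypothesis of the ``it follows'' clause means you should dispense with the $\alpha = e$ case separately (it is immediate), but this does not affect the substance of the argument.
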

\begin{proof}
We may write $\alpha=a_1\ldots a_n$ and $\beta=b_1\ldots b_n$ with $a_i,b_j\in\{x^{\pm 1},y^{\pm 1}\}$, $1\leq i,j\leq n$.  Thus $f_w(\alpha)=f_w(a_1)\ldots f_w(a_n)$ and $f_w(\beta)=f_w(b_1)\ldots f_w(b_n)$.  As $f_w(\alpha)$ is a cyclic permutation of $f_w(\beta)$, there is some $1\leq k\leq n$ such that if we write $\alpha=\alpha_1 a_k \alpha_2$, then
$$f_w(b_1)\ldots f_w(b_n) = g f_w(\alpha_2)f_w(\alpha_1) h $$
where $hg=f_w(a_k)$.  By Lemma \ref{lemma:lineup}, we must have $g=e$ or $h=e$.  Suppose $g=e$.  (The $h=e$ case is similar.)  Then $f_w(\beta)=f_w(\alpha_2\alpha_1 a_k)$, and so again by Lemma \ref{lemma:lineup}, $\beta=\alpha_2\alpha_1 a_k$, which is a cyclic permutation of $\alpha$.
\end{proof}

\begin{lemma}\label{smallcancel}
Let $T\in Tr(2)$.  If $R_T$ denotes the symmetrization of the defining relations for $G_T$, then $R_T$ satisfies the $C'(1/8)$ small cancellation condition.
\end{lemma}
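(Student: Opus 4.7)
The plan is to bound any piece $p$---a common initial segment of two distinct $r_1, r_2 \in R_T$---by $|p| < |r_i|/8$ for $i = 1, 2$. Up to cyclic rotation and inversion, each $r_i$ equals $V_i^{n_i}$ with $V_i = f_{w_i}(a_i)$, $a_i \in \{x, y\}$, and $n_i \in \{59, 61, 67, 71\}$; hence $|V_i| = 6^{|w_i|}$ and $|r_i| = n_i \cdot 6^{|w_i|}$. Cyclic rotations of $V^n$ are positive while cyclic rotations of $V^{-n}$ are negative, so any common subword of a positive and a negative relator has length $0$; I may therefore assume both $r_i$ are cyclic rotations of positive powers (the fully negative case is symmetric).

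Before the case analysis, I would record three preliminary facts. (P1) Each $V_w^a = f_w(a)$ is primitive (not a proper power), by induction on $|w|$ using the injectivity of $f_0, f_1$. (P2) For distinct $a, b \in \{x^{\pm 1}, y^{\pm 1}\}$, the words $f_w(a)$ and $f_w(b)$ share no positive-length common suffix or common prefix---an immediate consequence of the distinct-first-letter and distinct-last-letter property the paper has already established. (P3) For any nonempty $u \in 2^{<\N}$ and any $a, c \in \{x, y\}$, the longest run of the letter $c$ inside $(f_u(a))^n$ is at most $6$: within one $f_0$-block the longest run is $5$, and combining the one-letter tail of an $f_0$-block with the five-letter head of the next can extend a run to length $6$ but no further; the analogous bound after applying $f_1$ is $3$.

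Let $w^*$ be the longest common prefix of $w_1, w_2$. Writing $(V_i)^{n_i} = f_{w^*}(\beta_i)$ (using that $f_{w^*}$ is a homomorphism), the iterated version of Lemma~\ref{lemma:lineup} shows that once $|p| \geq 2 \cdot 6^{|w^*|}$, the $f_{w^*}$-block structures on $p$ coming from the two decompositions must align, producing a common subword $\gamma$ of $\beta_1, \beta_2$ with $|p| \leq (|\gamma| + 2) \cdot 6^{|w^*|}$; moreover, (P2) forces the partial blocks at either end of $p$ to have length $0$ unless the corresponding letters of $\beta_1$ and $\beta_2$ at those positions actually agree, in which case those matching letters extend the common run seen in $\beta_1, \beta_2$.

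The case analysis then proceeds by how $w_1, w_2, a_1, a_2$ are related. If $V_1 = V_2$, then $p$ is a common initial segment of two distinct cyclic rotations of $V^n$, and primitivity (P1) forces $|p| < |V_1|$, so $|p|/|r_i| < 1/59 < 1/8$. If $V_1 \ne V_2$ but $w_1 = w_2$, then $\beta_1 = a_1^{n_1}$ and $\beta_2 = a_2^{n_2}$ use distinct letters, so $|\gamma| = 0$ and $|p| < 2 \cdot 6^{|w_1|}$, giving $|p|/|r_i| < 2/59 < 1/8$. If $w^* = w_1 \subsetneq w_2 = w_1 u$ with $u \ne e$, then $\beta_1 = a_1^{n_1}$ and $\beta_2 = (f_u(a_2))^{n_2}$, so $|\gamma|$ is a run of $a_1$ inside $\beta_2$, which (P3) bounds by $6$; combined with the (P2) endpoint restriction, this yields $|p| \leq 6 \cdot 6^{|w_1|}$, whence $|p|/|r_1| \leq 6/59 < 1/8$ and $|p|/|r_2| \leq 6/(n_2 \cdot 6^{|u|}) < 1/8$. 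Finally, if $w_1, w_2$ are incomparable with common prefix $w^*$, then $\beta_1$ and $\beta_2$ (WLOG) lie in the images of $f_0$ and $f_1$ respectively, so a common subword has max run $\leq 3$; a direct analysis of $f_0$-block boundaries shows that any subword of $\beta_1$ with max run $\leq 3$ has length at most $7$ (the value $7$ being attained by $x^3 y x^3$ inside $f_0(x)^2$), giving $|p| \leq 9 \cdot 6^{|w^*|}$, hence $|p|/|r_i| \leq 9/(6 \cdot 59) < 1/8$. The main technical obstacle will be the bookkeeping in the incomparable case, together with rigorously justifying the length-$7$ combinatorial bound.
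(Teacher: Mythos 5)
Your plan is sound and in essence matches the paper's proof: you reduce by block alignment (iterating Lemma~\ref{lemma:lineup}) to a letter-level estimate on the common piece $\gamma$, split into cases by the relative positions of $w_1$, $w_2$, and bound $|\gamma|$ combinatorially. The paper does exactly this, though with slightly different bookkeeping: it argues directly with the ``different first letters'' of $f_w(x^{\pm1}),f_w(y^{\pm1})$ and an explicit in/out-of-alignment split and gets a cap of about $5$--$7$ $f_u$-blocks, where you instead invoke primitivity (P1) for the $V_1=V_2$ case and the run-length bound (P3) elsewhere; both routes are valid and roughly the same amount of work. One thing to tighten before this would compile into a proof: as stated, your blanket inequality $|p|\leq(|\gamma|+2)\cdot 6^{|w^*|}$ is too weak in the case $w_1\subsetneq w_2$, where $|\gamma|\leq 6$ gives $8\cdot 6^{|w_1|}$, and $8/59>1/8$. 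You do gesture at the fix (``combined with the (P2) endpoint restriction''), and indeed the right statement is $|p|\leq|\gamma'|\cdot 6^{|w^*|}$ where $\gamma'$ counts the matched blocks \emph{including} any nonzero partial blocks at the ends (which by (P2) must correspond to matching letters, hence lie in the same run); with that convention $|\gamma'|\leq 6$ and $6/59<1/8$ as desired. I'd state that refined inequality once, cleanly, instead of $(|\gamma|+2)$, and the same refinement also improves your incomparable-case constant from $9$ to $7$. Finally, (P1) is correct and your use of it (primitive words have no nontrivial self-overlap, so distinct rotations of $V^n$ have common prefix $<|V|$) is fine, but the one-line justification ``by induction using injectivity'' is thin -- injectivity alone doesn't preserve primitivity ($f_0(x^2)$ is a proper power) -- so you'd want to spell out an argument, e.g.\ via the block structure and Lemma~\ref{lemma:lineup}; alternatively you can avoid (P1) entirely by using the paper's cruder $<2|V|$ bound, which also suffices since $2/59<1/8$.
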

\begin{proof}
We only need to check the positive relations, since they satisfy the $C'(1/8)$ condition iff their inverses do, and there is no overlap between a positive word and a negative word.

We begin with an easy case.  Suppose that $w\in 2^{<\N}$ and consider $f_w(x)^{n_x}$ and $f_w(y)^{n_y}$, where $n_x$ and $n_y$ denote the appropriate exponent, which depends on whether $w\in T$.  As $f_w(x)$ and $f_w(y)$ do not start with the same letter, they do not have a common initial segment.  We must also consider common initial segments of cyclic permutations of these two words, since we had to add the cyclic permutations of $f_w(x)^{n_x}$ and $f_w(y)^{n_y}$ to $R_T$ to make sure that it was symmetrized.

A picture of sorts helps in the analysis.  Before any sort of cyclic permutation, the two words can naturally be seen as being split into $f_w$-blocks.  When a word is cyclically permuted a bit, the blocks at the beginning and end are truncated, as in figure \ref{blocksshifted}.  Now we cannot determine which word is a power of $f_w(x)$ and which is a power of $f_w(y)$ just by looking at the first letter of the words as before.

\begin{figure}
$$yx^5yx^5yx^5yy^5x \underbrace{ \underbrace{x^5yx^5yx^5yx^5yx^5yy^5x}_{f_{00}(x)} \ldots \underbrace{x^5yx^5yx^5yx^5yx^5yy^5x}_{f_{00}(x)}}_{n_x-1\,\mathrm{times}} x^5yx^5$$
\caption{$f_{00}(x)^{n_x}$ after being cyclically permuted}\label{blocksshifted}
\end{figure}

Let $r_1$ be a cyclic permutation of $f_w(x)^{n_x}$ and $r_2$ be a cyclic permutation of $f_w(y)^{n_y}$ and let $M=\min\{|r_1|,|r_2|\}$.  Suppose that $r_1=st_1$ and $r_2=st_2$ with $s$ maximal.  Permuting both $r_1$ and $r_2$ leftwards by $<|f_w(x)|$ letters, we get $r_1^*$ and $r_2^*$, with $r_1^*=\nolinebreak f_w(x)^{n_x}$.  If the $f_w$-blocks in $r_2^*$ also line up correctly, i.e.\ $r_2^*=f_w(y)^{n_y}$, then we know that $r_1^*$ and $r_2^*$ disagree at their first letter.  Thus $|s|<|f_w(x)|<\frac{1}{8} M$ if $r_1\neq r_2$.

Suppose that $r_2^*$ is not in alignment, i.e.\ it is not $f_w(y)^{n_y}$.  Then for $r_1^*$ and $r_2^*$ to agree for $\geq |f_w(x)|$ letters, $f_w(x)$ must be a subword of $f_w(y)^2$ containing letters from each copy of $f_w(y)$.  But this is not possible, by Lemma \ref{lemma:lineup}.  Thus $r_1^*$ and $r_2^*$ agree for $<|f_w(x)|$ letters if they are out of alignment and so $|s|<2|f_w(x)|<\frac{1}{8}M$.  In fact, the same reasoning shows that two different cyclic permutations of $f_w(x)^{n_x}$ or of $f_w(y)^{n_y}$ also overlap for less than $\frac{1}{8}M$ letters.

Now we consider the case when $w,v\in T$ are distinct and \mbox{$a,b\in\{x,y\}$}.  Let $r_1$ be a cyclic permutation of $(f_w(a))^{n_a}$ and $r_2$ be a cyclic permutation of $(f_v(b))^{n_b}$, and let $M=\min\{|r_1|,|r_2|\}$.  If $v=e$ and $w\neq e$, then we observe that no cyclic permutation of $(f_w(a))^{n_a}$ agrees with $(f_e(b))^{n_b}=b^{n_b}$ for more than 6 letters, which is less than $1/8$ of the length of either word.  If $w$ and $v$ begin with different symbols, then one of $r_1$ and $r_2$ will be a cyclic permutation of a word in $x^5y$ and $y^5x$, while the other will be a cyclic permutation of a word in $x^2yxyx$ and $y^2xyxy$.  Then the biggest possible common initial segment between $r_1$ and $r_2$ is $xyx^3$ or $yxy^3$, which is less than $1/8$ of the length of either word.

So we may assume that $w$ and $v$ start with the same symbols.  Suppose that $w=u^\frown w'$ and $v=u^\frown v'$, with $u$ maximal.  Taking our cue from the notation for the greatest common divisor, we will write this as $u=(w,v)$.  This should not cause confusion, as there are no ordered pairs (or greatest common divisors!) in what follows.  Then up to some truncated bits at the beginning and end, $r_1$ and $r_2$ are both words in $f_u(x)$ and $f_u(y)$, and so we are in a situation very similar to our first case, except that now $r_1$ and $r_2$ contain a mix of $f_u(x)$- and $f_u(y)$-blocks, rather than just being conjugates of a power of one or the other.  See figure \ref{lesserblocksshifted} for a picture.

\begin{figure}
$$y\underbrace{x^5y}_{f_0(x)} \underbrace{x^5y}_{f_0(x)} \underbrace{x^5y}_{f_0(x)} \underbrace{y^5x}_{f_0(y)} \underbrace{x^5y}_{f_0(x)} \underbrace{x^5y}_{f_0(x)} \underbrace{x^5y}_{f_0(x)} \underbrace{x^5y}_{f_0(x)} \ldots \underbrace{x^5y}_{f_0(x)} \underbrace{x^5y}_{f_0(x)} \underbrace{x^5y}_{f_0(x)} \underbrace{y^5x}_{f_0(y)} \underbrace{x^5y}_{f_0(x)}x^5$$
\caption{$f_{00}(x)^{n_x}$ after being cyclically permuted, with $f_0(x)$- and $f_0(y)$-blocks shown}\label{lesserblocksshifted}
\end{figure}

Suppose that $r_1$ and $r_2$ are both made up entirely of $f_u$-blocks, i.e.\ there are no truncated $f_u$-blocks at their beginning and end.  Because $f_u(x)$ and $f_u(y)$ start with different letters, we see that if $r_1$ and $r_2$ agree on the beginning of a block, then they agree for the entire block.  So the largest common initial segment $s$ which $r_1$ and $r_2$ share is made up of entire $f_u$-blocks.  We have $r_1=f_u(\alpha)$, $r_2=f_u(\beta)$, and $s=f_u(\gamma)$ with $\alpha,\beta,\gamma$ words in $x$ and $y$, and so by Lemma \ref{lemma:lineup}, we find that $\gamma$ is a common initial segment of $\alpha$ and $\beta$.  Furthermore, $r_1=f_u(\alpha)$ is a cyclic permutation of $f_w(a^{n_a})=f_u(f_{w'}(a^{n_a}))$, and so by Lemma \ref{functioncyclic}, $\alpha$ is a cyclic permutation of $f_{w'}(a^{n_a})$.  Similarly, $\beta$ is a cyclic permutation of $f_{v'}(b^{n_b})$.

So $\gamma$ is a common initial segment of a cyclic permutation of $f_{w'}(a^{n_a})$ and a cyclic permutation of $f_{v'}(b^{n_b})$.  This brings us back to the earlier cases.  If $w'$ and $v'$ are both nontrivial words, then they start with different symbols, which implies that $|\gamma|\leq 5$, and so $s$ is made up of at most 5 $f_u$-blocks.  On the other hand, $r_1$ and $r_2$ are made up of at least $6\cdot\min\{|f_{w'}(a^{n_a})|,|f_{v'}(b^{n_b})|\}$ $f_u$-blocks, and so $|s|<\frac{1}{8}M$.  If $w'=e$ and $v'\neq e$ or vice versa, then $|\gamma|\leq 6$ and we still find that $|s|<\frac{1}{8}M$.  If $w'=e$ and $v'=e$, then $\gamma$ is empty unless $a=b$, which implies that $r_1=r_2$.

This leaves only the out of alignment cases to deal with.  As before, we may permute $r_1$ and $r_2$ leftwards by $<|f_u(x)|$ letters to get $r_1^*$ and $r_2^*$, with $r_1^*$ a product of $f_u$-blocks.  If $r_2^*$ is not a product of $f_u$-blocks, then Lemma \ref{lemma:lineup} tells us that $r_1^*$ and $r_2^*$ agree for $<|f_u(x)|$ letters, and so $r_1$ and $r_2$ agree for $<2|f_u(x)|$ letters, which is $<1/8$ of the length of each word.  If $r_2^*$ is a product of $f_u$-blocks, then we are in the previous case, and we have seen that either $r_1^*=r_2^*$ or the corresponding common initial segment between them consists of at most $6$ $f_u$-blocks.  This implies that $|s|<7|f_u(a)|<\frac{1}{8}M$.  We have finally exhausted all of the cases and have shown that $R_T$ satisfies the $C'(1/8)$ condition, as desired.
\end{proof}

\begin{lemma}\label{homomorphism}
If $T,T'\in Tr(2)$ and there exists $w\in 2^{<\N}$ such that $T=T'_w$, then $G_T\into G_{T'}$.
\end{lemma}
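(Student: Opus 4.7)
The plan is to define $\phi \from G_T \to G_{T'}$ on generators by $x \mapsto f_w(x)$ and $y \mapsto f_w(y)$, so that $\phi$ is induced at the free-group level by $f_w$. First I would verify this is well-defined: a defining relator of $G_T$ has the form $f_{v'}(a)^{n_a}$ for some $v' \in 2^{<\N}$ and $a \in \{x,y\}$, with exponent $n_a$ determined by whether $v' \in T$. Its image under $\phi$ is $f_{w^\frown v'}(a)^{n_a}$; since $T = T'_w$ gives $v' \in T \Iff w^\frown v' \in T'$, this coincides with a defining relator of $G_{T'}$ indexed by $w^\frown v'$, so $\phi$ sends relators to relators.

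For injectivity I would combine Theorem \ref{thm:wordproblem} with the block-structure analysis from Lemmas \ref{lemma:lineup}, \ref{functioncyclic}, and \ref{smallcancel}. Assume $\alpha \in \F_2$ is cyclically reduced and nontrivial, and suppose for contradiction that $f_w(\alpha) = 1$ in $G_{T'}$. Since $f_w$ preserves both free and cyclic reducedness, $f_w(\alpha)$ is cyclically reduced and nontrivial in $\F_2$, so Theorem \ref{thm:wordproblem} yields a subword $s$ of $f_w(\alpha)$ and a relator $r \in R_{T'}$ with $|s| > |r|/2$; up to cyclic permutation and inversion, $r = f_v(a)^{n_a}$ for some $v \in 2^{<\N}$ and $a \in \{x,y\}$.

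The central step is a case analysis on the relationship between $v$ and $w$. In Case A, where $v = w^\frown v'$, the relator $r$ is a cyclic permutation of $f_w(f_{v'}(a)^{n_a})$. Since all $f_w$-blocks have the same length, a cyclic shift that misaligns the $f_w$-block structure of $r$ relative to that of $f_w(\alpha)$ would force every full $f_w$-block of $f_w(\alpha)$ inside $s$ to straddle an $r$-block boundary, contradicting Lemma \ref{lemma:lineup}. So $s$ could contain no full $f_w$-block, giving $|s| \leq 2|f_w(x)|$, which is incompatible with $|r|/2 \geq (n_a/2)|f_w(x)|$. Hence $s$ must be $f_w$-aligned, and then $s = f_w(s')$ for some subword $s'$ of $\alpha$ matching more than half of a cyclic permutation of the $R_T$-relator $f_{v'}(a)^{n_a}$. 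A step of Dehn's algorithm over $R_T$ then shortens $\alpha$; the resulting word remains in the domain of $f_w$ with trivial image in $G_{T'}$, and iterating reduces $\alpha$ to the empty word, contradicting nontriviality.

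The hardest part will be excluding Case B, where $w$ is not a prefix of $v$. Setting $u = (v, w)$ and writing $v = u^\frown v''$, $w = u^\frown w''$, I would split into two subcases. If $v$ is a proper prefix of $w$ (so $v'' = e$, $w'' \neq e$), then $f_w(\alpha) = f_v(f_{w''}(\alpha))$, and an $f_v$-aligned overlap would demand a subword of $f_{w''}(\alpha)$ consisting of more than $n_a/2 \geq 29$ consecutive copies of a single letter. Ruling this out requires an induction on $|w''|$ showing that the maximal single-letter run in any $f_{w''}(\beta)$ is bounded by a small absolute constant; the key input is that boundary-induced runs between adjacent $f_0$- and $f_1$-blocks do not compound under further compositions. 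If $v''$ and $w''$ are both nonempty with differing initial symbols, the $f_u$-block overlap analysis from Lemma \ref{smallcancel} adapts: $f_u$-blocks in $s$ and $r$ can agree for only a bounded number of positions, insufficient to achieve $|s| > |r|/2$. Misaligned overlaps in either subcase contribute at most a small constant multiple of $|f_u(x)|$ via Lemma \ref{lemma:lineup}. These bounds together contradict $|s| > |r|/2$, ruling out Case B and completing the injectivity argument.
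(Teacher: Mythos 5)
Your proof takes essentially the same route as the paper's: well-definedness via the index shift $v\in T\Iff w^\frown v\in T'$, and injectivity by showing that a more-than-half overlap of $f_w(\alpha)$ with an $R_{T'}$-relator forces a corresponding overlap of $\alpha$ with an $R_T$-relator, using the greatest-common-prefix case split on $u=(w,v)$ together with the block-alignment lemmas. The only cosmetic differences are that you iterate Dehn's algorithm where the paper instead begins from a Dehn-reduced representative, and that you flag explicitly the bounded-run induction for $f_{w'}(\alpha)$ that the paper merely asserts.
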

\begin{proof}
This is obvious if $w=e$, and so we may assume that $w$ is nontrivial.  It is easy to see that $f_w$, viewed as a map from $G_T$ to $G_{T'}$, is a homomorphism, since it will take defining relations in $G_T$ to defining relations in $G_{T'}$.  In more detail, 
\begin{align*}
f_v(x)^{59},f_v(y)^{61}\in R_T &\LongIff v\in T \\
 &\LongIff w^\frown v\in T' \\
 &\LongIff f_{w^\frown v}(x)^{59}=f_w(f_v(x)^{59}), \\
 & \hspace{15pt} f_{w^\frown v}(y)^{61}=f_w(f_v(y)^{61}) \in R_{T'}
\end{align*}
and similar equivalences hold for $f_v(x)^{67}$ and $f_v(y)^{71}$.  It remains to show that $f_w$ is an embedding.

We still need to show that nontrivial elements of $G_T$ do not map to the identity in $G_{T'}$.  As in the previous section, we will show that if $\alpha\in\F_2$ is such that $f_w(\alpha)$ contains more than $1/2$ of a relation in $R_{T'}$, then $\alpha$ contains more than $1/2$ of a relation in $R_T$, which easily implies the result.

Suppose that $\alpha\in\F_2$ and that $f_w(\alpha)=1$ in $G_{T'}$.  Then $f_w(\alpha)$ contains more than half of a relation $r\in R_{T'}$.  We know that $r$ is a cyclic permutation of some $f_v(a^{n_a})$, where $v\in 2^{<\N}$, $a\in\{x^{\pm 1}, y^{\pm 1}\}$, and $n_a\in\{59,61,67,71\}$.  Let $u=(w,v)$, so that $w=u^\frown w'$ and $v=u^\frown v'$.  Then $f_w(\alpha)=f_u(f_{w'}(\alpha))$, and $r$ is a cyclic permutation of $f_u(f_{v'}(a^{n_a}))$.  By assumption, the subword of $r$ that both words contain must be big enough to contain an entire $f_u$-block.  Thus Lemma \ref{lemma:lineup} tells us that the $f_u$-blocks of $r$ and $f_w(\alpha)$ must line up.  The $f_u$-blocks are uniquely identified by their first or last letters, so once $f_w(\alpha)$ and $r$ agree for part of an $f_u$-block, they agree on the whole thing, unless $r$ begins and ends with a truncated $f_u$-block.  In this case, cyclically permuting $r$ until it is made up of $f_u$-blocks will ``complete" the $f_u$-block at one end of $r$.  This new word is also a relation which agrees with $f_w(\alpha)$ for at least as long as $r$ did, since either only one end of $r$ was in $f_w(\alpha)$ and cyclically permuting increases the length of the word the two agree on, or $r$ was a subword of $f_w(\alpha)$ and this cyclic permutation is also a subword of $f_w(\alpha)$.

Thus we may assume that $r=f_u(\omega)$ for some $\omega\in\F_2$, and that $r$ and $f_w(\alpha)$ share a subword of the form $f_u(\gamma)$, where $\gamma\in\F_2$.  By Lemma \ref{functioncyclic}, we know that $\omega$ is a cyclic permutation of $f_{v'}(a^{n_a})$.  Then $\gamma$ is a subword of a cyclic permutation of $f_{v'}(a^{n_a})$ and a subword of $f_{w'}(\alpha)$.  If $w'$ and $v'$ are both nontrivial, then they begin with different symbols, and so $|\gamma|\leq 5$.  But then
\begin{align*}
\frac{|f_u(\gamma)|}{|f_v(a^{n_a})|} &= \frac{|\gamma|}{|f_{v'}(a^{n_a})|} \\
 &\leq  \frac{5}{|f_{v'}(a^{n_a})|} \\
 &< 1/2
\end{align*} 
which is a contradiction.  If $v'=e$ but $w'\neq e$, then virtually the same inequalities hold since $f_w'(\alpha)$ does not contain any letter to a power greater than 6, and again we get a contradiction.  Thus $w'=e$, meaning $w=u$, and so $\alpha$ contains $>1/2$ of a cyclic permutation of $f_{v'}(a^{n_a})$.  Further, since $w^\frown v'\in T' \Leftrightarrow v'\in T$, it follows that $f_{v'}(a^{n_a})\in R_T$.  Thus if $f_w(\alpha)=1$ in $G_{T'}$, then $\alpha$ contains $>1/2$ of a word in $R_T$, as desired.
\end{proof}

The proof of the converse of Lemma \ref{homomorphism} will depend on the following two lemmas.

\begin{lemma}\label{cyclesubword}
Suppose $\alpha,\beta\in\F_2$ are cyclically reduced, and $w,v\in 2^{<\N}$.  If $r_1$ is a cyclic permutation of $f_w(\alpha)$ and $r_2$ is both a cyclic permutation of $f_v(\beta)$ and a subword of $r_1$, then $v\subset w$ or $w\subset v$.

Moreover, if $w=v^\frown w'$ then a cyclic permutation of $f_{w'}(\alpha)$ contains a cyclic permutation of $\beta$, and if $v=w^\frown v'$, then a cyclic permutation of $\alpha$ contains a cyclic permutation of $f_{v'}(\beta)$.
\end{lemma}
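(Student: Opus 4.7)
The plan is to set $u$ to be the longest common prefix of $w$ and $v$, write $w = u^\frown w'$ and $v = u^\frown v'$, and reduce everything one level down inside the $f_u$-blocks; the first sentence of the lemma and the moreover will both fall out of this descent.

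First I would carry out the $f_u$-block alignment. Since $r_1$ is a cyclic permutation of $f_u(f_{w'}(\alpha))$ and $r_2$ is a cyclic permutation of $f_u(f_{v'}(\beta))$, each of $r_1, r_2$ naturally decomposes as a sequence of full $f_u$-blocks with at most one block split between its beginning and end due to the cyclic shift. Because $\beta$ is nontrivial, $r_2$ contains at least one full $f_u$-block $f_u(b)$; viewing this as a subword of $r_1$ and invoking Lemma \ref{lemma:lineup}, $f_u(b)$ cannot straddle two adjacent $f_u$-blocks of $r_1$. This pins down the relative phase of the two $f_u$-tilings, and the uniqueness of the last letter of each $f_u$-block (established in the paragraph preceding Lemma \ref{lemma:lineup}) then forces the truncated end blocks to match up as well. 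Peeling off $f_u$ yields a subword relation one level down: some cyclic permutation of $f_{v'}(\beta)$ appears as a subword of some cyclic permutation of $f_{w'}(\alpha)$.

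I would next split on whether $w'$ and $v'$ are empty. If $v' = e$, i.e.\ $v \subset w$ with $w = v^\frown w'$, then $f_{v'}(\beta) = \beta$ and the descent delivers precisely the first clause of the moreover; symmetrically, $w' = e$ delivers the second clause. The only remaining possibility is that $w'$ and $v'$ are both nonempty, and by choice of $u$ they must then start with different symbols; WLOG $w' = 0^\frown w''$ and $v' = 1^\frown v''$. A contradiction will show that this last case is impossible, completing the proof of the first sentence.

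The heart of the argument, and the main obstacle, is this last combinatorial incompatibility: a cyclic permutation of $f_1(f_{v''}(\beta))$ cannot be a subword of a cyclic permutation of $f_0(f_{w''}(\alpha))$. I would establish this by a length-$6$ window analysis. Every cyclic shift of an $f_1$-block such as $f_1(x) = x^2yxyx$ has at least three letter-switches, and such a shift consists of letters of only one sign (because a single $f_1$-block does). On the other hand, a length-$6$ window of a reduced product of $f_0$-blocks has at most two letter-switches unless it crosses a seam between a positive and a negative $f_0$-block, in which case one has three switches but the window necessarily mixes letters of both signs. A finite case check then rules out any coincidence of length-$6$ windows, contradicting the subword relation.
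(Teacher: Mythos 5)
Your proof is structurally the same as the paper's: set $u$ to be the longest common prefix of $w$ and $v$, align the $f_u$-tilings of $r_1$ and $r_2$ using Lemma \ref{lemma:lineup} and the uniqueness of first/last letters of $f_u$-blocks, strip off $f_u$ to obtain a subword relation between cyclic permutations of $f_{w'}(\alpha)$ and $f_{v'}(\beta)$, and then split on whether $w'$, $v'$ are trivial. (The paper disposes of the $u=e$ case first and then refers back to it for the descended case; you descend first and then split, which is only a cosmetic reordering.) Where you genuinely diverge is at the combinatorial base case, that a cyclic permutation of an $f_1$-word cannot sit inside a cyclic permutation of an $f_0$-word. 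The paper dispatches this in one sentence by asserting that the longest possible agreement between such words is $5$ letters, implicitly leaning on the common-initial-segment computation from the proof of Lemma \ref{smallcancel}; you instead give a self-contained count of letter switches in length-$6$ windows together with a sign-tracking argument. This is more elementary, and arguably tighter: the Lemma \ref{smallcancel} computation is carried out only for positive relator words, whereas here $\alpha$ and $\beta$ may be arbitrary cyclically reduced words with mixed signs, and your observation that the only $\geq 3$-switch windows on the $f_0$ side occur at sign-changing seams (hence mix signs, unlike any cyclic shift of a single $f_1$-block) is precisely the extra fact needed for the mixed-sign case. Two small items to fill in when writing this up: cite Lemma \ref{functioncyclic} when peeling off $f_u$ to justify that the block-aligned truncations are cyclic permutations of $f_{w'}(\alpha)$ and $f_{v'}(\beta)$; and when $f_{v''}(\beta)$ has length at least $2$, take a full untruncated $f_1$-block inside $r_2$ as your high-switch one-signed window (this is the ``trivial cyclic shift'' case of your observation, giving four switches of a single sign).
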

\begin{proof}
The result is trivial if $w=e$ or $v=e$, so we may assume that $w$ and $v$ are nontrivial.  Let $u=(w,v)$, with $w=u^\frown w'$ and $v=u^\frown v'$.  If $u=e$, then $w$ and $v$ begin with different symbols, which is impossible, since $|r_1|,|r_2|>5$, the length of the longest possible agreement between $r_1$ and $r_2$.  So $u$ is nontrivial, and $r_1$ is a cyclic permutation of $f_u(f_{w'}(\alpha))$, while $r_2$ is a cyclic permutation of $f_u(f_{v'}(\beta))$.  The $f_u$-blocks of each word must line up, by Lemma \ref{lemma:lineup}.  Further, since $r_2$ is a subword of $r_1$, any truncated bits of $f_u$-blocks at the ends of $r_2$ are duplicated in $r_1$.  So we can permute $r_1$ and $r_2$ the same amount to get $r_1^*=f_u(\gamma)$ and $r_2^*=f_u(\omega)$, words composed entirely of $f_u$-blocks, with $r_2^*$ contained in $r_1^*$.  By Lemma \ref{functioncyclic}, we know $\gamma$ is a cyclic permutation of $f_{w'}(\alpha)$ and $\omega$ is a cyclic permutation of $f_{v'}(\beta)$, and that $\omega$ is a subword of $\gamma$.

If $w'$ and $v'$ are nontrivial, then they start with different symbols, and as above we reach a contradiction.  Thus either $w'=e$, and so $w\subset v$ and a cyclic permutation of $\alpha$ contains a cyclic permutation of $f_{v'}(\beta)$, or $v'=e$, so $v\subset w$ and a cyclic permutation of $f_{w'}(\alpha)$ contains a cyclic permutation of $\beta$.
\end{proof}

\begin{lemma}\label{lemma:composed}
Suppose that $t,u,v\in 2^{<\N}$, and some cyclic permutation of $f_t(x^k)$ is a product of a cyclic permutation of $f_u(x^l)$ and a cyclic permutation of $f_v(y^m)$, with $k,l,m\in\Z\setminus\{0\}$.  Then $u=v$, $t=u^\frown 0$, $k=m=\pm 1$, $l=5k$, and \mbox{$f_t(x^k)=f_{u^\frown 0}(x^\pm 1)$} is either $$f_u(x^5)f_u(y)$$ or
$$f_u(y^{-1})f_u(x^{-5}). $$
\end{lemma}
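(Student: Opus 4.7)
The plan is to apply Lemma~\ref{cyclesubword} first to cut down the possibilities for $u$, $v$, and $t$. Write $P=\alpha\beta$ for the cyclic permutation of $f_t(x^k)$ furnished by the hypothesis, with $\alpha$ a cyclic permutation of $f_u(x^l)$ and $\beta$ a cyclic permutation of $f_v(y^m)$. Applying Lemma~\ref{cyclesubword} to $\alpha$ as a subword of $P$ gives $u\subseteq t$ or $t\subsetneq u$; the second alternative is ruled out by the ``moreover'' clause, which would force a cyclic permutation of $f_{v'}(x^l)$ with $v'\ne e$ to sit inside a cyclic permutation of $x^k$. But $f_{v'}(x^l)$ contains $y$-letters for every $v'\ne e$ (by an easy induction on $|v'|$), while $x^k$ contains none, a contradiction. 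The same argument applied to $\beta$ gives $v\subseteq t$, so $u$ and $v$ are comparable prefixes of $t$. I would assume without loss of generality that $u\subseteq v$, write $v=u^\frown w''$ and $t=u^\frown w'$, and handle the symmetric case $v\subsetneq u$ in parallel at the level of $f_v$-blocks.

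Next, I would pass to the $f_u$-block level. Mirroring the block-alignment argument in the proof of Lemma~\ref{smallcancel}, any full interior $f_u$-block of $\alpha$ or of $\beta$ must coincide with an $f_u$-block of $P$ by Lemma~\ref{lemma:lineup}, forcing the $f_u$-block grids of $\alpha$, $\beta$, and $P$ to coincide. The small-parameter edge cases where either factor has no interior full block, namely $|l|=1$ or $|m|\cdot 6^{|w''|}=1$, I would handle separately by length comparison, using in particular that a length-$|f_u|$ window of a cyclic permutation of $f_u(x)^k$ is itself a cyclic permutation of $f_u(x)$, which by Lemma~\ref{functioncyclic} cannot equal a cyclic permutation of $f_u(y^{\pm 1})$. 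Once the common block grid is established, the block sequence of $\alpha$ is $x^l$, that of $\beta$ is a cyclic permutation of $f_{w''}(y^m)$, and that of $P$ is a cyclic permutation of $f_{w'}(x)^k$, so the identity $P=\alpha\beta$ reads
\[
x^{l}\cdot(\text{cyclic permutation of }f_{w''}(y^m))\;=\;\text{cyclic permutation of }f_{w'}(x)^k
\]
at the block level.

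To conclude, I would count letter occurrences in this block-level identity. Writing $a_{w'},b_{w'}$ for the numbers of $x$'s and $y$'s in $f_{w'}(x)$ and $c_{w''},d_{w''}$ for the same in $f_{w''}(y)$, signed $x$- and $y$-counts yield $k a_{w'}=l+m c_{w''}$ and $k b_{w'}=m d_{w''}$. Combined with the upper bound $|l|\leq M_{w'}$, where $M_{w'}$ is the maximum $x^{\pm 1}$-run in a cyclic permutation of $f_{w'}(x)^k$, the key observation is that $a_{w'}>M_{w'}$ for every $w'\notin\{e,0\}$ while $a_0=M_0=5$; combined with $|k|\geq 1$ this leaves only $w'=0$ and $|k|=1$. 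A further check of the system for $w'=0$ shows $w''=e$ is forced: it requires $|l|\geq|5d_{w''}-c_{w''}|$, which exceeds $M_{w'}=5$ whenever $w''\ne e$. Hence $u=v$, $t=u^\frown 0$, $l=5k$, and $m=k$; the symmetric analysis rules out $v\subsetneq u$. Reading off signs then gives $f_t(x)=f_u(f_0(x))=f_u(x^5)f_u(y)$ when $k=1$ and $f_t(x^{-1})=f_u(f_0(x^{-1}))=f_u(y^{-1})f_u(x^{-5})$ when $k=-1$, matching the two formulas in the statement. The main obstacle I expect is not this arithmetic but the block-alignment bookkeeping in the second step: a cyclic permutation of $f_u(\gamma)$ need not itself be $f_u(\gamma')$, since the cyclic cut may bisect an $f_u$-block, so verifying the common block grid rigorously (and handling the degenerate few-block subcases) requires careful, repeated use of Lemma~\ref{lemma:lineup}, analogous to the treatment in the proof of Lemma~\ref{smallcancel}.
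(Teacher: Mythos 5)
Your opening is the same as the paper's: apply Lemma~\ref{cyclesubword} to each factor to force $u\subseteq t$ and $v\subseteq t$ (the alternative inclusions being killed by the ``moreover'' clause because $f_{u'}$ introduces $y$-letters that $x^k$ lacks), and then pass to the $f_u$-block level via Lemma~\ref{lemma:lineup}, arriving at the block-level identity $x^l\cdot(\text{cyclic perm.\ of }f_{w''}(y^m))=\text{cyclic perm.\ of }f_{w'}(x^k)$. That much agrees with the paper.

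Where you diverge is in how you extract $w'=0$, $w''=e$ from this identity, and that is where the gap sits. You abelianize to get $k a_{w'}=l+m c_{w''}$ and $k b_{w'}=m d_{w''}$, cite the bound $|l|\leq M_{w'}$, and then claim the ``key observation'' $a_{w'}>M_{w'}$ for $w'\notin\{e,0\}$ ``leaves only $w'=0$''. But this does not follow: in the equation $k a_{w'}=l+m c_{w''}$, the term $m c_{w''}$ can absorb almost all of $k a_{w'}$, leaving $|l|$ small, so $a_{w'}>M_{w'}\geq|l|$ is entirely compatible with $w'\neq 0$ on abelianized data alone. To make the counting argument work you would need to bring in the divisibility constraint on $m$ coming from $k b_{w'}=m d_{w''}$ for \emph{general} $w'$ (you only do this for $w'=0$, where $b_0=1$ makes it clean) and then prove nontrivial inequalities relating $a_{w'}d_{w''}-b_{w'}c_{w''}$, $d_{w''}$, and $M_{w'}$ by induction over $2^{<\N}$. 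None of that is present, and it is not a small omission.

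The paper's proof avoids all of this arithmetic by applying the block-alignment idea \emph{a second time}, one level deeper: since $v'\subseteq t'$, the word $f_{t'}(x^k)$ is itself built from $f_{v'}$-blocks, and by Lemma~\ref{lemma:lineup} these must line up with the $f_{v'}$-blocks of the factor coming from $\beta$. This forces the prefix $x^l$ to be a union of complete $f_{v'}$-blocks; but every $f_{v'}$-block for $v'\neq e$ contains a $y$, so $v'=e$ immediately. After that, $x^l y^m$ being a cyclic permutation of $f_{t'}(x^k)$ pins down $t'=0$ and the exponents by inspection, with no counting lemmas. I would encourage you to replace your letter-counting step with this second application of Lemma~\ref{lemma:lineup}; it is both shorter and does not require any further auxiliary inductions.
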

\begin{proof}
By Lemma \ref{cyclesubword}, either $t\subset u$ or $u\subset t$.  If $t\subset u$ and $u=t^\frown u'$, then by the previous lemma, we find that a cyclic permutation of $x^k$ contains a cyclic permutation of $f_{u'}(x^l)$.  This is impossible unless $u'=e$.  So we may assume that $u\subset t$ and $t=u^\frown t'$.  Similarly we find that $v\subset t$ and $t=v^\frown t''$.  It follows that $u\subset v$ or $v\subset u$.

Suppose that $u\subset v$ and $v=u^\frown v'$.  We know that the $f_u$-blocks in $f_u(x^l)$ and $f_u(f_{v'}(y^m))$ must line up with those in $f_t(x^k)$.  This means in particular that a truncated $f_u$-block at the end of the cyclic permutation of $f_u(x^l)$ must be completed by a truncated $f_u$-block at the beginning of the cyclic permutation of $f_u(f_{v'}(y^m))$, and vice versa.  So we can assume that the cyclic permutations we are considering are made up of complete $f_u$-blocks.  Then by Lemma \ref{lemma:lineup} we obtain that a cyclic permutation of $f_{t'}(x^k)$ is a product of $x^l$ and a cyclic permutation of $f_{v'}(y^m)$.  Now, $f_{t'}(x^k)$ is composed of $f_{v'}$-blocks, which must line up with those in the cyclic permutation of $f_{v'}(y^m)$.  So it must be the case that $x^l$ is a cyclic permutation of $f_{v'}$-blocks.  But this can only happen if $v'=e$, meaning $u=v$.  Similar reasoning applies if $v\subset u$.  Thus $u=v$.

It is not possible for a truncated $f_u(x)$-block to be completed by a truncated $f_u(y)$-block, or vice versa, and so we must have that the cyclic permutation of $f_t(x^k)$ we started with is either $f_u(x^l)f_u(y^m)$ or $f_u(y^m)f_u(x^l)$.

It follows that $x^l y^m$ is a cyclic permutation of $f_{t'}(x^k)$.  This can only happen if $t'=0$ and $k,l,m$ are as in the statement of the lemma, since if $t'=e$ then $x^k$ contains no occurrences of $y$, and if $t'\neq 0$ is nontrivial then $f_{t'}(x^k)$ must contain at least two distinct blocks of $x$'s and $y$'s.
\end{proof}

We now take up the converse of Lemma \ref{homomorphism}, which will complete the proof of Theorem \ref{thm:FGGroupEmbedUniversal}.

\begin{lemma}\label{treegroupembed}
If $T,T'\in Tr(2)$ and $G_T\into G_{T'}$, then $\exists w\in 2^{<\N}$ such that $T=T'_w$.
\end{lemma}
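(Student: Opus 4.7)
Given an embedding $\theta : G_T \hookrightarrow G_{T'}$, the plan is to identify a $w \in 2^{<\N}$ such that $T = T'_w$, following the overall strategy of the proof of Theorem~\ref{thm:EmGroupsUniversal}. Since $x$ has finite order $n_x \in \{59, 67\}$ in $G_T$ and $\theta$ preserves order, Theorem~\ref{thm:torsion} and its subsequent remark give that $\theta(x)$ is conjugate in $G_{T'}$ to a power of some $f_w(x)$ (with $w \in T'$ if and only if $e \in T$, since the exponents of $x$-relators are $\{59, 67\}$). After composing $\theta$ with an inner automorphism of $G_{T'}$, I may assume $\theta(x) = f_w(x)^p$ with $p$ coprime to $n_x$. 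Similarly, $\theta(y) = h f_v(y)^q h^{-1}$ for some $v \in 2^{<\N}$, $h \in G_{T'}$, and $q$ coprime to the order of $y$.

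The key step is showing $v = w$ and that $h$ can be taken to be $1$. Using the centralizer $\langle f_w(x) \rangle$ (which fixes $\theta(x)$) and by absorbing any terminal power of $f_v(y)$ in $h$ into $f_v(y)^q$, I can normalize $h$ so that it does not begin with a power of $f_w(x)$ nor end with a power of $f_v(y)$. The mixing relation $(x^5 y)^{n_0} = (f_0(x))^{n_0} = 1$ in $G_T$ forces $\theta(f_0(x)) = f_w(x)^{5p} h f_v(y)^q h^{-1}$ to be a torsion element of order $n_0 \in \{59, 67\}$, and by Theorem~\ref{thm:torsion} its cyclically reduced form is a cyclic permutation of $f_{t_0}(x)^{k_0}$ for some $t_0 \in 2^{<\N}$ and some $k_0$ coprime to the order of $f_{t_0}(x)$. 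Mimicking the argument in Theorem~\ref{thm:EmGroupsUniversal}, the normalization makes $f_w(x)^{5p} h f_v(y)^q h^{-1}$ cyclically reduced, and a cyclic permutation of $f_{t_0}(x)^{k_0}$ is a sign-homogeneous word in the generators of $\F_2$, so the negative letters from $h^{-1}$ cannot survive cyclic reduction; this forces $h = 1$. Then Lemma~\ref{lemma:composed}, applied to $f_w(x)^{5p} f_v(y)^q$ as a cyclic permutation of $f_{t_0}(x)^{k_0}$, yields $v = w$, $t_0 = w^\frown 0$, $k_0 = q = \pm 1$, and $p = k_0 = q$.

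With the base case established, I would then induct on $|u|$ to show that for each $u \in 2^{<\N}$, the element $\theta(f_u(x))$ is conjugate in $G_{T'}$ to $f_{w^\frown u}(x)^{\pm 1}$ and $\theta(f_u(y))$ is conjugate to $f_{w^\frown u}(y)^{\pm 1}$, with consistent signs. The inductive step repeats the base analysis with $\theta(f_u(x))$ and $\theta(f_u(y))$ in place of $\theta(x)$ and $\theta(y)$, now using the relation $(f_u(x)^5 f_u(y))^{n_{u^\frown 0}} = 1$ (and its analogue through $f_1$ for $u^\frown 1$). Since $\theta$ preserves order, the order of $f_u(x)$ in $G_T$ equals the order of $f_{w^\frown u}(x)$ in $G_{T'}$, so $u \in T$ if and only if $w^\frown u \in T'$, i.e., $T = T'_w$.

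The main obstacle is verifying that, after normalization, the word $f_w(x)^{5p} h f_v(y)^q h^{-1}$ is indeed cyclically reduced in $\F_2$. Unlike the situation in Theorem~\ref{thm:EmGroupsUniversal}, where $t_0^k$ and $t_j^l$ involve only single generators, here $f_w(x)$ and $f_v(y)$ are multi-letter words, so one must rule out subtle letter-level cancellations at the $f_w$- and $f_v$-block boundaries, exploiting the uniqueness of the first and last letters of each $f_w$-block established at the start of the section. A secondary difficulty is the inductive step for $u^\frown 1$, where the relator $f_1(x) = x^2 y x y x$ is more complex than $f_0(x) = x^5 y$ and may require an additional combinatorial lemma analogous to Lemma~\ref{lemma:composed} or a careful reduction to it.
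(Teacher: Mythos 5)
Your overall strategy---forcing $\theta(x)$ to a power of $f_w(x)$ and $\theta(y)$ to a conjugate power of $f_v(y)$, then analyzing $\theta(f_0(x))$ via Theorem~\ref{thm:torsion} and Lemma~\ref{lemma:composed} to pin down $v=w$ and kill the conjugator---matches the paper's opening moves. However, there are two genuine gaps beyond the difficulties you flagged.

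First, you conclude ``$p=k_0=q=\pm 1$'' and proceed as if this finishes the base case, but you never dispose of the possibility that the common sign is $-1$, i.e.\ $\theta(x)=f_w(x)^{-1}$ and $\theta(y)=f_w(y)^{-1}$. This is not a trivial case: $\theta(f_0(x))=f_w(x)^{-5}f_w(y)^{-1}$ \emph{is} a conjugate of $f_{w^\frown 0}(x)^{-1}$, so the obstruction does not appear at depth one. It only appears at depth two: $\theta(f_{00}(x))=f_w(\alpha^{-1})$ where $\alpha=xy^6x^5yx^5yx^5yx^5yx^5$ is a cyclic rearrangement of the letter counts of $f_{00}(x)$ but \emph{not} a cyclic permutation of it (the $x^6$ and $y^6$ blocks sit in the wrong relative order), so by Theorem~\ref{thm:torsion} and Lemma~\ref{cyclesubword} this element has infinite order, contradicting that $\theta$ preserves the torsion of $f_{00}(x)$. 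The paper devotes a full paragraph to precisely this.

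Second, the proposed induction on $|u|$ is both unnecessary and broken. It is unnecessary because once you know $\theta(x)=f_w(x)$ and $\theta(y)=f_w(y)$, then $\theta=f_w$ as a map on $\F_2$, and $u\in T\Iff f_u(x)^{59}\in R_T\Iff f_{w^\frown u}(x)^{59}\in R_{T'}\Iff w^\frown u\in T'$ follows immediately from the construction of $R_T,R_{T'}$---no induction needed. It is broken because in the $-1$ case the invariant ``$\theta(f_u(x))$ conjugate to $f_{w^\frown u}(x)^{\pm1}$'' already fails at $u=00$ (see above), and even at $u$ of length one, $\theta(f_0(x))=f_w(x)^{-5}f_{w^\frown 0}(x)^{-1}f_w(x)^{5}$ while $\theta(f_0(y))=f_w(y)^{-5}f_{w^\frown 0}(y)^{-1}f_w(y)^{5}$ are conjugated by \emph{different} inner automorphisms, so you cannot simultaneously normalize both and ``repeat the base analysis'' the way you could when the conjugators were trivial. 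You must rule out the sign $-1$ outright, not hope it threads through an induction. Finally, your dismissal of the conjugator $h$ (``the negative letters from $h^{-1}$ cannot survive cyclic reduction, so $h=1$'') skips the case analysis the paper needs: when $\delta'$ and $\gamma$ have opposite signs, or when the leftover conjugator cancels into the beginning or end of the $f_w(x)$-power, the word $(f_w(x))^{\delta'}u'r{u'}^{-1}$ is not cyclically reduced as written and additional inner automorphisms must be applied and tracked, which is where most of the work in the paper's proof actually lies.
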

\begin{proof}
Suppose that $\theta\from G_T\to G_{T'}$ is a monomorphism.  Our main goal is to prove that $\theta$ must actually be $f_w$ for some $w\in 2^{<\N}$, up to an inner automorphism of $G_{T'}$.  Once we know this, it is easy to recover the relations in each group, and thus to show that $T=T'_w$.

Since $x=f_e(x)$, $x$ has some finite order $n_x$ in $G_T$.  Then $\theta(x)$ has order $n_x$, and so by Theorem \ref{thm:torsion}, $\theta(x)$ must be conjugate to a power of some $f_w(x)$, where $w\in T' \Iff e\in T$.  If we follow $\theta$ by an inner automorphism of $G_{T'}$, we may assume that $\theta(x)=(f_w(x))^\delta$ for some nonzero integer $\delta$.

Similarly, $\theta(y)$ is conjugate to a power of some $f_v(y)$ with $v\in T' \Iff e\in T$.  We find that $\theta(y)=u (f_v(y))^\gamma u^{-1}$.  We can assume that $u$ does not contain more than half of an element of $R_{T'}$.  We may also follow $\theta$ by the inner automorphism corresponding to $f_w(x)$ as necessary to ensure that $u$ does not begin with a power of $f_w(x)$, and this will not change the value of $\theta(x)$.  After freely reducing we get that $\theta(y)=u'r{u'}^{-1}$, where $r$ is a cyclic permutation of $(f_v(y))^\gamma$.  To see this, suppose that $u=\alpha\beta^{-1}$, where $(f_v(y))^\gamma=\beta\mu$ and $\beta$ is the longest subword of $u$ for which this is true.  Then $u(f_v(y))^\gamma u^{-1} = \alpha\beta^{-1}\beta\mu\beta\alpha^{-1} = \alpha\mu\beta\alpha^{-1}$.  Then $u'=\alpha$ and $r=\mu\beta$.  A similar argument works if $(f_v(y))^\gamma$ cancels with $u^{-1}$.  For example, if we had $\theta(y)=xy^{-1} f_0(y) yx^{-1}=xy^{-1} (y^5 x) yx^{-1}$, then after freely reducing we would find $\theta(y)=x y^4 x y x^{-1}=x r x^{-1}$, where $r=y^4xy$.

We now proceed much as in the proof of \ref{thm:EmGroupsUniversal}.  Let $m_x$ be the order of $f_0(x)$ in $G_T$.  Since $\theta$ is an embedding, it must take $f_0(x)$ to a torsion element of order $m_x$.  We know that
\begin{align*}
\theta(f_0(x)) &= \theta(x^5y)=(f_w(x))^{5\delta} u' r {u'}^{-1} \\
 &= (f_w(x))^{\delta'} u' r {u'}^{-1}
\end{align*}
where $|\delta'|< \lfloor \frac{n_x}{2} \rfloor$.  Note that, as written, this word may not be freely reduced, so we can not necessarily use Theorem \ref{thm:wordproblem} yet.  Let $z=\theta(f_0(x))$.

Suppose that $z$ is cyclically reduced as written and that $u'\neq 1$.  Then $z$ is a cyclically reduced word with finite order in $G_{T'}$ which contains a mixture of positive and negative letters, which is impossible by Theorem \ref{thm:torsion}, since the words in $R_{T'}$ are either entirely positive or entirely negative.  So either $u'=1$ or else $z$ is not cyclically reduced.

First suppose that $u'=1$, so that $z=(f_w(x))^{\delta'} r$.  If $\delta'$ and $\gamma$ have the same sign, then $z$ is cyclically reduced as written.  By Theorem \ref{thm:torsion}, $z$ is a cyclic permutation of some $f_t(x^k)$, and so by Lemma \ref{lemma:composed}, $\theta(x)=f_w(x)^{\pm 1}$ and $\theta(y)=f_w(y)^{\pm 1}$.

Suppose that $\delta'$ and $\gamma$ have opposite signs.  There is a (possibly trivial) inner automorphism $\psi$ such that $\psi(z)=sr'$, where $s$ is a cyclic permutation of $f_w(x)^{\delta'}$, and $r'$ is a cyclic permutation of $f_v(y)^\gamma$, and freely reducing $sr'$ will leave a cyclically reduced word.  Let $\psi(z)=z'$ with $z'$ the result of freely reducin $sr'$.  Since $z'$ is a torsion element, it must be a cyclic permutation of some $f_t(x^k)$, and so its letters must all have the same sign.

Suppose $z'$ and $s$ have letters of the same sign.  Then $z'{r'}^{-1}$ is cyclically reduced and so we have a cyclic permutation of $f_w(x)^{\delta'}$ written as a product of a cyclic permutation of $f_t(x)^k$ and a cyclic permutation of $f_v(y)^\gamma$.  By Lemma \ref{lemma:composed}, we get that $\theta(x)=f_{v^\frown 0}(x^{\pm 1})=f_v((x^5y)^{\pm 1})$, and $\theta(y)=f_v(y^{\mp 1})$.  But then either $\theta(xy)=f_v(x^5)$ or $\theta(xy)=f_v(y^{-1}x^{-5}y)=f_v(y)^{-1}f_v(x^{-5})f_v(y)$.  Both of these are torsion elements, but $xy$ is not a torsion element in $G_T$, which contradicts the fact that $\theta$ is an embedding.  So suppose that $z'$ and $r'$ have letters of the same sign.  Then $z's^{-1}$ is cyclically reduced and so we have a cyclic permutation of $f_v(y)^\gamma$ written as a product of a cyclic permutation of $f_t(x)^k$ and a cyclic permutation of $f_w(x)^{\delta'}$.  Arguing as in the proof of Lemma \ref{lemma:composed}, we find that $w=t$ and that $w\subset v$.  Let $v=w^\frown v'$.  We obtain that $f_{v'}(y^\gamma)=x^{-\delta'+k}$, which is absurd.

We still must address the case where $u'\neq 1$ and $z$ is not cyclically reduced.  This can happen for two reasons.  It may be that $u'$ and $f_w(x)$ begin in the same way.  We know that $u'$ does not begin with an entire copy of $f_w(x)$, and we have assumed that $u'$ does not further cancel with $r$, so there is an inner automorphism $\psi$ such that $\psi(\theta(f_0(x)))=su''r{u''}^{-1}$, a cyclically reduced word with $s$ a cyclic permutation of $(f_w(x))^{\delta'}$.  If $u''\neq 1$, then $\psi(\theta(f_0(x))$ contains positive and negative letters, and we have already seen that this is impossible.  Thus we must have that $\psi(\theta(f_0(x)))=sr$, and as before we see that $\psi(\theta(x))=f_w(x)^{\pm 1}$ and $\psi(\theta(y))=f_w(y)^{\pm 1}$.

The other possibility is that $u'$ cancels with the end of $f_w(x)$.  It cannot cancel with the whole of $f_w(x)$, and so again after following $\theta$ by an appropriate inner automorphism $\psi$ we get that $\psi(\theta(f_0(x)))=su''r{u''}^{-1}$, a cyclically reduced word with $s$ a cyclic permutation of $(f_w(x))^{\delta'}$.  This case is treated exactly as in the previous paragraph.

So we have shown that, up to an inner automorphism, $\theta(x)=f_w(x)^{\pm 1}$ and $\theta(y)=f_w(y)^{\pm 1}$ with the signs matching.  If $\theta(x)=f_w(x)$ and $\theta(y)=f_w(y)$, then $\theta=f_w$, and hence
\begin{align*}
u\in T &\Leftrightarrow f_u(x^{53})\in R_T \\
 &\Leftrightarrow f_w(f_u(x^{53}))\in R_{T'} \\
 &\Leftrightarrow w^\frown u\in T'
\end{align*}

Thus $T=T'_w$, as desired.  Thus it only remains is to eliminate the undesirable case when $\theta(x)=f_w(x^{-1})$ and $\theta(y)=f_w(y^{-1})$.  In this case we have that

\begin{align*}
\theta(f_{00}(x)) &=\theta(x^5yx^5yx^5yx^5yx^5yy^5x) \\
 &=f_w(x^{-5}y^{-1}x^{-5}y^{-1}x^{-5}y^{-1}x^{-5}y^{-1}x^{-5}y^{-1}y^{-5}x^{-1}) \\
 &=f_w((xy^5yx^5yx^5yx^5yx^5yx^5)^{-1})
\end{align*}
We will show this is not a torsion element in $G_{T'}$.  Since $f_{00}(x)$ is a torsion element in $G_T$, this implies that $\theta$ is not a homomorphism, which is a contradiction.

Let $\alpha=xy^5yx^5yx^5yx^5yx^5yx^5$.  It is easy to see that $f_{00}(x)$ is the only torsion element that has length 36 and that contains 26 $x$s.  However, $\alpha$ is not a cyclic permutation of $f_{00}(x)$.  It follows that $f_w(\alpha)$ (and hence $\theta(f_{00}(x))$) is not a torsion element, since otherwise it would have to be a cyclic permutation of some $f_t(x^k)$ with $t\in T'$.  Thus by Lemma \ref{cyclesubword}, either $w\subset t$ or $t\subset w$.  Suppose that $w\subset t$ and $t=w^\frown t'$.  Then $\alpha$ must be a cyclic permutation of $f_{t'}(x^k)$, which we have already seen is impossible.  If $t\subset w$ and $w=t^\frown w'$, then $x^k$ must be a cyclic permutation of $f_{w'}(\alpha)$, which is also impossible.  Thus we reach a contradiction, eliminating the final undesirable case.
\end{proof}

\begin{proof}[Proof of Theorem \ref{thm:FGGroupEmbedUniversal}]
Lemmas \ref{homomorphism} and \ref{treegroupembed} establish that the map $T\mapsto G_T$ is a Borel reduction from $\qlt_2^{tree}$ to $\qlt_{em}$.
\end{proof}

\begin{cor}
$\equiv_{em}$ is a universal countable Borel equivalence relation.
\end{cor}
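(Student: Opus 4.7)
The plan is to derive this corollary directly from Theorem \ref{thm:FGGroupEmbedUniversal} and the general principle, mentioned in the introduction, that symmetrizing a universal countable Borel quasi-order yields a universal countable Borel equivalence relation. First I would note that $\equiv_{em}$ really is a countable Borel equivalence relation: it is Borel as the intersection of $\qlt_{em}$ with its converse, and each $\equiv_{em}$-class is contained in the set of $\qlt_{em}$-predecessors of any of its members, which is countable by the defining property of a countable Borel quasi-order.

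Next I would let $E$ be an arbitrary countable Borel equivalence relation on a standard Borel space $X$. Since every equivalence relation is in particular a countable Borel quasi-order, Theorem \ref{thm:FGGroupEmbedUniversal} supplies a Borel map $f \from X \to \mc G$ satisfying
$$ x \mathbin{E} y \Iff f(x) \qlt_{em} f(y). $$
Using the symmetry of $E$, from $x \mathbin{E} y$ one also gets $y \mathbin{E} x$, hence $f(y) \qlt_{em} f(x)$; together these give $f(x) \equiv_{em} f(y)$. Conversely, if $f(x) \equiv_{em} f(y)$ then $f(x) \qlt_{em} f(y)$, which already forces $x \mathbin{E} y$ by the reduction property. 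Thus $f$ is also a Borel reduction of $E$ into $\equiv_{em}$.

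Since $E$ was arbitrary, this shows that every countable Borel equivalence relation Borel reduces to $\equiv_{em}$, so $\equiv_{em}$ is universal. There is no real obstacle to this argument once Theorem \ref{thm:FGGroupEmbedUniversal} is in hand; all the work has been done in the construction of $T \mapsto G_T$ and in Lemmas \ref{homomorphism} and \ref{treegroupembed}.
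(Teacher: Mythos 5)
Your argument is correct and is exactly the "easily checked" fact the paper alludes to in the introduction (that $E_Q$ is universal whenever $Q$ is a universal countable Borel quasi-order), applied to $Q = \qlt_{em}$; the paper leaves this corollary unproved precisely because it is this routine. Nothing further is needed.
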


It may have been possible to prove this corollary without any reference to quasi-orders, by reducing some known universal countable Borel equivalence relation to $\equiv_{em}$.  However, it seems that the most natural and direct route to this result is through Theorem \ref{thm:FGGroupEmbedUniversal}.  A closer look at the above proof also leads to the following result, which tells us that the bi-embeddability relation on the groups constructed above is much more complicated than the isomorphism relation on these groups.

\begin{cor}
With the above notation, $G_T\cong G_S \Iff T=S$.
\end{cor}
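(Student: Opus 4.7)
The forward direction is immediate: if $T=S$ then $G_T=G_S$ by definition. For the converse, suppose $\theta\from G_T\to G_S$ is an isomorphism. The plan is to extract $f_w$ from $\theta$ and $f_v$ from $\theta^{-1}$ via Lemma \ref{treegroupembed}, combine them into an automorphism $f_{v^\frown w}$ of $G_T$, and then show that any such automorphism must have $v^\frown w=e$.

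By Lemma \ref{treegroupembed} applied to $\theta$, there is $w\in 2^{<\N}$ with $T=S_w$; inspecting its proof, there is an inner automorphism $\psi$ of $G_S$ with $\psi\circ\theta=f_w$, so $f_w\from G_T\to G_S$ is itself an isomorphism. Applying the same reasoning to $\theta^{-1}$ gives $v\in 2^{<\N}$ with $S=T_v$ and $f_v\from G_S\to G_T$ an isomorphism. Composing, $f_{v^\frown w}=f_v\circ f_w$ is an automorphism of $G_T$, so it suffices to show that any automorphism of $G_T$ of the form $f_u$ satisfies $u=e$.

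Suppose then that $f_u$ is an automorphism of $G_T$. Surjectivity gives $x=f_u(\gamma)$ in $G_T$ for some $\gamma$, and $\gamma$ has the same order $n_x\in\{59,67\}$ as $x$. Theorem \ref{thm:torsion} applied in $G_T$ shows that $\gamma$ is conjugate to $(f_s(x))^k$ for some $s\in 2^{<\N}$ and some $k$ with $\gcd(k,n_x)=1$, whence $x=f_u(\gamma)$ is conjugate in $G_T$ to $(f_{u^\frown s}(x))^k$.

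The main obstacle is to deduce $u^\frown s=e$ from this conjugacy. This amounts to a base-uniqueness statement in the $C'(1/8)$ group $G_T$: the conjugacy class of a cyclically reduced torsion element of the form $(f_t(x))^j$ determines its base $f_t(x)$ up to cyclic permutation in $\F_2$. Granted this, the bases $x=f_e(x)$ and $f_{u^\frown s}(x)$ must be cyclic permutations of each other in $\F_2$, and comparing lengths ($1$ versus $6^{|u^\frown s|}$) forces $u^\frown s=e$, so $u=s=e$ and in particular $v^\frown w=e$, yielding $T=S$. The base-uniqueness I would prove by taking a minimal-length conjugator $g$ realizing $gxg^{-1}=(f_{u^\frown s}(x))^k$ (normalizing $|k|\leq n_{u^\frown s}/2$) and applying Theorem \ref{thm:wordproblem} to $gxg^{-1}(f_{u^\frown s}(x))^{-k}$; the resulting more-than-half overlap with a relator $r\in R_T$ can then be analyzed via Lemmas \ref{lemma:lineup} and \ref{cyclesubword} together with the overlap estimates underlying Lemma \ref{smallcancel}, forcing $r$ to have base $x$ and hence, by minimality of $g$, forcing $u^\frown s=e$.
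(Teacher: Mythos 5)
Your reduction to the claim that any automorphism of $G_T$ of the form $f_u$ must have $u=e$ is sound: applying Lemma \ref{treegroupembed} to $\theta$ and to $\theta^{-1}$, composing to get the automorphism $f_{v^\frown w}$ of $G_T$, and observing that $v^\frown w=e$ forces $w=e$ and $T=S$, all checks out and is a clean reorganization of the paper's plan (the paper works with $f_w\from G_T\to G_S$ directly and shows it cannot be surjective for $w\neq e$). The divergence is in how you prove $u=e$, and that step has a real gap.

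The paper's argument is short and avoids conjugacy entirely. Assuming $f_w$ is onto with $w\neq e$, choose $\alpha\in\F_2$ with $f_w(\alpha)=x$ in $G_S$ and with $\alpha$ containing at most half of any relator of $R_T$; the argument of Lemma \ref{homomorphism} then shows $f_w(\alpha)$ contains at most half of any relator of $R_S$. Since $f_w(\alpha)x^{-1}=1$ in $G_S$, Theorem \ref{thm:wordproblem} with $\lambda=1/8$ forces this word to contain more than $5/8$ of some relator, yet multiplying by $x^{-1}$ can increase the overlap by at most one letter — a contradiction because the relators are long. Your route instead applies Theorem \ref{thm:torsion} to the preimage $\gamma$ of $x$ and lands on the claim that $x$ cannot be conjugate in $G_T$ to $(f_t(x))^k$ with $t\neq e$. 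That claim is plausible, but it does not follow from Theorem \ref{thm:torsion}, which only says a torsion element is conjugate to a power of \emph{some} relator period and does not assert that distinct periods $f_{t_1}(x)$, $f_{t_2}(x)$ yield non-conjugate torsion elements. Your sketch of a minimal-conjugator argument would need a genuine conjugacy theorem for $C'(1/8)$ presentations (Lyndon--Schupp Chapter V, beyond what this paper cites), and Lemmas \ref{lemma:lineup} and \ref{cyclesubword} do not apply as stated because the conjugator $g$ is an arbitrary element of $G_T$, not an $f$-image, so they give no control over the overlaps in $gxg^{-1}(f_t(x))^{-k}$. As written, the base-uniqueness step is a placeholder for a substantial missing argument; if you insert the paper's surjectivity argument at that point instead, your overall structure goes through.
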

\begin{proof}
One direction is trivial.  For the other, suppose that $T,S\in Tr(2)$ are such that $G_T\cong G_S$, via $\phi\from G_T\to G_S$.  Then in particular $\phi$ is an embedding, and by the proof of Lemma \ref{treegroupembed}, there exists $w\in 2^{<\N}$ such that $T=S_w$.  Furthermore, after adjusting by an inner automorphism of $G_S$ if necessary, we can suppose that $\phi=f_w$.  We will show that if $w\neq e$, then $f_w\from G_T\to G_S$ is not surjective.  Hence $w=e$ and $G_T=G_S$.

Suppose that $w\in 2^{<\N}$ is nontrivial and that $f_w\from G_T\to G_S$ is surjective.  Then there is some word $\alpha\in \F_2$, which we may assume does not contain more than $1/2$ of a relation in $R_T$, such that $f_w(\alpha)=x$ in $G_S$, where $x$ is one of the generators of $G_S$.  This means that $f_w(\alpha)x^{-1}=1$ in $G_S$.  By the proof of Lemma \ref{homomorphism}, we know that $f_w(\alpha)$ does not contain more than $1/2$ of a relation in $G_S$.  By Theorem \ref{thm:wordproblem}, for $f_w(\alpha)x^{-1}$ to represent the identity in $G_S$, it must contain more than $(1-3/8)=5/8$ of a relation in $R_T$.  But $f_w(\alpha)x^{-1}$ has at most one more letter in common with a relation than $f_w(\alpha)$ does.  Since $f_w(\alpha)$ contains less than $1/2$ of a relation in $R_T$, $f_w(\alpha)x^{-1}$ contains less than $5/8$ of a relation in $R_T$.  This is a contradiction.
\end{proof}

\section{Quasi-orders that symmetrize to universal countable Borel equivalence relations}\label{sec:Final}

We have seen that symmetrizing universal countable Borel quasi-orders produces universal countable Borel equivalence relations.  It would be desirable to be able to strengthen this and say that symmetrizing a non-universal countable Borel quasi-order produces a non-universal countable Borel equivalence relation.  However, it is easy to see that this is not the case.  Let $E$ be a universal countable Borel equivalence relation.  Then $E$ is not universal as a quasi-order, as symmetry is preserved downwards under $\leq_B$, but obviously $E$ symmetrizes to a universal equivalence relation, i.e.\ $E$ itself.

If this were the extent of the phenomenon, then it would still be possible to use negative results about the universality of a given countable Borel quasi-order in order to prove negative results about the universality of its associated equivalence relation, so long as the quasi-order was asymmetric somewhere.  Unfortunately, things are as bad as they could be.

\begin{theorem}\label{thm:Final}
There are $2^{\aleph_0}$ countable Borel quasi-orders $Q$, distinct up to Borel bireducibility, for which $E_Q$ is a universal countable Borel equivalence relation.
\end{theorem}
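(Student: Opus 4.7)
The plan is to attach a universal countable Borel equivalence relation to continuum many pairwise non-bireducible countable Borel equivalence relations, producing quasi-orders that share a universal symmetrization but whose asymmetric parts encode enough information to distinguish them under $\leq_B$.

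First, by the theorem of Adams and Kechris, I would fix a family $\{F_\alpha : \alpha < 2^{\aleph_0}\}$ of pairwise $\leq_B$-incomparable countable Borel equivalence relations, all defined on a single standard Borel space $Y$. I would also fix a universal countable Borel equivalence relation $E$ on a standard Borel space $X$, and for each $\alpha$ choose a Borel reduction $\phi_\alpha \from Y\to X$ of $F_\alpha$ to $E$, which exists by universality of $E$.

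For each $\alpha$, I would define a countable Borel quasi-order $Q_\alpha$ on the disjoint union $X\sqcup Y$ by stipulating that $Q_\alpha$ agrees with $E$ on $X\times X$ and with $F_\alpha$ on $Y\times Y$, that $y\, Q_\alpha\, x$ holds iff $\phi_\alpha(y)\, E\, x$ for $y\in Y$ and $x\in X$, and that $x\, Q_\alpha\, y$ never holds for $x\in X$ and $y\in Y$. Reflexivity is immediate; transitivity ``across the union'' follows because $\phi_\alpha$ is a reduction. The predecessor set of a point in $X$ is the union of an $E$-class with $\phi_\alpha^{-1}$ of an $E$-class, both countable, and the predecessor set of a point in $Y$ is an $F_\alpha$-class. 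The symmetrization $E_{Q_\alpha}$ decomposes as the disjoint union $E\sqcup F_\alpha$, because the only cross pairs permitted by $Q_\alpha$ are strictly asymmetric; since $F_\alpha\leq_B E$, this disjoint union is Borel bireducible with $E$ and hence universal.

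The heart of the argument is to show that any Borel reduction $f\from X\sqcup Y\to X\sqcup Y$ of $Q_\alpha$ to $Q_\beta$ must satisfy $f(Y)\subseteq Y$. For $y\in Y$, the pair $(y,\phi_\alpha(y))$ witnesses a $Q_\alpha$-asymmetry, so $(f(y),f(\phi_\alpha(y)))$ witnesses a $Q_\beta$-asymmetry. A case analysis on the four possible locations of $f(y)$ and $f(\phi_\alpha(y))$ rules out every configuration except $f(y)\in Y$ and $f(\phi_\alpha(y))\in X$: both coordinates in $X$ or both in $Y$ would yield a symmetric $Q_\beta$-pair, while $f(y)\in X$ and $f(\phi_\alpha(y))\in Y$ would require an $X\to Y$ relation in $Q_\beta$, which never holds. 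Once $f(Y)\subseteq Y$ is established, $f\restriction Y$ is a Borel map $Y\to Y$ realizing a Borel reduction of $Q_\alpha\restriction (Y\times Y) = F_\alpha$ to $Q_\beta\restriction (Y\times Y) = F_\beta$. Hence $Q_\alpha\leq_B Q_\beta$ implies $F_\alpha\leq_B F_\beta$, and the pairwise $\leq_B$-incomparability of the $F_\alpha$ transfers to pairwise non-bireducibility of the $Q_\alpha$.

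The main obstacle is precisely the case analysis that pins $f(Y)$ inside $Y$: the universal symmetric $E$-part would otherwise give enough flexibility to absorb $F_\alpha$ into $E$ via the other side of the reduction. The construction is deliberately arranged so that every $Q_\gamma$-asymmetric pair is a ``one-way bridge'' from $Y$ into $X$, making the $Y$-part a Borel-definable invariant of $Q_\gamma$ that any reduction is forced to respect.
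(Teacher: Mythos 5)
Your argument is correct, but it takes a genuinely different route from the paper's. The paper intersects each Adams--Kechris equivalence relation $E$ with a Borel linear order to get a completely asymmetric quasi-order $E(\leq) = E \cap \leq$, then sets $Q_E = E(\leq) \vee E_\infty$; the asymmetric component $E(\leq)$ carries all the distinguishing information, and the observation that every $E$-class has size at least two forces any reduction to keep the $X_E$-part inside the target's $X_F$-part. The symmetrization $\Delta(X_E) \vee E_\infty$ is trivially universal. You instead build $Q_\alpha$ on $X \sqcup Y$ as a one-way ``bridge'' from a copy of $F_\alpha$ into a universal $E$, routed through a chosen reduction $\phi_\alpha$. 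The asymmetric pairs are exactly the cross-pairs $Y \to X$; every $y \in Y$ participates in one (via $(y,\phi_\alpha(y))$), so no cardinality hypothesis on classes is needed, and the case analysis pins $f(Y)\subseteq Y$ for any reduction $f$ of $Q_\alpha$ to $Q_\beta$, whence $F_\alpha \leq_B F_\beta$. The paper's construction is more self-contained (no auxiliary reductions $\phi_\alpha$ needed) and slightly shorter, while yours isolates the ``asymmetric part as Borel invariant'' idea more explicitly and dispenses with the class-size condition. One minor point: your justification that $E_{Q_\alpha} = E \sqcup F_\alpha$ is universal should appeal directly to the universality of $E$ (any countable Borel equivalence relation, including $E \sqcup F_\alpha$, reduces to $E$), rather than to $F_\alpha \leq_B E$, since the map $\mathrm{id}_X \sqcup \phi_\alpha$ is not by itself a reduction of $E \sqcup F_\alpha$ to $E$; but this is an easy fix and does not affect the argument.
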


\begin{proof}
Recall that Adams and Kechris showed in \cite{AK00} that there are $2^{\aleph_0}$ countable Borel equivalence relations up to Borel bireducibility.  The equivalence relations they produced are all defined on different spaces; we will write $X_E$ for the space on which $E$ is defined.  Further, these equivalence relations have all equivalence classes of size at least 2, a minor technical point that will be used later.

Given a countable Borel equivalence relation $E$ on a standard Borel space $X$ with a Borel linear order $\leq$, define $E(\leq)$ to be $E\hspace{3pt}\cap \leq$.  We see that $E(\leq)$ is an asymmetric countable Borel quasi-order, unless $E$ is just equality on $X$, written $\Delta(X)$.  Note that
$$ E(\leq) \leq_B F(\leq) \;\;\Longrightarrow\;\; E \leq_B F $$
and so if $E(\leq)$ and $F(\leq)$ are Borel bireducible, then so are $E$ and $F$.  Thus by the result of Adams and Kechris we find that there are $2^{\aleph_0}$ quasi-orders of the form $E(\leq)$ up to Borel bireducibility.

Now we define the family of quasi-orders $\{ E(\leq)\vee E_\infty\}$, where $E$ ranges over the equivalence relations from the proof of Adams and Kechris and $\vee$ denotes the disjoint union of two equivalence relations.  Now suppose that
$$ E(\leq)\vee E_\infty \,\, \leq_B \,\, F(\leq)\vee E_\infty. $$
Then $E(\leq)\leq_B F(\leq)$, since $E(\leq)$ is completely asymmetric, and so any two $E(\leq)$-comparable elements in $X_E$ must map to $X_F$.  Since the equivalence classes of $E$ are all of size at least 2, every element of $X_E$ is $E(\leq)$-comparable with some other element of $X_E$, and so every element of $X_E$ is mapped to $X_F$.

Thus there are $2^{\aleph_0}$ countable Borel quasi-orders of the form $E(\leq)\vee E_\infty$.  Each of these symmetrizes to $\Delta(X_E)\vee E_\infty$, which is universal.
\end{proof}

Most of the reducibilities from computability theory, such as Turing reducibility $\leq_T$ or 1-reducibility $\leq_1$, are countable Borel quasi-orders.  The equivalence relations associated with them have been the subject of a great deal of work in descriptive set theory; for example, see \cite{DK99}.  It remains open whether $\equiv_T$ or $\equiv_1$ are universal countable Borel equivalence relations.  Thus the following question naturally arises.

\begin{problem}
Are any of $\leq_1,\leq_T$, etc.\ universal countable Borel quasi-orders?
\end{problem}

In light of the previous theorem, showing that $\leq_T$ or $\leq_1$ are not universal countable Borel quasi-orders would not settle the question of whether $\equiv_T$ or $\equiv_1$ are universal, although it may be taken as ``evidence" that the answer is no.

\section{Acknowledgements}
The results in this paper (except for Theorem \ref{thm:Final}) are from my thesis, which was done under the direction of Simon Thomas, whose help was invaluable.  I would also like to thank Arthur Apter, Justin Bush, Gregory Cherlin, David Duncan, and Charles Weibel for many helpful mathematical discussions.

\bibliographystyle{plain}
\bibliography{bibliography}{}

\end{document}